\setlist{font=\normalfont,leftmargin=*}
\crefname{equation}{Eq.}{Eqs.}
\crefname{thm}{Theorem}{Theorems} 
\crefname{lem}{Lemma}{Lemmata}
\crefname{cor}{Corollary}{Corollaries}
\crefname{prop}{Proposition}{Propositions}
\crefname{defn}{Definition}{Definitions}
\crefname{rem}{Remark}{Remarks}
\crefname{exmp}{Example}{Examples}
\crefname{notation}{Notation}{Notations}
\crefname{setup}{Setup}{Setup}
\crefname{ques}{Question}{Question}
\crefname{quest}{Question}{Question}
\newtheorem{thm}{Theorem}[section]
\newtheorem{lem}[thm]{Lemma}
\newtheorem{cor}[thm]{Corollary}
\newtheorem{prop}[thm]{Proposition}
\newtheorem*{pple}{The Principal Principle}
\newtheorem{thmintro}{Theorem}
\theoremstyle{definition}
\newtheorem{defn}[thm]{Definition}
\newtheorem{exmp}[thm]{Example}
\newtheorem{setup}[thm]{Setup}
\newtheorem{quest}[thm]{Question}
\newtheorem{ques}{Question}
\newtheorem*{exmp-intro}{Example}
\theoremstyle{remark}
\newtheorem{rem}[thm]{Remark}
\newtheorem*{notation}{Notation}
\numberwithin{equation}{section}
\DeclareMathOperator{\lct}{lct}
\DeclareMathOperator{\fpt}{fpt}
\DeclareMathOperator{\ft}{c}
\DeclareMathOperator{\crit}{crit}
\DeclareMathOperator{\lce}{lce}
\newcommand{\ftb}{\ft^{\idealb}}
\newcommand{\critb}{\crit\!^\idealb}
\newcommand{\ftab}{\ft^{\idealb}(\ideala)}
\newcommand{\critab}{\crit^{\idealb}(\ideala)}
\newcommand{\critabp}{\crit^{\idealb_p}(\ideala_p)}
\newcommand{\ftabp}{\ft^{\idealb_p}(\ideala_p)}
\newcommand{\nuab}{\nu_{\ideala}^{\idealb}}
\newcommand{\muab}{\mu_{\ideala}^{\idealb}}
\newcommand{\up}[1]{\left\lceil #1 \right\rceil}
\newcommand{\down}[1]{\left\lfloor #1 \right\rfloor}
\newcommand{\fr}[1]{\left\{ #1 \right\}}
\newcommand{\seq}[1]{\left( #1 \right)}
\newcommand{\norm}[1]{\left\|{#1}\right\|}
\DeclarePairedDelimiter{\ideal}{\langle}{\rangle}
\newcommand{\idealm}{\mathfrak{m}}
\newcommand{\idealp}{\mathfrak{p}}
\newcommand{\ideala}{\mathfrak{a}}
\newcommand{\idealb}{\mathfrak{b}}
\newcommand{\idealB}{\mathfrak{B}}
\newcommand{\idealc}{\mathfrak{c}}
\newcommand{\frob}[2]{#1^{[#2]}}
\newcommand{\idealfct}{\mathfrak{J}}
\newcommand{\vv}[1]{\mathbf{{#1}}}
\newcommand{\vvv}[1]{\boldsymbol{{#1}}}
\newcommand{\vx}{x}
\newcommand{\vz}{z}
\newcommand{\vw}{w}
\newcommand{\kk}{\Bbbk}
\newcommand{\LL}{\mathbb{L}}
\newcommand{\FF}{\mathbb{F}}
\newcommand{\RR}{\mathbb{R}}
\newcommand{\RRpos}{\RR_{>0}}
\newcommand{\RRnn}{\RR_{\ge0}}
\newcommand{\CC}{\mathbb{C}}
\newcommand{\ZZ}{\mathbb{Z}}
\newcommand{\QQ}{\mathbb{Q}}
\newcommand{\QQpos}{\QQ_{>0}}
\newcommand{\QQnn}{\QQ_{\ge0}}
\newcommand{\NN}{\mathbb{N}}
\newcommand{\NNpos}{\mathbb{N}_{>0}}
\newcommand{\infint}{(\QQnn)_{p^\infty}}
\DeclareMathOperator{\Crit}{Crit}
\newcommand{\cf}{\emph{cf}.}
\newcommand{\eg}{e.g., }
\newcommand{\ie}{i.e., }
\newcommand{\iref}{\ref}
\DeclareMathOperator{\supp}{supp}
\renewcommand{\geq}{\geqslant}
\renewcommand{\leq}{\leqslant}
\renewcommand{\ge}{\geqslant}
\renewcommand{\le}{\leqslant}
\begin{document}
 
\title{Frobenius Powers}
\author[*]{Daniel J.~Hern\'andez}
\author[**]{Pedro Teixeira}
\author[*]{Emily E.\ Witt}
\renewcommand\Affilfont{\footnotesize}
\affil[*]{\textsl{Department of Mathematics, University of Kansas, Lawrence, KS~66045, USA}
}
\affil[**]{\textsl{Department of Mathematics, Knox College, Galesburg, IL~61401, USA}
}
\date{}
\maketitle

\begin{abstract}
This article extends the notion of a \emph{Frobenius power} of an ideal in prime characteristic to allow arbitrary nonnegative real exponents.  
These generalized Frobenius powers are closely related to test ideals in prime characteristic, and multiplier ideals over fields of characteristic zero.
For instance, like these well-known families of ideals, Frobenius powers also give rise to jumping exponents that we call \emph{critical Frobenius exponents}. 
In fact, the Frobenius powers of a principal ideal coincide with its test ideals, but Frobenius powers appear to be a more refined measure of singularities than test ideals in general. 
Herein, we develop the theory of Frobenius powers in regular domains, and apply it to study singularities, especially those of generic hypersurfaces.
These applications illustrate one way in which multiplier ideals behave more like Frobenius powers than like test ideals. 
\end{abstract}

\section{Introduction}

This article concerns the singularities of algebraic varieties, especially the relationship between the singularities of hypersurfaces and those of more general varieties.  Though our main interest is in the prime characteristic setting, we are motivated by the following well-known result from birational algebraic geometry over the complex numbers:  Let $\ideala$ be an ideal of a polynomial ring over $\CC$.  If $f \in \ideala$ is a general $\CC$-linear combination of some fixed generators of $\ideala$, then \[ \mathcal{J}(f^t) = \mathcal{J}(\ideala^t)\] for each parameter $t$ in the open unit interval \cite[Proposition~9.2.28]{lazarsfeld.positivity-II}.
This equality of \emph{multiplier ideals} immediately implies that the 
\emph{log canonical threshold} of such an $f \in \ideala$ equals the minimum of $1$ and the log canonical threshold $\lct(\ideala)$ of $\ideala$.  

An important special case is when $\ideala$ is the term ideal of $f$.  
In this case, the condition that $f$ is general can be expressed concretely: it suffices to take $f$ nondegenerate with respect to the Newton polyhedron of $\ideala$ \cite[Corollary~13]{howald.multiplier_ideals_of_generic_polynomials}.  
Consequently, the multiplier ideals and log canonical threshold of a general polynomial can be computed combinatorially from its term ideal \cite{howald.multiplier_ideals_of_monomial_ideals}.

At present, it is understood that there is an intimate relationship between birational algebraic geometry over fields of characteristic zero and the study of singularities in prime characteristic from the point of view of the Frobenius endomorphism. 
Therefore, it is natural to  ask whether those results relating the singularities of a generic element $f \in \ideala$ to those of $\ideala$ extend to the positive characteristic setting, after replacing
multiplier ideals and the log canonical threshold with their analogs, namely test ideals and the  $F$-pure threshold.
Unfortunately, several obstructions are encountered, even when $f$ is a polynomial over $\overline{\FF}_p$ and $\ideala$ is its term ideal.  
For example, in this case the test ideals $\tau(\ideala^t)$ are monomial ideals, and depend only on the Newton polyhedron of $\ideala$ and the parameter $t$, but not on the characteristic  \cite[Theorem~6.10]{hara+yoshida.generalization_TC_multiplier_ideals}.   
In contrast,  the test ideals $\tau(f^t)$ need not be monomial and typically depend on the characteristic, often in mysterious ways.
Thus, it is not surprising that counterexamples to prime characteristic versions of the above statements
abound.  

\begin{exmp-intro}
Let $\kk$ be a field, consider the monomial ideal \[ \ideala = \ideal{x^2, y^3} \subseteq \kk[x,y], \] and let $f$ be an arbitrary $\kk^{\ast}$-linear combination of the generators $x^2$ and $y^3$. 

When $\kk = \CC$, the log canonical threshold of $\ideala$ equals $5/6$.  Furthermore, in this case, each choice of $f$ is nondegenerate with respect to its Newton polyhedron, and so the log canonical threshold of $f$ always equals that of $\ideala$.  In fact, a stronger relation holds:  The multiplier ideals of $\ideala$ and of every such $f$ agree at all parameters in the unit interval \cite[Example~9]{howald.multiplier_ideals_of_generic_polynomials}.

However, when $\kk = \overline{\FF}_p$, the $F$-pure threshold of $\ideala$ also equals $5/6$, while the $F$-pure threshold of every $f \in \ideala$ is strictly less than $5/6$ whenever $p \equiv 5 \bmod 6$ (see \cite[Example~4.3]{mustata+takagi+watanabe.F-thresholds} or  \cite[Example~3.8]{hernandez.diag_hypersurf}). In particular, for such primes, there exist parameters $t$ in the unit interval at which the test ideal of $\ideala$ differs from that of each $f$.
\end{exmp-intro}

Bearing this in mind, we are interested in the following question:  
  
\begin{ques}
\label{quesA: motivating Q}
In prime characteristic, how are the Frobenius singularities of an ideal related to those of a generic element of the ideal?  For example, to what extent is a given Frobenius invariant (especially the test ideals, $F$-pure threshold, and other $F$-jumping exponents) of a polynomial determined by an \emph{intrinsic}, but possibly different, Frobenius invariant of its term ideal?  
\end{ques}

With this motivation,  in this article we develop a new theory of Frobenius singularities of pairs in prime characteristic that we call \emph{\textup{(}generalized\textup{)} Frobenius powers}.  This construction assigns to an ideal $\ideala$ of an $F$-finite regular domain $R$ of characteristic $p>0$, and a nonnegative real number $t$, an ideal $\frob{\ideala}{t}$ of $R$ called the \emph{$t^\text{th}$ Frobenius power of $\ideala$}.  This theory is only interesting when $\ideala$ is nonzero and proper; we impose this assumption for the remainder of the introduction.

As the nomenclature and notation suggests, our Frobenius powers coincide with the standard Frobenius powers and the 
Frobenius roots of \cite{blickle+mustata+smith.discr_rat_FPTs} when the parameter $t$ is an integral power of~$p$. 
Indeed, our Frobenius powers are defined in terms of those standard operations, mimicking the construction of test ideals in \cite{blickle+mustata+smith.discr_rat_FPTs}.
Not surprisingly, the resulting theory bears many formal similarities to those of test ideals and of multiplier ideals.  
For example, as in these other theories, the ideals $\frob{\ideala}{t}$ vary discretely with $t$.
We call the parameters at which $\frob{\ideala}{t}$ ``jumps''
the \emph{critical \textup(Frobenius\textup) exponents} of $\ideala$.   
The smallest such parameter is called the \emph{least critical exponent} of $\ideala$, and is denoted $\lce(\ideala)$.  
The least critical exponent is an analog of the $F$-pure threshold, and as such, may be regarded as a prime characteristic analog of the log canonical threshold.  

The Frobenius powers  $\frob{\ideala}{t}$ are contained in the test ideals $\tau(\ideala^t)$;
	equality holds if $\ideala$ is principal, but otherwise they may differ drastically.  
For example, as noted above, the test ideals of a monomial ideal are combinatorial in nature, do not depend on the characteristic, and do not distinguish a monomial ideal from its integral closure (in fact, this last property holds for all ideals \cite[Lemma~2.27]{blickle+mustata+smith.discr_rat_FPTs}).   
In contrast, the Frobenius powers of a monomial ideal turn out to depend strongly on the characteristic and, as recorded in \Cref{exmp: Comparison with integral closure}, can differ from those of its integral closure.  
This  suggests that the Frobenius powers may often be more refined measures of singularities than test ideals.

Though Frobenius powers and test ideals can differ, one unifying observation is that the generalized Frobenius powers of an \emph{arbitrary} ideal behave in many interesting ways like the test ideals of a \emph{principal} ideal.  In fact, it is exactly this heuristic principle (which we call the \emph{Principal Principle}) that allows us to address certain instances of our motivating \Cref{quesA: motivating Q}.

Our main results are largely of two flavors: in some, we work in a fixed ambient ring, and in others, we consider Frobenius powers from the point of view of reduction to prime characteristic (that is, we let the characteristic tend to infinity).  
We summarize some of these results below.

\begin{thmintro}[\cf\ \Cref{thm: principalization}, and \Cref{cor: principalization for lce,cor: alternate principalization}]  If $\ideala$ is an ideal of an $F$-finite domain $R$, and $f$ is a {very} general linear combination of generators for $\ideala$, then the test ideals $\tau(f^t)$, which agree with the Frobenius powers $\frob{\ideal{f}}{t}$,  
are determined by the Frobenius powers $\frob{\ideala}{t}$ for every parameter $t$ in the unit interval.  In particular, the $F$-pure threshold of $f$ equals the least critical exponent of $\ideala$.
\end{thmintro}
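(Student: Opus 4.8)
The plan is to reduce the statement to a comparison of ordinary (integer-exponent) Frobenius powers $\frob{\ideala}{n}$ with $\langle f^{n}\rangle=\frob{\langle f\rangle}{n}$, and then to settle that comparison, for a very general $f$, by feeding the multinomial expansion of $f^{n}$ into a computation of Frobenius roots. First I would reduce to integer exponents: both $t\mapsto\frob{\ideala}{t}$ and $t\mapsto\tau(f^{t})=\frob{\langle f\rangle}{t}$ are nonincreasing, right-continuous, and vary discretely, so it suffices to compare them at $p$-adic rationals; and by the construction of generalized Frobenius powers as Frobenius roots of ordinary Frobenius powers, together with the identity $\frob{\langle f\rangle}{n}=\langle f^{n}\rangle$ for $n\in\NN$, one has, for each fixed $t$ and all $e\gg0$,
\[
\frob{\ideala}{t}=\bigl(\frob{\ideala}{n}\bigr)^{[1/p^{e}]},\qquad \tau(f^{t})=\bigl(\langle f^{n}\rangle\bigr)^{[1/p^{e}]},\qquad n:=\up{t\,p^{e}}.
\]
Since $f\in\ideala$ already gives $\langle f^{n}\rangle\subseteq\frob{\ideala}{n}$, the inclusion $\tau(f^{t})\subseteq\frob{\ideala}{t}$ is automatic, so the content of the theorem is the reverse inclusion for $t$ in the \emph{open} unit interval. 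For $t<1$ one has $n=\up{tp^{e}}<p^{e}$ once $e\gg0$, so it is enough to show: whenever $n<p^{e}$ and $f$ is a very general $\kk$-linear combination of fixed generators $g_{1},\dots,g_{r}$ of $\ideala$, one has $\bigl(\frob{\ideala}{n}\bigr)^{[1/p^{e}]}=\bigl(\langle f^{n}\rangle\bigr)^{[1/p^{e}]}$.

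The heart of the matter — and the step I expect to be the main obstacle — is to verify this for the ``universal'' combination $f=\sum_{i}u_{i}g_{i}$. By the multinomial theorem $f^{n}=\sum_{|\alpha|=n}\binom{n}{\alpha}\,u^{\alpha}\,g^{\alpha}$ with $g^{\alpha}=g_{1}^{\alpha_{1}}\cdots g_{r}^{\alpha_{r}}$, and by Kummer's theorem $\binom{n}{\alpha}$ is a unit exactly when no carrying occurs in the base-$p$ addition $\alpha_{1}+\dots+\alpha_{r}$, which is precisely the combinatorial recipe for a generating set of $\frob{\ideala}{n}$; thus $\frob{\ideala}{n}=\sum_{\alpha:\,\binom{n}{\alpha}\neq0}\langle g^{\alpha}\rangle$. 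Now take $u_{1},\dots,u_{r}$ algebraically independent over $\kk$ and expand $f^{n}$ in a free basis of $R\otimes_{\kk}\kk(u)$ over its subring of $p^{e}$-th powers. Because $n<p^{e}$ each $\alpha_{i}$ is already $<p^{e}$, so the monomials $u^{\alpha}$ are distinct basis elements and the contributions of distinct $\alpha$ occupy disjoint coordinates; hence the ideal generated by the Frobenius coordinates of $f^{n}$ equals $\sum_{\alpha}\langle g^{\alpha}\rangle^{[1/p^{e}]}=\bigl(\sum_{\alpha}\langle g^{\alpha}\rangle\bigr)^{[1/p^{e}]}=\bigl(\frob{\ideala}{n}\bigr)^{[1/p^{e}]}$. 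Making this bookkeeping line up exactly, and seeing why it requires $n<p^{e}$, i.e. $t<1$ — at $t=1$ the two families genuinely disagree, since $\frob{\langle f\rangle}{1}=\langle f\rangle$ is typically strictly inside $\frob{\ideala}{1}=\ideala$ — is the delicate part.

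With the universal case in hand I would descend to a very general member. The set of coefficient vectors $\lambda\in\mathbb{A}^{r}_{\kk}$ with $\bigl(\langle f_{\lambda}^{n}\rangle\bigr)^{[1/p^{e}]}=\bigl(\frob{\ideala}{n}\bigr)^{[1/p^{e}]}$ is constructible and, by the universal computation, contains the generic point, hence a dense open subset. Intersecting over the countably many pairs $(n,e)$ with $0<n<p^{e}$, the reduction applies to a very general $f$, i.e. to any $f$ whose coefficient vector avoids a fixed countable union of proper closed subsets (which over a countable field forces one to enlarge $\kk$ — this is why ``very general'' rather than ``general'' is needed). This gives $\tau(f^{t})=\frob{\langle f\rangle}{t}=\frob{\ideala}{t}$ for every $t$ in the open unit interval, which is the asserted determination.

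Finally the threshold statement follows formally. Since $f$ is a nonzero nonunit of the regular $F$-finite domain $R$ one has $\fpt(f)\le1$, and $\lce(\ideala)\le1$ because $\frob{\ideala}{1}=\ideala\neq R$. By the previous paragraph, for every $t\in(0,1)$ we have $\tau(f^{t})=R$ if and only if $\frob{\ideala}{t}=R$; since each family is nonincreasing in $t$ and proper for all $t\ge1$, the least parameter at which it leaves $R$ agrees on the two sides, i.e. $\fpt(f)=\lce(\ideala)$.
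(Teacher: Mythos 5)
Your core computation is exactly the paper's proof of \Cref{thm: principalization}: expand the $n$-th power of the universal combination by the multinomial theorem, note via Dickson's congruence that the surviving products $g^{\vv{h}}$ are precisely the generators of $\frob{\ideala}{n}$ (\Cref{prop: generators for Frobenius powers}), and use $n<p^e$ to see that the coefficient monomials are distinct members of a free basis over the subring of $p^e$-th powers, so that the Frobenius root of $\ideal{f^n}$ is generated coordinate-by-coordinate by the roots of the individual $\ideal{g^{\vv{h}}}$. The only place you diverge is at the end: the paper reads ``very general'' as the generic combination itself, regarded as an element of $R[\vz]$ or $\kk(\vz)[\vx]$ (\Cref{cor: alternate principalization}), and carries out a specialization argument only at the level of $F$-pure thresholds (\Cref{thm: generic linear combinations 2}), whereas you descend to honest coefficient vectors avoiding countably many proper closed subsets for the entire family of test ideals. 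That extra step is sound in outline---the locus where a fixed element lies in an ideal generated by elements depending polynomially on the parameters is constructible, and it contains the generic point by your universal computation---but extracting $p^e$-th roots of the specialized coefficients requires $\kk$ perfect (or additional bookkeeping with a $p$-basis of $\kk$), which is one reason the paper prefers to keep the coefficients as indeterminates.
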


At the level of $F$-pure thresholds, we also obtain a related result in which the ``very general" condition is weakened, though at the expense of restricting to the local case.  

\begin{thmintro}[\cf\ \Cref{thm: generic linear combinations 2} and \Cref{cor: crit(a) = max(c(g): g in a)}]
\label{generic: TI}
Let $\ideala$ be an ideal of a polynomial ring over $\kk = \overline{\FF}_p$ vanishing at a point $P$.  Fixing generators $\ideala = \ideal{g_1, \ldots, g_m}$, we may regard a given $\kk$-linear combination of these generators as a point in $\kk^m$. Under this identification, the set of all such generic elements of $\ideala$ whose $F$-pure threshold at $P$ agrees with the least critical exponent of $\ideala$ at $P$ is a nonempty open set of $\kk^m$.  
\end{thmintro}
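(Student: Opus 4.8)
The plan is to compare, at $P$, the principal ideals $\ideal{g_\lambda}$ of $g_\lambda := \lambda_1 g_1 + \cdots + \lambda_m g_m$ with $\ideala$ itself, level by level in the Frobenius filtration. Write $R$ for the ambient polynomial ring, $\idealm_P$ for the maximal ideal at $P$, and $c := \lce_P(\ideala)$; I recall the $\nu$-invariants $\nu^{\idealm_P}_\ideala(p^e) = \max\{\, n : \frob{\ideala}{n} \not\subseteq \frob{\idealm_P}{p^e}\,\}$, with $c = \lim_e \nu^{\idealm_P}_\ideala(p^e)/p^e$, and similarly $\fpt_P(g) = \lim_e \nu^{\idealm_P}_{\ideal{g}}(p^e)/p^e$ where, since $\frob{\ideal{g}}{n} = \ideal{g^n}$, one has $\nu^{\idealm_P}_{\ideal{g}}(p^e) = \max\{\, n : g^{\,n} \notin \frob{\idealm_P}{p^e}\,\}$. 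Since $\ideala$ vanishes at $P$, all of these numbers are finite. As $\ideal{g_\lambda} \subseteq \ideala$, monotonicity of Frobenius powers gives $\frob{\ideal{g_\lambda}}{n} \subseteq \frob{\ideala}{n}$ for all $n$, hence $\nu^{\idealm_P}_{\ideal{g_\lambda}}(p^e) \le \nu^{\idealm_P}_\ideala(p^e)$ for all $e$, and so $\fpt_P(g_\lambda) \le c$ for \emph{every} $\lambda \in \kk^m$. Thus the set under consideration is $\{\lambda : \fpt_P(g_\lambda) = c\}$, and it remains to show this set is Zariski-open and nonempty.

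The key step is to show that the ``universal'' combination $g_t := \sum_i t_i g_i$, with indeterminate coefficients over $L := \kk(t_1,\dots,t_m)$, already realizes the invariants of $\ideala$: $\nu^{\idealm_P}_{\ideal{g_t}}(p^e) = \nu^{\idealm_P}_\ideala(p^e)$ for all $e$, whence $\fpt_P(g_t) = c$ (using that Frobenius powers commute with $\kk \hookrightarrow L$, together with Step~1 applied over $L$). Only ``$\ge$'' needs proof. Writing $n = \sum_j n_j p^j$ in base $p$, one has
\[
g_t^{\,n} \;=\; \prod_j \bigl(g_t^{\,n_j}\bigr)^{p^j} \;=\; \sum_{(\beta_0,\dots,\beta_d)} \Bigl(\textstyle\prod_j \binom{n_j}{\beta_j}^{\,p^j}\Bigr)\, t^{\,\sum_j p^j \beta_j}\; \prod_j \bigl(g^{\beta_j}\bigr)^{p^j},
\]
the sum over tuples with $\lvert \beta_j \rvert = n_j$. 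Two observations make this decisive: every multinomial $\binom{n_j}{\beta_j}$ is a unit mod $p$, since its numerator and denominator are products of factorials of integers $< p$ (as $n_j < p$); and the monomials $t^{\sum_j p^j \beta_j}$ are pairwise distinct, since their exponent vectors recover the $\beta_j$ digit by digit in base $p$. Hence $g_t^{\,n}$ is a combination, with $\kk$-linearly independent coefficient functions, of \emph{all} of the natural generators $\prod_j (g^{\beta_j})^{p^j}$ of $\frob{\ideala}{n} = \prod_j \frob{(\ideala^{n_j})}{p^j}$. Reducing modulo $\frob{\idealm_P}{p^e}$ --- a $\kk$-subspace of the finite-dimensional space $R/\frob{\idealm_P}{p^e}$ --- linear independence of those coefficient functions forces $g_t^{\,n} \in \frob{\idealm_P}{p^e}$ if and only if each such generator lies in $\frob{\idealm_P}{p^e}$, i.e. if and only if $\frob{\ideala}{n} \subseteq \frob{\idealm_P}{p^e}$. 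Taking the largest $n$ with $\frob{\ideala}{n} \not\subseteq \frob{\idealm_P}{p^e}$ yields ``$\ge$''.

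For openness I would pass to finite level. For fixed $n$ and $e$, membership $g_\lambda^{\,n} \in \frob{\idealm_P}{p^e}$ is the simultaneous vanishing of finitely many polynomials in $\lambda$ (the coordinates of $g_\lambda^{\,n}$ in a monomial basis of $R/\frob{\idealm_P}{p^e}$), so $U_e := \{\lambda \in \kk^m : \nu^{\idealm_P}_{\ideal{g_\lambda}}(p^e) = \nu^{\idealm_P}_\ideala(p^e)\}$ is Zariski-open, and it is nonempty because the previous step identifies its generic point with $g_t$ (and $\kk$ is algebraically closed). Moreover the chain $U_0 \supseteq U_1 \supseteq \cdots$ is descending: from the recursions $p\,\nu^{\idealm_P}_{\ideal{g}}(p^e) \le \nu^{\idealm_P}_{\ideal{g}}(p^{e+1}) \le p\,\nu^{\idealm_P}_{\ideal{g}}(p^e) + p - 1$ (the lower one using flatness of the Frobenius on the regular ring $R$), a ``defect'' $\nu^{\idealm_P}_{\ideal{g_\lambda}}(p^e) < \nu^{\idealm_P}_\ideala(p^e)$ propagates to level $e+1$, so $\lambda \notin U_e$ implies $\lambda \notin U_{e+1}$. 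By Noetherianity of $\mathbb{A}^m_\kk$ the chain stabilizes, say $U_{e_0} = U_{e_0+1} = \cdots =: U$. If $\lambda \in U$ then $\nu^{\idealm_P}_{\ideal{g_\lambda}}(p^e) = \nu^{\idealm_P}_\ideala(p^e)$ for all $e \ge e_0$, so $\fpt_P(g_\lambda) = \lim_e \nu^{\idealm_P}_\ideala(p^e)/p^e = c$; conversely, if $\fpt_P(g_\lambda) = c$, then since the tail of the sequence $\nu^{\idealm_P}_{\ideal{g_\lambda}}(p^e)$ is determined by its limit (a standard structural fact, applied to both $g_\lambda$ and $g_t$, whose limits agree), we get $\nu^{\idealm_P}_{\ideal{g_\lambda}}(p^e) = \nu^{\idealm_P}_\ideala(p^e)$ for $e \gg 0$, hence $\lambda \in \bigcap_{e \ge e_0} U_e = U$. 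Therefore $\{\lambda : \fpt_P(g_\lambda) = c\} = U$, which is Zariski-open; and it is nonempty since $U$ is nonempty and defined over the algebraically closed field $\kk$.

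I expect the main obstacle to be the second step: the content is that a \emph{single} generic element already detects the Frobenius power $\frob{\ideala}{n}$ modulo $\frob{\idealm_P}{p^e}$, which one reduces --- via the base-$p$ digits of $n$ --- to the two elementary facts that the ambient multinomials are $p$-units and that the monomials appearing as their coefficients are distinct. This is exactly the input that the ``very general'' hypothesis of \Cref{thm: principalization} supplies in a weaker, global form; upgrading it to a genuine Zariski-open condition over $\overline{\FF}_p$ is what forces the statement to be local, and is the real point here. (Nonemptiness can alternatively be quoted directly from \Cref{cor: crit(a) = max(c(g): g in a)}.)
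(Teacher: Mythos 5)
Most of your argument is sound and runs parallel to the paper's: your Step~1 is the easy inequality $\fpt_P(g_\lambda)\le\lce_P(\ideala)$, your Step~2 is a hands-on version of \Cref{prop: generic linear combinations 1} and \Cref{rem: alternate principalization} (the base-$p$ digit expansion, the unit multinomials, and the distinctness of the monomials $t^{\sum_j p^j\beta_j}$ are exactly the content of \Cref{prop: generators for Frobenius powers} and \Cref{cor: multinomial congruence}), and the openness and nonemptiness of each finite-level set $U_e$ is correct. The problem is the stabilization of the chain $U_0\supseteq U_1\supseteq\cdots$.

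``By Noetherianity of $\mathbb{A}^m_\kk$ the chain stabilizes'' is backwards: Noetherianity gives stabilization of \emph{descending} chains of \emph{closed} sets (equivalently, ascending chains of opens). Your complements $Z_e=\mathbb{A}^m_\kk\setminus U_e$ form an \emph{ascending} chain of closed sets, and such chains need not stabilize in a Noetherian space (e.g.\ $\{0\}\subseteq\{0,1\}\subseteq\{0,1,2\}\subseteq\cdots$ in $\mathbb{A}^1$); note also that the degrees of the defining polynomials of $Z_e$ grow like $p^e$, so no degree bound saves you. Unwinding your definitions, $Z_e=\{\lambda:\fpt_P(g_\lambda)\le\mu_e/p^e\}$ where $\mu_e/p^e$ increases strictly to $c$, so $\bigcup_e Z_e=\{\lambda:\fpt_P(g_\lambda)<c\}$, and the chain stabilizes if and only if the set of values $\{\fpt_P(g_\lambda):\lambda\in\kk^m\}$ is bounded away from $c$ from below. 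That is a genuine theorem, not a formal consequence of anything you have written: it is exactly the uniform discreteness of $F$-jumping exponents of the family $\{g_\lambda\}$ supplied by \Cref{lem: discreteSetBMS} (quoting \cite[Proposition~3.8, Remark~3.9]{blickle+mustata+smith.discr_rat_FPTs}). The paper's proof of \Cref{thm: generic linear combinations 2} uses precisely this to choose a single $p$-rational $i/q$ in a gap $(\lambda-\epsilon,\lambda)$ below $c$ so that the one finite-level condition $g^i\notin\frob{\idealm_P}{q}$ already characterizes $\fpt_P(g)=c$; that single condition is the open set. So your proof has a real gap at its final step, and you have also misidentified the hard point: Step~2 is routine, while the discreteness input you are missing is the essential one. (Everything else --- the monotonicity of the $U_e$ via $\mu(pq)\ge p\,\mu(q)$ and $\nu(pq)\le p\,\nu(q)+p-1$, and the recovery of $\nu$ from $\fpt$ for principal ideals --- is fine once stabilization is granted.)
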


The foregoing results show that the least critical exponent of an ideal can always be realized as the $F$-pure threshold of a principal ideal.  Consequently,  properties enjoyed by $F$-pure thresholds of principal ideals, but that may fail for arbitrary ideals, are inherited by least critical exponents.  For example,  although every rational number in the open unit interval is the $F$-pure threshold of some ideal, all least critical exponents avoid the same ``forbidden intervals'' that $F$-pure thresholds of principal ideals avoid (see \Cref{cor: forbidden intervals}, and the discussion preceding it, for more details).

Some of the most compelling results in the analogy between test ideals and multiplier ideals persist when test ideals are 
replaced with Frobenius powers. 
For example, in the sense that the multiplier ideal is a ``universal test ideal,'' it is also a ``universal Frobenius power.''  Likewise, the relationship between the least critical exponent and the log canonical threshold has strong similarities to
 the relationship that the $F$-pure threshold has with the latter.

\begin{thmintro}[\cf\ \Cref{prop: basic properties of critical exponents}, and \Cref{limits of critical exponents: T,limits of Frobenius powers: T}] 
\label{limits: TI}
Let $\ideala$ be an ideal of the localization of a polynomial ring over $\QQ$ at a point, and let $\ideala_p$ denote its reduction modulo a prime $p$. 
If $t$ is a parameter in the open unit interval, then
		\[ \frob{\ideala_p}{t} = \tau(\ideala_p^{t}) = \mathcal{J}(\ideala^{t})_p \] 
		for all $p \gg 0$.
Moreover, $ \lce(\ideala_p) \leq \fpt(\ideala_p) \leq \lct(\ideala)$, and if $\lct(\ideala)\leq 1$, then 
		\[ \lim_{p \to \infty} \lce(\ideala_p) = \lim_{p \to \infty} \fpt(\ideala_p) = \lct(\ideala).\]
\end{thmintro}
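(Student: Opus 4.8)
The plan is to establish the containment chain first, then handle the limit.  For the inequalities $\lce(\ideala_p) \le \fpt(\ideala_p) \le \lct(\ideala)$, I would argue as follows.  The first inequality is purely characteristic $p$: the Frobenius powers $\frob{\ideala_p}{t}$ are contained in the test ideals $\tau(\ideala_p^t)$ (as recalled in the introduction), so the parameter at which the former first becomes proper is no larger than the parameter at which the latter first becomes proper; the former is $\lce(\ideala_p)$ and the latter is $\fpt(\ideala_p)$.  The second inequality $\fpt(\ideala_p) \le \lct(\ideala)$ is a known fact from the literature (reduction mod $p$ of the log canonical threshold dominates the $F$-pure threshold), which I would cite rather than reprove; it also follows from $\tau(\ideala_p^t) \subseteq \mathcal{J}(\ideala^t)_p$ for $p \gg 0$ together with the characterization of both thresholds as jumping numbers.

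Next, the equality $\frob{\ideala_p}{t} = \tau(\ideala_p^t) = \mathcal{J}(\ideala^t)_p$ for $p \gg 0$ and $t$ in the open unit interval.  The rightmost equality $\tau(\ideala_p^t) = \mathcal{J}(\ideala^t)_p$ for $p \gg 0$ is the fundamental theorem of Hara--Yoshida / Mustață--Takagi--Watanabe on reduction mod $p$ of multiplier ideals, which I would invoke directly.  The real content is the leftmost equality $\frob{\ideala_p}{t} = \tau(\ideala_p^t)$.  I would prove this by unwinding the definition of Frobenius powers given in the body of the paper: $\frob{\ideala}{t}$ is built from the standard Frobenius powers and Frobenius roots in a way parallel to the Blickle--Mustață--Smith construction of test ideals, the difference being that at each level $p^e$ one uses the ``honest'' power $\frob{\ideala}{\lceil t p^e \rceil}$ in place of the ordinary power $\ideala^{\lceil t p^e \rceil}$.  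For a fixed $t$ in the unit interval, the key observation should be that $\frob{\ideala}{n}$ and $\ideala^n$ differ in a controlled way — indeed for $p \gg 0$ the relevant exponents $n = \lceil t p^e \rceil$ are such that the Frobenius power and ordinary power have the same test ideal (one can compare $\frob{\ideala}{np} = \frob{(\ideala^n)}{p}$ with $(\ideala^p)^n$, etc.), so the two limiting constructions stabilize to the same ideal.  I expect this comparison to require a uniform bound, independent of $p$, on how far $\frob{\ideala}{n}$ can be from $\ideala^n$ — something like a uniform Artin--Rees or Briançon--Skoda statement — which is exactly where the hypothesis $p \gg 0$ enters.

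For the limit statement, assume $\lct(\ideala) \le 1$.  By the inequalities already established, $\limsup_{p \to \infty} \fpt(\ideala_p) \le \lct(\ideala)$ and $\limsup_{p\to\infty}\lce(\ideala_p) \le \lct(\ideala)$, so it suffices to show $\liminf_{p\to\infty}\lce(\ideala_p) \ge \lct(\ideala)$ (the $\fpt$ statement then follows by squeezing, since $\lce(\ideala_p) \le \fpt(\ideala_p)\le\lct(\ideala)$).  Fix $t < \lct(\ideala)$ rational, $t < 1$; then $\mathcal{J}(\ideala^t) = R$, hence by the equality $\frob{\ideala_p}{t} = \mathcal{J}(\ideala^t)_p$ for $p \gg 0$ we get $\frob{\ideala_p}{t} = R_p$ for all large $p$, which says precisely $t \le \lce(\ideala_p)$ for $p \gg 0$.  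Letting $t \uparrow \lct(\ideala)$ through rationals gives $\liminf_{p\to\infty}\lce(\ideala_p)\ge\lct(\ideala)$, completing the proof.

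The main obstacle I anticipate is the leftmost equality $\frob{\ideala_p}{t} = \tau(\ideala_p^t)$ for $p \gg 0$: one must show that replacing ordinary powers by Frobenius powers inside the limiting construction does not change the output, and this is genuinely a statement about the interplay between the two operations that requires a bound uniform in $p$.  Everything else is either a citation (Hara--Yoshida, Mustață--Takagi--Watanabe) or a formal squeezing argument.
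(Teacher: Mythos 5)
Your overall architecture is the same as the paper's, and most of the pieces are sound: the inequality $\lce(\ideala_p) \le \fpt(\ideala_p)$ from $\frob{\ideala_p}{t} \subseteq \tau(\ideala_p^t)$, the citation of Hara--Yoshida for $\tau(\ideala_p^t) = \mathcal{J}(\ideala^t)_p$, and the squeezing argument for the limits are all correct. (Your route to the limit of $\lce(\ideala_p)$ --- reading it off directly from the ideal-level equality $\frob{\ideala_p}{t} = \mathcal{J}(\ideala^t)_p$ at rational $t < \lct(\ideala)$ --- is a legitimate alternative to the paper's, which instead squeezes the thresholds via the inequality $\crit^{\idealb_p}(\ideala_p) \ge \ft^{\idealb_p}(\ideala_p) - \tfrac{m-1}{p-1}$.)

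The genuine gap is exactly where you flag it: the equality $\frob{\ideala_p}{t} = \tau(\ideala_p^t)$ for $p \gg 0$. You say you "expect" a uniform Brian\c{c}on--Skoda-type bound to do the job, but you never produce it, and without it the argument does not close. The paper's bound is elementary and worth knowing: by the pigeonhole principle $\ideala^{mq} = \ideala^{(m-1)q}\frob{\ideala}{q}$ for an $m$-generated ideal, and iterating over the base-$p$ digits of $k < q$ gives $\ideala^{l}\,\ideala^{k} = \ideala^{l}\,\frob{\ideala}{k}$ with $l = \tfrac{(m-1)(q-1)}{p-1}$; taking $[1/q]^{\mathrm{th}}$ powers and letting $q \to \infty$ yields the sandwich $\tau\big(\ideala^{\,t + \frac{m-1}{p-1}}\big) \subseteq \frob{\ideala}{t} \subseteq \tau(\ideala^{t})$. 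The second missing ingredient is how the error $\tfrac{m-1}{p-1}$ gets absorbed: one must first fix $\epsilon > 0$ with $\mathcal{J}(\ideala^{t}) = \mathcal{J}(\ideala^{t+\epsilon})$ (right-constancy of multiplier ideals), apply Hara--Yoshida at \emph{both} parameters $t$ and $t+\epsilon$, and then take $p$ large enough that $\tfrac{m-1}{p-1} < \epsilon$, so that all ideals in the chain $\tau(\ideala_p^{t+\epsilon}) \subseteq \frob{\ideala_p}{t} \subseteq \tau(\ideala_p^{t})$ are pinched between two equal multiplier-ideal reductions. Your sketch via comparing $\frob{\ideala}{np}$ with $\ideala^{np}$ does not by itself give a bound that is uniform in $e$ (the exponents $\lceil tp^e\rceil$ grow with $e$ for fixed $p$), so the "stabilize to the same ideal" step needs the explicit pigeonhole lemma above, not just the observation for a single $n < p$.
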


\subsection*{Some questions}

The results described herein inspire several natural questions.    
For instance, in \Cref{limits: TI}, we saw that the Frobenius powers, the test ideals, and the reductions of multiplier ideals coincide at each parameter in the open unit interval, at least when $p$ is large enough.  Motivated by this, and by a well-known question on the relationship between test and multiplier ideals, we ask the following.

\begin{ques}
\label{question 1: q}
What is the significance of the discrepancy between the Frobenius power $\ideala_p^{[t]}$ and the test ideal $\tau(\ideala_p^t)$, when they happen to differ in low characteristic?  Furthermore,  do there exist infinitely many $p$ such that \[ \frob{\ideala_p}{t} = \tau(\ideala_p^{t}) = \mathcal{J}(\ideala^{t})_p\] for all parameters $t$ in the open unit interval?  
\end{ques}

Our next question is motivated by \Cref{generic: TI}, which tells us that the local $F$-pure threshold of a sufficiently general polynomial at a point coincides with the least critical exponent of its term ideal at that point.
Furthermore, the proof is constructive, giving an explicit description of what it means to be ``sufficiently general.''

\begin{ques}
Is it possible to relate the \emph{sufficiently general} condition in \Cref{generic: TI} to other common notions of generality (\eg \emph{Newton nondegeneracy})? 
\end{ques}

As noted throughout this introduction, the problem of understanding the Frobenius powers of monomial ideals is closely related to the problem of understanding the test ideals of polynomials.  Thus, we ask the following question.

\begin{ques}
\label{monomial-question: q}
Given a monomial ideal $\ideala$ over a field of characteristic zero, can one explicitly describe the Frobenius powers of its reductions modulo $p \gg 0$?  For instance, do they vary uniformly with the class of $p$ modulo some fixed integer?  Do their critical exponents satisfy the uniformity conditions described in \cite[Problems~3.7, 3.8, and~3.10]{mustata+takagi+watanabe.F-thresholds}?
\end{ques}

In the article \cite{hernandez+etal.frobenius_examples}, the authors determine the Frobenius powers of diagonal monomial ideals, and of powers of the homogeneous maximal ideal, and give an affirmative answer to all parts of \Cref{monomial-question: q} in each case.  Furthermore, the authors expect to address \Cref{monomial-question: q} in general, in an upcoming paper.

In prime characteristic, it is not hard to show that the test ideal of a polynomial at a parameter $t$ is a monomial ideal if and only if it agrees with the Frobenius power of its term ideal at $t$. 
This motivates the following question.

\begin{ques} 
When is the test ideal of a polynomial a monomial ideal?
For instance, do there exist conditions on a monomial ideal $\ideala$ over a field of characteristic zero guaranteeing that for any sufficiently general polynomial $f \in \ideala$,  the test ideals $\tau(f_p^t)$ are also monomial for all $t$ in the open unit interval?  
 \end{ques}

\subsection*{Outline} In \Cref{s: prelim}, we review our notation, and  basics on multinomial coefficients in prime characteristic.   \Cref{s: frobenius} is dedicated to constructing, and establishing the basic properties of, generalized Frobenius powers.  In \Cref{s: crits}, we define and study critical Frobenius exponents,  and compare them to $F$-jumping exponents.  In \Cref{s: principal principle}, we relate Frobenius powers of arbitrary ideals to test ideals of principal ideals, and derive many of the results mentioned above.  In addition, we also establish the discreteness and rationality of critical exponents in this section.  We investigate Frobenius powers as the characteristic tends to infinity in \Cref{s: behavior as p approaches infinity}, and present an algorithm for computing Frobenius powers of ideals in a polynomial ring in \Cref{s: algorithm}.

\section{Preliminaries} \label{s: prelim}

\subsection{Notations and conventions}

Throughout the paper, $p$ denotes a positive prime integer, and $q$ is shorthand for a number of the form $p^e$ for some nonnegative integer $e$.  All rings considered are commutative.  If $\ideala$ is an ideal in a ring of characteristic $p$, then \[ 
\frob{\ideala}{q} = \ideal{f^q: f\in \ideala} \] is the $q^\textrm{th}$ Frobenius power of $\ideala$.
	
Vectors are denoted by boldface lower case letters, and their components by the same letter in regular font; \eg $\vv{v}=(v_1,\ldots,v_n)$. 
In line with this, $\vv{0} = (0, \ldots, 0)$ and $\vv{1} = (1,\ldots, 1)$.  The taxicab norm  $\norm{\vv{v}}$ of a vector $\vv{v}$ is the sum of the absolute values of its components.
    	 Given vectors $\vv{u}$ and $\vv{v}$ of the same dimension, $\vv{u}<\vv{v}$ denotes componentwise inequality, 
	so $\vv{u}<\vv{v}$ if and only if $u_i < v_i$ for each~$i$. 
The non-strict inequality $\vv{u}\le \vv{v}$ is defined analogously. 

We adopt standard monomial notation:  if $x_1, \ldots, x_m$ are elements of a ring, and $\vv{u} \in \NN^m$, then \[ \vx^{\vv{u}} = x_1^{u_1}\cdots x_m^{u_m}.\]

A rational number that can be written in the form $k/p^n$, for some $k\in \ZZ$ and $n\in \NN$, 
			is called \emph{$p$-rational}.
The sets consisting of all nonnegative and all positive $p$-rational numbers are denoted by $\infint$ and $(\QQpos)_{p^\infty}$, respectively.
The fractional part of a real number $t$ is denoted by $\fr{t}$; that is, $\fr{t}=t-\down{t}$.

\subsection{Multinomial coefficients}

The multinomial coefficient associated to a vector $\vv{u} \in \NN^m$ is
	\[
		\binom{\norm{\vv{u}} }{\vv{u}}=\binom{\norm{\vv{u}}}{u_1,\ldots,u_m} = 
			\dfrac{\norm{\vv{u}}!}{u_1!\cdots u_m!}.
	\]
If $k$ is an integer with $k \ne \norm{\vv{u}}$, then we set $\binom{k}{\vv{u}}=0$.
	
\begin{thm}[\cite{dickson.multinomial}]
	Consider a vector $\vv{u} \in \NN^m$, and 
	write the terminating base $p$ expansions of $\norm{\vv{u}}$ and $\vv{u}$ as follows\textup:		
		\begin{equation*}
			\norm{\vv{u}} = k_0+k_1p+k_2p^2+\cdots+k_rp^r\quad \text{and} \quad
			\vv{u}=\vv{u}_0+p\,\vv{u}_1+p^2\,\vv{u}_2+\cdots+p^r\,\vv{u}_r,
		\end{equation*}
	where each $k_i$ is an integer between $0$ and $p-1$, inclusive, and each $\vv{u}_i$ is a vector in $\NN^m$ less than $p \cdot \vv{1}$.  	\textup{(}Note that it is possible that $k_r = 0$ or $\vv{u}_r = \vv{0}$.\textup{)}
	Then 
		\[
			\binom{\norm{\vv{u}}}{\vv{u}}\equiv \binom{k_0}{\vv{u}_0}\binom{k_1}{\vv{u}_1}\cdots \binom{k_r}{\vv{u}_r} \mod{p}.
		\]
	In particular, $\binom{\norm{\vv{u}}}{\vv{u}}\not\equiv 0\bmod{p}$ if and only if $\norm{\vv{u}_i}=k_i$ for each $i$, which is to say that the components of $\vv{u}$ sum to $\norm{\vv{u}}$ without carrying \textup(base $p$\textup).
\qed
\end{thm}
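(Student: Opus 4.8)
The plan is to realize the multinomial coefficient $\binom{\norm{\vv{u}}}{\vv{u}}$ as the coefficient of the monomial $x^{\vv{u}}$ in $(x_1+\cdots+x_m)^{\norm{\vv{u}}}\in\FF_p[x_1,\ldots,x_m]$, and then to compute that same coefficient a second way by exploiting the Frobenius endomorphism. The two computations must agree, and comparing them gives the congruence directly.

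First I would record the elementary fact that in characteristic $p$ the Frobenius is additive, so $(x_1+\cdots+x_m)^{p^i}=x_1^{p^i}+\cdots+x_m^{p^i}$ for every $i\ge 0$. Substituting the terminating base $p$ expansion $\norm{\vv{u}}=k_0+k_1p+\cdots+k_rp^r$ then yields, in $\FF_p[x_1,\ldots,x_m]$,
\[
(x_1+\cdots+x_m)^{\norm{\vv{u}}}=\prod_{i=0}^{r}\bigl((x_1+\cdots+x_m)^{p^i}\bigr)^{k_i}=\prod_{i=0}^{r}\bigl(x_1^{p^i}+\cdots+x_m^{p^i}\bigr)^{k_i}.
\]

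Next I would expand both sides by the multinomial theorem and compare the coefficient of $x^{\vv{u}}$. On the left it is $\binom{\norm{\vv{u}}}{\vv{u}}$ reduced mod $p$. On the right, expanding each factor gives $\bigl(x_1^{p^i}+\cdots+x_m^{p^i}\bigr)^{k_i}=\sum_{\norm{\vv{w}}=k_i}\binom{k_i}{\vv{w}}x^{p^i\vv{w}}$, so a copy of $x^{\vv{u}}$ arises exactly from a choice of vectors $\vv{w}_0,\ldots,\vv{w}_r\in\NN^m$ with $\norm{\vv{w}_i}=k_i$ for each $i$ and $\sum_i p^i\vv{w}_i=\vv{u}$. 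This is the one place that needs an observation: since $\norm{\vv{w}_i}=k_i\le p-1$, every component of $\vv{w}_i$ is at most $p-1$, so $\vv{w}_i<p\cdot\vv{1}$, and hence by uniqueness of the base $p$ expansion (applied componentwise) the only possibility is $\vv{w}_i=\vv{u}_i$ for all $i$ --- and such a choice exists precisely when $\norm{\vv{u}_i}=k_i$ for every $i$, which matches the convention $\binom{k_i}{\vv{u}_i}=0$ otherwise. Collecting these contributions, the coefficient of $x^{\vv{u}}$ on the right is $\prod_{i=0}^{r}\binom{k_i}{\vv{u}_i}$, which establishes the congruence.

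Finally, for the ``in particular'' clause I would note that if $\norm{\vv{u}_i}=k_i$ then $\binom{k_i}{\vv{u}_i}$ is an honest multinomial coefficient $k_i!/(u_{i,1}!\cdots u_{i,m}!)$ with $k_i\le p-1$, hence coprime to $p$; and it is $0$ otherwise. Therefore $\prod_i\binom{k_i}{\vv{u}_i}\not\equiv 0\bmod p$ if and only if $\norm{\vv{u}_i}=k_i$ for all $i$, which is precisely the ``addition without carrying'' condition. I do not expect a genuine obstacle here, since the argument is entirely formal; the only temptation to avoid is the alternative route that factors $\binom{\norm{\vv{u}}}{\vv{u}}$ as a telescoping product of binomials $\binom{u_j+\cdots+u_m}{u_j}$ and applies Lucas's theorem factor by factor, which is more painful because the base $p$ digits of the partial sums $u_j+\cdots+u_m$ are entangled by carrying.
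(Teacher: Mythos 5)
Your proof is correct and complete. Note that the paper itself offers no proof of this statement at all: it is cited to Dickson and closed with a \qed, so there is nothing to compare step for step. Your argument is the standard generating-function proof of the multinomial Lucas--Dickson congruence: identify $\binom{\norm{\vv{u}}}{\vv{u}}$ as the coefficient of $x^{\vv{u}}$ in $(x_1+\cdots+x_m)^{\norm{\vv{u}}}$ over $\FF_p$, factor via the Frobenius identity $(x_1+\cdots+x_m)^{p^i}=x_1^{p^i}+\cdots+x_m^{p^i}$, and match coefficients. The two points that genuinely need checking are both handled: (i) any $\vv{w}_i$ with $\norm{\vv{w}_i}=k_i\le p-1$ automatically has all components below $p$, so uniqueness of componentwise base-$p$ expansion forces $\vv{w}_i=\vv{u}_i$ and the product telescopes to $\prod_i\binom{k_i}{\vv{u}_i}$ (with the paper's convention $\binom{k}{\vv{v}}=0$ when $k\ne\norm{\vv{v}}$ absorbing the case where no valid choice exists); and (ii) for the ``in particular'' clause, $\binom{k_i}{\vv{u}_i}$ with $k_i<p$ is a unit mod $p$ whenever it is nonzero, since $k_i!$ is prime to $p$. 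This route is cleaner than iterating the two-variable Lucas theorem across a telescoping product of binomials, exactly as you observe, and it also yields \Cref{cor: multinomial congruence} as an immediate byproduct.
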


\begin{cor}\label{cor: multinomial congruence}
	Let $k,l\in \NN$, with $k<p$, and $\vv{u},\vv{v}\in \NN^m$, with $\vv{u}<p\cdot \vv{1}$.
	Then 
	\[
	\pushQED{\qed} 
	\binom{k+pl}{\vv{u}+p\vv{v}}\equiv \binom{k}{\vv{u}}\binom{l}{\vv{v}} \mod{p}.\qedhere
	\popQED
	\]
\end{cor}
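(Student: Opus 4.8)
The plan is to read the congruence off directly from Dickson's theorem, viewing the left-hand side as the ``diagonal'' multinomial coefficient $\binom{\norm{\vv{w}}}{\vv{w}}$ attached to $\vv{w}=\vv{u}+p\vv{v}$, and disposing of the degenerate case by hand. First I would deal with the situation $k+pl\neq\norm{\vv{u}}+p\norm{\vv{v}}$. Since $\norm{\vv{u}+p\vv{v}}=\norm{\vv{u}}+p\norm{\vv{v}}$, the left-hand side is then $0$ by the convention that a multinomial coefficient with mismatched top vanishes; and the right-hand side vanishes too, because $\binom{k}{\vv{u}}\binom{l}{\vv{v}}\neq 0$ would force $\norm{\vv{u}}=k$ and $\norm{\vv{v}}=l$, hence $\norm{\vv{u}}+p\norm{\vv{v}}=k+pl$. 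So from this point on I may assume $k+pl=\norm{\vv{w}}$.

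Next I would record base-$p$ expansions. Write $l=l_0+l_1p+l_2p^2+\cdots$ with $0\le l_j\le p-1$, and $\vv{v}=\vv{v}_0+p\vv{v}_1+p^2\vv{v}_2+\cdots$ with $\vv{v}_j<p\cdot\vv{1}$. Because $k<p$ and $\vv{u}<p\cdot\vv{1}$, the identities
\[ k+pl=k+l_0p+l_1p^2+\cdots \qquad\text{and}\qquad \vv{u}+p\vv{v}=\vv{u}+p\vv{v}_0+p^2\vv{v}_1+\cdots \]
are precisely the terminating base-$p$ expansions of $\norm{\vv{w}}$ and of $\vv{w}$, with digit data $(k,l_0,l_1,\dots)$ and $(\vv{u},\vv{v}_0,\vv{v}_1,\dots)$ respectively. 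Applying Dickson's theorem to $\vv{w}$ therefore gives
\[ \binom{k+pl}{\vv{u}+p\vv{v}}\equiv\binom{k}{\vv{u}}\prod_{j\ge 0}\binom{l_j}{\vv{v}_j}\pmod p. \]

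Finally I would check that $\prod_{j\ge0}\binom{l_j}{\vv{v}_j}\equiv\binom{l}{\vv{v}}\pmod p$ in all cases. When $\norm{\vv{v}}=l$ this is exactly Dickson's theorem applied to $\vv{v}$. When $\norm{\vv{v}}\neq l$ we have $\binom{l}{\vv{v}}=0$ by convention, and the product is also $\equiv 0$: if it were nonzero modulo $p$ then $\binom{l_j}{\vv{v}_j}\neq 0$, hence $\norm{\vv{v}_j}=l_j$, for every $j$, forcing $\norm{\vv{v}}=\sum_j p^j\norm{\vv{v}_j}=\sum_j p^jl_j=l$, a contradiction. Substituting into the previous display completes the proof. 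The argument is pure bookkeeping; the only thing to watch — and the point I would flag as the (very mild) main obstacle — is keeping track of the vanishing convention for mismatched multinomial coefficients, so that the asserted congruence is an \emph{identity}, valid whether or not $\norm{\vv{u}}=k$ and $\norm{\vv{v}}=l$.
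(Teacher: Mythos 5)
Your argument is correct and is exactly the route the paper intends: the corollary is stated with no proof because it is regarded as an immediate consequence of Dickson's theorem, and your write-up simply spells out that deduction (base-$p$ digit bookkeeping plus the vanishing convention for mismatched tops). Nothing is missing.
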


\section{Frobenius powers}\label{s: frobenius}

\subsection{Integral powers}

Throughout this section, $\ideala$ is an ideal of a ring $R$ of prime characteristic $p$.

\begin{notation}
	Given $k\in \NN$ and $q$, a power of $p$, $\ideala^{k[q]}$ denotes the ideal $\frob{\big(\ideala^k\big)}{q}$ or, equivalently, 
		$\big(\frob{\ideala}{q}\big)^k$.
\end{notation}

\begin{defn}[Integral Frobenius power]
	Let $k\in \NN$, and write the base $p$ expansion of $k$ as follows: $k=k_0+k_1p+\cdots+k_rp^r$.
	Then the $k^\textrm{th}$ \emph{Frobenius power} of $\ideala$ is the ideal
		\[\frob{\ideala}{k}\coloneqq \ideala^{k_0}\ideala^{k_1[p]}\cdots \ideala^{k_r[p^r]}.\]
	If $k$ is a power of $p$, this agrees with the standard definition of Frobenius power.
\end{defn}

\begin{exmp}
	Since $p^s-1=(p-1)+(p-1)p+\cdots+(p-1)p^{s-1}$, we have
		\[\frob{\ideala}{p^s-1}=\ideala^{p-1}\ideala^{(p-1)[p]}\cdots \ideala^{(p-1)[p^{s-1}]}.\]
\end{exmp}

\begin{rem}\label{rem: characterization of Frobenius powers}
	The function $k\mapsto \frob{\ideala}{k}$ is the unique function $\idealfct$ from $\NN$ to the set of ideals of $R$ 
		satisfying the following properties\textup:
		\begin{enumerate}[(1)]
			\item\label{item: 0th power} 
				$\idealfct(0)=\ideal{1}$\textup;
			\item\label{item: k+pl}	
				$\idealfct(k+pl)=\ideala^k\cdot \frob{\idealfct(l)}{p}$, for each $k,l\in \NN$ with $k<p$.
		\end{enumerate}
\end{rem}

\begin{prop}[Basic properties of integral Frobenius powers]\label{prop: basic properties or integer powers}
	Let $\ideala$ and $\idealb$ be ideals of $R$, and $k,l\in \NN$.
	Then the following properties hold.
	\begin{enumerate}[(1)]
		\item\label{item: integral power vs regular} 
			$\frob{\ideala}{k}\subseteq \ideala^k$, and equality holds if $\ideala$ is principal.
		\item\label{item: integral power vs product} 
			$\frob{(\ideala\idealb)}{k}=\frob{\ideala}{k}\frob{\idealb}{k}$.  
		\item\label{item: integral power vs containment} 
			If $\ideala\subseteq \idealb$, then $\frob{\ideala}{k}\subseteq\frob{\idealb}{k}$\textup;
				consequently, $\frob{\ideala}{k}+\frob{\idealb}{k}\subseteq \frob{(\ideala+\idealb)}{k}$ and
				$\frob{(\ideala\cap \idealb)}{k}\subseteq \frob{\ideala}{k}\cap\frob{\idealb}{k}$.  
		\item\label{item: products of powers} 
			$\frob{\ideala}{k+l}\subseteq \frob{\ideala}{k}\frob{\ideala}{l}$, 
				and equality holds if $k$ and $l$ add without carrying \textup(base $p$\textup).
		\item\label{item: monotonicity of integral powers}
		 	If $k>l$, then $\frob{\ideala}{k}\subseteq \frob{\ideala}{l}$.  
		\item\label{item: powers of powers}
		 	$\frob{\ideala}{kl}\subseteq \frob{\big(\frob{\ideala}{k}\big)}{l}=\frob{\big(\frob{\ideala}{l}\big)}{k}$, 
				and the containment becomes an equality if one of the numbers $k$ and $l$ is a power of $p$.
		\item\label{item: F powers of regular powers}  
			$\frob{\ideala}{kl}\subseteq\big(\frob{\ideala}{k}\big)^l= \frob{\big(\ideala^l\big)}{k}$.
	\end{enumerate}
\end{prop}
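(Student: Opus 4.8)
The plan is to read off most parts directly from the definition $\frob{\ideala}{k}=\ideala^{k_0}\ideala^{k_1[p]}\cdots\ideala^{k_r[p^r]}$, using only elementary facts about ordinary Frobenius powers: for ideals $\idealc,\idealc'$ of $R$, $a\in\NN$, and powers $q,q'$ of $p$, one has $(\idealc\idealc')^{[q]}=\idealc^{[q]}\idealc'^{[q]}$, $(\idealc^a)^{[q]}=(\idealc^{[q]})^a$, $(\idealc^{[q]})^{[q']}=\idealc^{[qq']}$, $\idealc^{[p]}\subseteq\idealc^p$, and $\idealc\subseteq\idealc'$ forces $\idealc^{[q]}\subseteq\idealc'^{[q]}$. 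Parts~\eqref{item: integral power vs product} and~\eqref{item: integral power vs containment} follow by applying the first, respectively the last, of these facts to each factor of the defining product; the two displayed consequences of~\eqref{item: integral power vs containment} come from applying~\eqref{item: integral power vs containment} to $\ideala,\idealb\subseteq\ideala+\idealb$ and to $\ideala\cap\idealb\subseteq\ideala$, $\ideala\cap\idealb\subseteq\idealb$. The equality $(\frob{\ideala}{k})^l=\frob{(\ideala^l)}{k}$ of part~\eqref{item: F powers of regular powers} likewise follows by distributing $(-)^l$ over the defining product and using $(\ideala^{k_i})^l=\ideala^{lk_i}$ and $((\ideala^{k_i})^{[p^i]})^l=((\ideala^{k_i})^l)^{[p^i]}$. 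For part~\eqref{item: integral power vs regular}: in the principal case $\ideala=\ideal{f}$ the factor $\ideala^{k_i[p^i]}$ is $\ideal{f^{k_ip^i}}$ and the product collapses to $\ideal{f^k}=\ideala^k$; in general I would induct on $k$ via \Cref{rem: characterization of Frobenius powers}, writing $k=k_0+pl$ with $k_0<p$ so that $\frob{\ideala}{k}=\ideala^{k_0}(\frob{\ideala}{l})^{[p]}\subseteq\ideala^{k_0}(\ideala^l)^{[p]}\subseteq\ideala^{k_0}\ideala^{pl}=\ideala^k$.

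The heart of the proposition is part~\eqref{item: products of powers}, which I would establish by strong induction on $k+l$ via the recursive characterization in \Cref{rem: characterization of Frobenius powers}. If $k+l<p$ there is no carrying and $\frob{\ideala}{k+l}=\ideala^{k+l}=\ideala^k\ideala^l=\frob{\ideala}{k}\frob{\ideala}{l}$. Otherwise write $k=k_0+pk'$ and $l=l_0+pl'$ with $k_0,l_0<p$. If $k_0+l_0<p$, then $\frob{\ideala}{k+l}=\ideala^{k_0+l_0}(\frob{\ideala}{k'+l'})^{[p]}$, and the inductive hypothesis $\frob{\ideala}{k'+l'}\subseteq\frob{\ideala}{k'}\frob{\ideala}{l'}$ together with multiplicativity of $(-)^{[p]}$ gives the claim after regrouping the resulting product. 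If instead $k_0+l_0=p+c$ with $0\le c<p$, then $k+l=c+p(k'+l'+1)$, so $\frob{\ideala}{k+l}=\ideala^c(\frob{\ideala}{k'+l'+1})^{[p]}$; two applications of the inductive hypothesis give $\frob{\ideala}{k'+l'+1}\subseteq\frob{\ideala}{k'}\frob{\ideala}{l'}\ideala$, and then $\ideala^{[p]}\subseteq\ideala^p$ and $\ideala^c\ideala^p=\ideala^{k_0}\ideala^{l_0}$ let the ideal $\ideala^c(\frob{\ideala}{k'})^{[p]}(\frob{\ideala}{l'})^{[p]}\ideala^{[p]}$ be regrouped as $\frob{\ideala}{k}\frob{\ideala}{l}$. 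The equality when $k$ and $l$ add without carrying is the direct computation already made, since then each digit of $k+l$ is $k_i+l_i<p$. I expect the carrying case — keeping the regroupings honest there — to be the only real technical point; the rest is bookkeeping.

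The remaining items cascade from part~\eqref{item: products of powers}. For~\eqref{item: monotonicity of integral powers}, $\frob{\ideala}{k}=\frob{\ideala}{l+(k-l)}\subseteq\frob{\ideala}{l}\frob{\ideala}{k-l}\subseteq\frob{\ideala}{l}$. Iterating~\eqref{item: products of powers} gives $\frob{\ideala}{\sum_m a_m}\subseteq\prod_m\frob{\ideala}{a_m}$ for any finite family $(a_m)$ in $\NN$; applied to $l$ copies of $k$ this yields $\frob{\ideala}{kl}\subseteq(\frob{\ideala}{k})^l$, finishing~\eqref{item: F powers of regular powers} in view of its equality part. For~\eqref{item: powers of powers}, expanding the outer Frobenius power and invoking~\eqref{item: F powers of regular powers} gives, with $k=\sum_ik_ip^i$ and $l=\sum_jl_jp^j$, the formula $\frob{(\frob{\ideala}{k})}{l}=\prod_{i,j}(\ideala^{k_il_j})^{[p^{i+j}]}$, which is manifestly symmetric in $k$ and $l$ and hence equals $\frob{(\frob{\ideala}{l})}{k}$. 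Writing $kl=\sum_{i,j}(k_il_j)p^{i+j}$, iterating~\eqref{item: products of powers}, and noting $\frob{\ideala}{(k_il_j)p^{i+j}}=(\frob{\ideala}{k_il_j})^{[p^{i+j}]}\subseteq(\ideala^{k_il_j})^{[p^{i+j}]}$ (the base-$p$ expansion of $mp^s$ being that of $m$ shifted $s$ places, and the containment by~\eqref{item: integral power vs regular}) then give $\frob{\ideala}{kl}\subseteq\frob{(\frob{\ideala}{k})}{l}$. Finally, when $l=p^s$ the product formula collapses to $\prod_i(\ideala^{k_i})^{[p^{i+s}]}$, which is exactly the defining product of $\frob{\ideala}{kp^s}=\frob{\ideala}{kl}$, so the containment is an equality in that case, and symmetrically when $k$ is a power of $p$.
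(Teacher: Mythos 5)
Your proposal is correct, and for the bulk of the proposition it follows the paper's own argument: items (1)--(3) and the equality in (7) are read off from the defining product exactly as in the paper, and your proof of the key item (4) — strong induction on $k+l$ via the recursion of \Cref{rem: characterization of Frobenius powers}, split according to whether the units digits carry, with two applications of the inductive hypothesis and $\ideala^{[p]}\subseteq\ideala^p$ in the carrying case — is the paper's proof verbatim in substance. Items (5) and the containment in (7) are likewise derived from (4) in the same way.

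The one place you genuinely diverge is item (6). The paper proves $\frob{\ideala}{kl}\subseteq\frob{\big(\frob{\ideala}{k}\big)}{l}$ by induction on $l$ (first for $l<p$ via repeated use of (4), then splitting $l=l_1+pl_2$), and dismisses the symmetry and the power-of-$p$ equality as immediate from the definition. You instead establish the explicit closed form $\frob{\big(\frob{\ideala}{k}\big)}{l}=\prod_{i,j}(\ideala^{k_il_j})^{[p^{i+j}]}$, from which the symmetry in $k$ and $l$ and the collapse when $l=p^s$ are transparent, and you obtain the containment by writing $kl=\sum_{i,j}k_il_jp^{i+j}$ and iterating (4) over this (possibly carrying) decomposition, using that $\frob{\ideala}{mp^s}=\big(\frob{\ideala}{m}\big)^{[p^s]}$ and item (1). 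Both routes are sound and rest on the same ingredients; your product formula makes the symmetry and the equality cases self-evident at the cost of heavier indexing, while the paper's induction keeps the bookkeeping lighter but leaves the "immediate from the definition" claims to the reader. No gaps.
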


\begin{proof}
	\begin{itemize}[wide=0pt]
		\item[\iref{item: integral power vs regular}--\iref{item: integral power vs containment}] follow directly from the definition of Frobenius powers.
		\item[\iref{item: products of powers}]
			That equality holds if $k$ and $l$ add without carrying (base $p$) is immediate from the definition of Frobenius powers.
			In general, write $k=k_1+pk_2$ and $l=l_1+pl_2$, where $0\le k_1,l_1<p$.
			If $k_1+l_1<p$, then \Cref{rem: characterization of Frobenius powers} shows that
				\[\frob{\ideala}{k+l}=\frob{\ideala}{k_1+l_1+p(k_2+l_2)}=\ideala^{k_1+l_1}\frob{\big(\frob{\ideala}{k_2+l_2}\big)}{p}.\]
			Induction on the sum $k+l$ allows us to assume that 
				$\frob{\ideala}{k_2+l_2}\subseteq \frob{\ideala}{k_2}\frob{\ideala}{l_2}$, and the desired containment follows easily.
			If $k_1+l_1\ge p$, then 
				\[\frob{\ideala}{k+l}=\frob{\ideala}{k_1+l_1-p+p(k_2+l_2+1)}=\ideala^{k_1+l_1-p}\frob{\big(\frob{\ideala}{k_2+l_2+1}\big)}{p}.\]
			Again, we may assume that $\frob{\ideala}{k_2+l_2+1}\subseteq \frob{\ideala}{k_2}\frob{\ideala}{l_2}\ideala$, so that
				\[
					\frob{\big(\frob{\ideala}{k_2+l_2+1}\big)}{p}
						\subseteq \frob{\ideala}{pk_2}\frob{\ideala}{pl_2}\frob{\ideala}{p}
						\subseteq \frob{\ideala}{pk_2}\frob{\ideala}{pl_2}\ideala^p,
				\]
				and the desired containment follows. 
		\item[\iref{item: monotonicity of integral powers}] 
			is a consequence of~\iref{item: products of powers}: if $k>l$, then 
				$\frob{\ideala}{k}\subseteq \frob{\ideala}{k-l}\frob{\ideala}{l}\subseteq \frob{\ideala}{l}$.
		\item[\iref{item: powers of powers}]
			The facts that $\frob{\big(\frob{\ideala}{k}\big)}{l}=\frob{\big(\frob{\ideala}{l}\big)}{k}$, and that this equals 
				$\frob{\ideala}{kl}$ when one of $k$ and $l$ is a power of $p$, follow immediately from the definition
				of Frobenius powers. 
			If $l<p$, repeated applications of~\iref{item: products of powers}
				gives us $\frob{\ideala}{kl}\subseteq (\frob{\ideala}{k})^l=\frob{\big(\frob{\ideala}{k}\big)}{l}$.
			If $l\ge p$, write $l=l_1+pl_2$, with $0\le l_1<p$.
			Then $\frob{\ideala}{kl}=\frob{\ideala}{kl_1+pkl_2}\subseteq \frob{\ideala}{kl_1}\frob{\big(\frob{\ideala}{kl_2}\big)}{p}$,
				by~\iref{item: products of powers}.
			The case already proven shows that $\frob{\ideala}{kl_1}\subseteq \frob{\big(\frob{\ideala}{k}\big)}{l_1}$, and 
				induction on~$l$ allows us to assume that 
				 $\frob{\ideala}{kl_2}\subseteq \frob{\big(\frob{\ideala}{k}\big)}{l_2}$;
				 the desired containment follows.
		\item[\iref{item: F powers of regular powers}]
			The containment $\frob{\ideala}{kl}\subseteq\big(\frob{\ideala}{k}\big)^l$ follows from~\iref{item: powers of powers} 
				and~\iref{item: integral power vs regular}, 
				or repeated applications of~\iref{item: products of powers}; the identity 
					$\big(\frob{\ideala}{k}\big)^l=\frob{\big(\ideala^l\big)}{k}$ follows from~\iref{item: integral power vs product}.
			\qedhere
	\end{itemize}
\end{proof}

\begin{prop}[Frobenius powers in terms of generators]\label{prop: generators for Frobenius powers} 
	Let $\ideala$ be an ideal of $R$, and let $\mathcal{A} \subseteq \ideala$ be a set of generators for~$\ideala$.
	Then the Frobenius power $\frob{\ideala}{k}$ 
		is generated by all products 
		$f^{\vv{u}}\coloneqq f_1^{u_1}\cdots f_m^{u_m}$, where $m \geq 1$, $f_i\in \mathcal{A} $, and $\vv{u}\in \NN^m$ is such that 
		$\norm{\vv{u}} = k$ and $\binom{k}{\vv{u}}\not\equiv 0\bmod{p}$.
\end{prop}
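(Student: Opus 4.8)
The plan is to establish the two inclusions between $\frob{\ideala}{k}$ and the ideal $J$ generated by the claimed products $f^{\vv{u}}$, working directly from the definition $\frob{\ideala}{k}=\ideala^{k_0}\ideala^{k_1[p]}\cdots\ideala^{k_r[p^r]}$ (here $k=k_0+k_1p+\cdots+k_rp^r$ is the base-$p$ expansion of $k$), and using Dickson's theorem to translate the condition $\binom{k}{\vv{u}}\not\equiv 0\bmod p$ into the statement that the base-$p$ digit vectors $\vv{u}_0,\dots,\vv{u}_r$ of $\vv{u}$ satisfy $\norm{\vv{u}_i}=k_i$ for every $i$.

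For $J\subseteq\frob{\ideala}{k}$: given a generator $f^{\vv{u}}$ of $J$, with entries $f_i\in\mathcal A$ and $\vv{u}\in\NN^m$ satisfying $\norm{\vv{u}}=k$ and $\binom{k}{\vv{u}}\not\equiv 0\bmod p$, I would expand $\vv{u}=\vv{u}_0+p\vv{u}_1+\cdots+p^r\vv{u}_r$ with each $\vv{u}_i<p\cdot\vv{1}$, invoke Dickson's theorem to get $\norm{\vv{u}_i}=k_i$, and observe that $f^{\vv{u}}=\prod_{i=0}^r\bigl(f^{\vv{u}_i}\bigr)^{p^i}$. Since $f^{\vv{u}_i}$ is a product of $\norm{\vv{u}_i}=k_i$ elements of $\mathcal A\subseteq\ideala$, it lies in $\ideala^{k_i}$, whence $\bigl(f^{\vv{u}_i}\bigr)^{p^i}\in\ideala^{k_i[p^i]}$; multiplying over $i$ places $f^{\vv{u}}$ in $\frob{\ideala}{k}$.

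For the reverse inclusion: each $\ideala^{k_i}$ is generated by the monomials $f^{\vv{w}}$ with entries in $\mathcal A$ and $\norm{\vv{w}}=k_i$, so --- since a Frobenius power $\frob{I}{q}$ is generated by the $q$-th powers of any generating set of $I$, which is immediate from its definition together with the additivity of $q$-th powers in characteristic $p$ --- the ideal $\ideala^{k_i[p^i]}$ is generated by the elements $f^{p^i\vv{w}}$. Hence $\frob{\ideala}{k}$ is generated by the products formed by choosing one such monomial from each of the $r+1$ factors. I would fix such a product, amalgamate the finitely many tuples of generators appearing in it into a single tuple $g=(g_1,\dots,g_N)$ of elements of $\mathcal A$ by inserting exponent-zero slots (which preserves the taxicab norms involved), and rewrite the product as $g^{\vv{u}}$ with $\vv{u}=\sum_i p^i\vv{w}_i'$, where $\vv{w}_i'<p\cdot\vv{1}$ because $\norm{\vv{w}_i'}=k_i<p$. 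The $\vv{w}_i'$ are then precisely the base-$p$ digit vectors of $\vv{u}$, so $\norm{\vv{u}}=\sum_i p^ik_i=k$, and Dickson's theorem gives $\binom{k}{\vv{u}}\not\equiv 0\bmod p$ since $\norm{\vv{w}_i'}=k_i$ for all $i$; thus $g^{\vv{u}}$ is a generator of $J$, and the product lies in $J$.

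An equally viable route is induction on $k$ via the recursive description $\frob{\ideala}{k_0+pl}=\ideala^{k_0}\cdot\frob{\bigl(\frob{\ideala}{l}\bigr)}{p}$ of \Cref{rem: characterization of Frobenius powers}, using \Cref{cor: multinomial congruence} in place of the full Dickson expansion; the inductive step then reduces to verifying $J_k=\ideala^{k_0}\bigl(J_l\bigr)^{[p]}$ for the analogously defined $J_l$, by the same digit-vector manipulation. In either approach, the only real work lies in the bookkeeping for the second inclusion --- tracking which elements of $\mathcal A$ occur in each factor and merging the index sets --- together with the correct matching of ``$\binom{k}{\vv{u}}\not\equiv 0\bmod p$'' with ``$\norm{\vv{u}_i}=k_i$ for every $i$''; I do not expect any genuine obstacle beyond this.
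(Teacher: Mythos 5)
Your proposal is correct. Your primary route — proving both inclusions directly from the product definition $\frob{\ideala}{k}=\ideala^{k_0}\ideala^{k_1[p]}\cdots\ideala^{k_r[p^r]}$ by unwinding all base-$p$ digits at once via Dickson's theorem — differs in structure from the paper's proof, which instead defines $\idealfct(k)$ to be the candidate ideal and verifies that it satisfies the two properties of \Cref{rem: characterization of Frobenius powers} (so the conclusion follows from uniqueness of the recursion), using only the single-digit congruence of \Cref{cor: multinomial congruence} rather than the full digit expansion. The two arguments rest on the same arithmetic fact, but the paper's inductive formulation lets it avoid exactly the tuple-amalgamation bookkeeping you flag as the ``only real work'' in your reverse inclusion: at each recursive step one only splits $\vv{w}=\vv{u}+p\vv{v}$ once, so the matching of generators of $\idealfct(k+pl)$ with products from $\ideala^{k}$ and $\frob{\idealfct(l)}{p}$ is immediate. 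Your direct version is fine as long as you carry out that bookkeeping carefully (keeping the index sets of the $r+1$ factors disjoint when merging guarantees each digit vector stays componentwise below $p$, as you note); your second sketched route is precisely the paper's proof.
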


\begin{proof}
	For each $k$, let $\idealfct(k)$ be the ideal generated by the products $f^{\vv{u}}$, as in the statement.
	We shall verify that $\idealfct$ satisfies properties \iref{item: 0th power} and \iref{item: k+pl} of \Cref{rem: characterization of Frobenius powers}, 
		and the result will follow.
	Property \iref{item: 0th power} is trivial, so we focus on \iref{item: k+pl}.	
	Let $k,l\in \NN$, with $k<p$. 
	The ideal $\idealfct(k+pl)$ is generated by products $f^{\vv{w}}$, where
		$\norm{\vv{w}} = k + pl$ and
		$\binom{k+pl}{\vv{w}}\not\equiv 0 \bmod{p}$.
	Writing $\vv{w}=\vv{u}+p\vv{v}$, where $\vv{0}\le \vv{u}<p \cdot \vv{1}$, we have
		\[\binom{k}{\vv{u}}\binom{l}{\vv{v}}\equiv\binom{k+pl}{\vv{u}+p\vv{v}}\not\equiv 0 \mod{p},\]
		where the first congruence comes from  \Cref{cor: multinomial congruence}.		
	The generators $f^{\vv{w}}=f^{\vv{u}+p\vv{v}}$ of $\idealfct(k+pl)$ are, therefore, products of elements in the following sets: 
		\begin{align*}
			S&=\big\{f^\vv{u}:  f_1,\ldots,f_m\in \mathcal{A}, \vv{u}\in \NN^m, \norm{\vv{u}}=k, \textstyle{\binom{k}{\vv{u}}}\not\equiv 0\bmod{p}\big\};\\
			T &=\big\{(f^\vv{v})^p: f_1,\ldots,f_m\in \mathcal{A}, \vv{v}\in \NN^m, \norm{\vv{v}}=l, \textstyle{\binom{l}{\vv{v}}}\not\equiv 0\bmod{p}\big\}.
		\end{align*}
	Because $k<p$, $\binom{k}{\vv{u}}\not\equiv 0\bmod{p}$ for all $\vv{u} \in \NN^m$ for which $\norm{\vv{u}}=k$; thus, $S$ is simply a set of generators
		for $\ideala^k$.
	Since $T$  is a set of generators for $\frob{\idealfct(l)}{p}$, we conclude that $\idealfct(k+pl)=\ideala^k\cdot\frob{\idealfct(l)}{p}$.
\end{proof}

\begin{exmp}
	If $0<k<p$ and $q$ is a power of $p$, then if $i+j=kq-1$, we always have that
		$\binom{kq-1}{i,\, j}\not\equiv 0\bmod{p}$.
	Thus, the previous proposition 
		shows that if $\ideala$ is generated by two elements, then $\frob{\ideala}{kq-1}=\ideala^{kq-1}$.	
\end{exmp}

\begin{exmp}
	Let $f\in \kk[x_1,\ldots,x_n]$, where $\kk$ is a field of characteristic $p$.
	The \emph{support} of $f$, denoted by $\supp(f)$, is the collection of monomials that appear in $f$ with a nonzero coefficient.
	Then $\ideal{\supp(f^k)}\subseteq \ideal{\supp(f)}^k$, though this containment is typically strict.
	However, in view of \Cref{prop: generators for Frobenius powers}, we have
		$\ideal{\supp(f^k)}= \frob{\ideal{\supp(f)}}{k}$.
	This fact is one of our motivations for extending the notion of Frobenius powers.
\end{exmp}

\subsection{$p$-rational powers}

Henceforth we shall assume that $R$ is an $F$-finite regular domain of prime characteristic $p$. 
This will allow us to use the theory of $[1/q]^\mathrm{th}$ powers from  \cite{blickle+mustata+smith.discr_rat_FPTs}.
If $\ideala$ is an ideal of $R$ and $q$ a power of $p$, the ideal $\frob{\ideala}{1/q}$ is the smallest ideal $\idealc$ 
	such that $\ideala\subseteq \frob{\idealc}{q}$.
The following lemma gathers the basic facts about such powers, for the reader's convenience. 

\begin{lem}[{\cite[Lemma~2.4]{blickle+mustata+smith.discr_rat_FPTs}}]\label{lem: basic facts of roots}
	Let $\ideala$ and $\idealb$ be ideals of $R$. 
	Let $q$ and $q'$ be powers of $p$, and $k\in \NN$.
	Then the following statements hold.
	\begin{enumerate}[(1)]
		\item\label{item: root vs containment} 
			If $\ideala\subseteq \idealb$, then $\frob{\ideala}{1/q}\subseteq \frob{\idealb}{1/q}$.
		\item\label{item: root vs product} 
			$\frob{(\ideala\idealb)}{1/q}\subseteq \frob{\ideala}{1/q}\frob{\idealb}{1/q}$.
		\item\label{item: root of frob power} 
			$\frob{\big(\frob{\ideala}{q'}\big)}{1/q}=\frob{\ideala}{q'/q}\subseteq \frob{\big(\frob{\ideala}{1/q}\big)}{q'}$.
		\item\label{item: root of root} 
			$\frob{\ideala}{1/(qq')}= \frob{\big(\frob{\ideala}{1/q}\big)}{1/q'}$.
		\item\label{item: root vs frob power} 
			$\frob{\big(\frob{\ideala}{k}\big)}{1/q}\subseteq \frob{\big(\frob{\ideala}{1/q}\big)}{k}$. 
	\end{enumerate}
\end{lem}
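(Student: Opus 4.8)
The plan is to derive all five parts from the single ``adjunction'' enjoyed by the $[1/q]^{\mathrm{th}}$ power: for all ideals $\ideala,\idealc$ of $R$ and every power $q$ of $p$,
\[
\ideala\subseteq\frob{\idealc}{q}\quad\Longleftrightarrow\quad\frob{\ideala}{1/q}\subseteq\idealc .
\]
The forward implication is precisely the minimality in the definition of $\frob{\ideala}{1/q}$ recalled just above; the reverse follows by applying $(-)^{[q]}$ to $\frob{\ideala}{1/q}\subseteq\idealc$ (monotonicity of Frobenius powers) and composing with $\ideala\subseteq\frob{\big(\frob{\ideala}{1/q}\big)}{q}$, which is again part of the definition. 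I will take the existence of $\frob{\ideala}{1/q}$ for granted (it is the content of the definition recalled above, valid since $R$ is regular and $F$-finite), and the only further external fact I need is that the Frobenius endomorphism of the regular ring $R$ is faithfully flat (Kunz), so that $\frob{\idealc}{q}\subseteq\frob{\idealb}{q}$ forces $\idealc\subseteq\idealb$. Finally, since $q$ is a power of $p$, part~\iref{item: powers of powers} of \Cref{prop: basic properties or integer powers} gives $\frob{\big(\frob{\idealc}{q}\big)}{k}=\frob{\idealc}{kq}=\frob{\big(\frob{\idealc}{k}\big)}{q}$ for every $k\in\NN$, which I use freely below.

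Granting the adjunction, parts \iref{item: root vs containment}, \iref{item: root vs product} and \iref{item: root vs frob power} are each obtained by placing the relevant ideal inside an explicit $q^{\mathrm{th}}$ Frobenius power and reading off the conclusion. For \iref{item: root vs containment}: $\ideala\subseteq\idealb\subseteq\frob{\big(\frob{\idealb}{1/q}\big)}{q}$ gives $\frob{\ideala}{1/q}\subseteq\frob{\idealb}{1/q}$. For \iref{item: root vs product}, using \iref{item: integral power vs product} of \Cref{prop: basic properties or integer powers},
\[
\ideala\idealb\subseteq\frob{\big(\frob{\ideala}{1/q}\big)}{q}\,\frob{\big(\frob{\idealb}{1/q}\big)}{q}=\frob{\big(\frob{\ideala}{1/q}\,\frob{\idealb}{1/q}\big)}{q},
\]
whence $\frob{(\ideala\idealb)}{1/q}\subseteq\frob{\ideala}{1/q}\,\frob{\idealb}{1/q}$. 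For \iref{item: root vs frob power}: $\frob{\ideala}{k}\subseteq\frob{\big(\frob{\big(\frob{\ideala}{1/q}\big)}{q}\big)}{k}=\frob{\big(\frob{\big(\frob{\ideala}{1/q}\big)}{k}\big)}{q}$ gives $\frob{\big(\frob{\ideala}{k}\big)}{1/q}\subseteq\frob{\big(\frob{\ideala}{1/q}\big)}{k}$. Part \iref{item: root of root} I would prove by iterating the adjunction: for every ideal $\idealc$ one has the equivalences $\ideala\subseteq\frob{\idealc}{qq'}=\frob{\big(\frob{\idealc}{q'}\big)}{q}\iff\frob{\ideala}{1/q}\subseteq\frob{\idealc}{q'}\iff\frob{\big(\frob{\ideala}{1/q}\big)}{1/q'}\subseteq\idealc$, so $\frob{\big(\frob{\ideala}{1/q}\big)}{1/q'}$ is the smallest ideal whose $(qq')^{\mathrm{th}}$ Frobenius power contains $\ideala$, i.e.\ it equals $\frob{\ideala}{1/(qq')}$.

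Part \iref{item: root of frob power} needs a little more. Its containment $\frob{\big(\frob{\ideala}{q'}\big)}{1/q}\subseteq\frob{\big(\frob{\ideala}{1/q}\big)}{q'}$ again comes straight from the adjunction, applied to $\frob{\ideala}{q'}\subseteq\frob{\big(\frob{\big(\frob{\ideala}{1/q}\big)}{q}\big)}{q'}=\frob{\big(\frob{\big(\frob{\ideala}{1/q}\big)}{q'}\big)}{q}$. The equality $\frob{\big(\frob{\ideala}{q'}\big)}{1/q}=\frob{\ideala}{q'/q}$ is the real point; for it I would first record the identity $\frob{\big(\frob{\idealc}{q_0}\big)}{1/q_0}=\idealc$ for any power $q_0$ of $p$, where ``$\subseteq$'' is the adjunction applied with $\ideala=\frob{\idealc}{q_0}$ and ``$\supseteq$'' comes from feeding $\frob{\idealc}{q_0}\subseteq\frob{\big(\frob{\big(\frob{\idealc}{q_0}\big)}{1/q_0}\big)}{q_0}$ into faithful flatness. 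Writing $q=p^a$ and $q'=p^b$, one of $q\mid q'$ or $q'\mid q$ holds. If $q'=qq_0$, then $\frob{\ideala}{q'}=\frob{\big(\frob{\ideala}{q_0}\big)}{q}$, so the identity gives $\frob{\big(\frob{\ideala}{q'}\big)}{1/q}=\frob{\ideala}{q_0}=\frob{\ideala}{q'/q}$; if $q=q'q_0$, then by \iref{item: root of root} and the identity, $\frob{\big(\frob{\ideala}{q'}\big)}{1/q}=\frob{\big(\frob{\big(\frob{\ideala}{q'}\big)}{1/q'}\big)}{1/q_0}=\frob{\ideala}{1/q_0}=\frob{\ideala}{q'/q}$.

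I do not expect any individual step to be hard; the only thing to watch is the order of dependence. Parts \iref{item: root vs containment}--\iref{item: root of root} are purely formal consequences of the adjunction (they record that $(-)^{[1/q]}$ is a left adjoint of $(-)^{[q]}$ on the poset of ideals, compatible with products and with integral Frobenius powers), while part \iref{item: root of frob power} additionally invokes part \iref{item: root of root} (in the case $q'\mid q$) together with the identity $\frob{\big(\frob{\idealc}{q_0}\big)}{1/q_0}=\idealc$. That identity, and behind it the very existence of $\frob{\ideala}{1/q}$, are the two places where regularity of $R$ is genuinely used rather than formal manipulation; everything else is routine bookkeeping with Frobenius powers, drawing on \Cref{prop: basic properties or integer powers}.
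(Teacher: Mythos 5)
Your proof is correct, and it is considerably more self-contained than what the paper actually writes down: the paper proves only part \iref{item: root vs frob power} and the containment $\frob{\big(\frob{\ideala}{1/q}\big)}{1/q'}\subseteq\frob{\ideala}{1/(qq')}$ in part \iref{item: root of root}, deferring everything else to the cited Lemma~2.4 of Blickle--Musta\c{t}\u{a}--Smith. Where the arguments overlap they are in the same spirit---the paper's half of \iref{item: root of root} is precisely one application of your adjunction, via the containment in \iref{item: root of frob power} and minimality---but your organization is different: you isolate the Galois connection $\ideala\subseteq\frob{\idealc}{q}\iff\frob{\ideala}{1/q}\subseteq\idealc$ once and read off \iref{item: root vs containment}, \iref{item: root vs product}, \iref{item: root of root}, \iref{item: root vs frob power}, and the containment in \iref{item: root of frob power} as formal consequences, whereas the paper obtains \iref{item: root vs frob power} by unwinding the base-$p$ definition of $\frob{\ideala}{k}$ and repeatedly applying \iref{item: root vs product} and \iref{item: root of frob power}. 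The one genuinely non-formal point, the equality $\frob{\big(\frob{\ideala}{q'}\big)}{1/q}=\frob{\ideala}{q'/q}$, you handle correctly through the identity $\frob{\big(\frob{\idealc}{q_0}\big)}{1/q_0}=\idealc$ and the case split $q\mid q'$ versus $q'\mid q$; your appeal to flatness of Frobenius (Kunz) for the containment $\idealc\subseteq\frob{\big(\frob{\idealc}{q_0}\big)}{1/q_0}$ is exactly the regularity input the source uses, and the paper itself invokes the same flatness elsewhere (in the proof of right constancy). In short, your route buys a complete verification of all five parts at the cost of length; no step fails.
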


\begin{proof}
	We prove \iref{item: root vs frob power} and the containment ``$\supseteq$'' in  \iref{item: root of root}; 
		proofs of the other properties can
		be found in \cite[Lemma~2.4]{blickle+mustata+smith.discr_rat_FPTs}.
	Property~\iref{item: root vs frob power} follows from the definition of $\frob{\ideala}{k}$ and repeated applications 
		of \iref{item: root vs product} and \iref{item: root of frob power}.		
	As for the reverse containment in \iref{item: root of root}, note that by  
		\iref{item: root of frob power} we have $\frob{\ideala}{1/q} =\frob{\ideala}{q'/(qq')}\subseteq \frob{\big(\frob{\ideala}{1/(qq')}\big)}{q'}$, 
		thus $\frob{\big(\frob{\ideala}{1/q}\big)}{1/q'}\subseteq \frob{\ideala}{1/(qq')}$, by the minimality of  $\frob{\big(\frob{\ideala}{1/q}\big)}{1/q'}$.
\end{proof}
	
\begin{defn}[$p$-rational Frobenius powers]
	Let $\ideala$ be an ideal of $R$.
	For each $k/q\in \infint$, we define
		\[\frob{\ideala}{k/q}\coloneqq \frob{\big(\frob{\ideala}{k}\big)}{1/q}.\]
\end{defn}
\noindent
Note that this definition is independent of the representation of $k/q$, since
	\[\frob{\big(\frob{\ideala}{pk}\big)}{1/(pq)}=\frob{\Big(\frob{\big(\frob{\ideala}{k}\big)}{p}\Big)}{1/(pq)}=\frob{\big(\frob{\ideala}{k}\big)}{1/q},\]
	where the first equality follows from \Cref{prop: basic properties or integer powers}\iref{item: powers of powers}, and the second from \Cref{lem: basic facts of roots}\iref{item: root of frob power}.
In particular, if $k/q$ is an integer, this coincides with the earlier definition.

\begin{lem}\label{lem: basic properties of rational powers} 
	Let $\ideala$ be a nonzero ideal of $R$ and $c,c'\in \infint$.
	\begin{enumerate}[(1)]	
		\item	\label{item: monotonicity} \textbf{\emph{(Monotonicity)}} 
			If $c'> c$, then $\frob{\ideala}{c'}\subseteq \frob{\ideala}{c}$.
		\item	\label{item: right continuity} \textbf{\emph{(Right constancy)}} 
			$\frob{\ideala}{c}= \frob{\ideala}{c'}$, for each $c'\ge c$ sufficiently close to $c$.
	\end{enumerate}
\end{lem}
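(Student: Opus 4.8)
The plan is to prove monotonicity first, since right constancy will follow from it together with a stabilization argument. For \iref{item: monotonicity}, write $c = k/q$ and $c' = k'/q$ over a common denominator $q$ (a power of $p$), so that $c' > c$ means $k' > k$. Then $\frob{\ideala}{c'} = \frob{(\frob{\ideala}{k'})}{1/q}$ and $\frob{\ideala}{c} = \frob{(\frob{\ideala}{k})}{1/q}$. By \Cref{prop: basic properties or integer powers}\iref{item: monotonicity of integral powers} we have $\frob{\ideala}{k'} \subseteq \frob{\ideala}{k}$, and then \Cref{lem: basic facts of roots}\iref{item: root vs containment} gives $\frob{(\frob{\ideala}{k'})}{1/q} \subseteq \frob{(\frob{\ideala}{k})}{1/q}$, which is the claim. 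This part is routine.

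For \iref{item: right continuity}, the idea is that the family $\{\frob{\ideala}{c'} : c' \ge c,\ c' \in \infint\}$ is a descending chain of ideals (by monotonicity) in the Noetherian ring $R$, so it stabilizes; the content is to show that it stabilizes already on a set of $c'$ that accumulates at $c$ from above — equivalently, that $\frob{\ideala}{c}$ itself already equals $\frob{\ideala}{c'}$ for $c'$ slightly larger, rather than only becoming constant at some $c'$ bounded away from $c$. The key move is to pass to a large power of $p$: fix $q$ and write $c = k/q$. For a parameter of the form $c_n = (kp^n + 1)/(qp^n)$, which decreases to $c$ as $n \to \infty$, I would compute
\[
\frob{\ideala}{c_n} = \frob{\big(\frob{\ideala}{kp^n + 1}\big)}{1/(qp^n)} = \frob{\big(\ideala \cdot \frob{(\frob{\ideala}{k})}{p^n}\big)}{1/(qp^n)},
\]
using \Cref{rem: characterization of Frobenius powers}\iref{item: k+pl} (valid since $1 < p$) for the inner Frobenius power. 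Applying \Cref{lem: basic facts of roots}\iref{item: root vs product}, \iref{item: root of root}, and \iref{item: root of frob power} then bounds this between $\frob{\ideala}{c}$ and $\frob{(\frob{\ideala}{1/(qp^n)} \cdot \frob{\ideala}{k})}{1/q}$; as $n$ grows, $\frob{\ideala}{1/(qp^n)}$ increases (by monotonicity, or directly from the minimality definition) toward a fixed ideal, so these upper bounds form an ascending chain that must stabilize, forcing $\frob{\ideala}{c_n} = \frob{\ideala}{c}$ for $n \gg 0$. Since any $c' \in \infint$ with $c < c' < c_n$ satisfies $\frob{\ideala}{c} \supseteq \frob{\ideala}{c'} \supseteq \frob{\ideala}{c_n} = \frob{\ideala}{c}$ by monotonicity, the ideal is constant on the whole interval $(c, c_n)$, giving right constancy.

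The main obstacle is precisely this last stabilization step: a priori the descending chain $\frob{\ideala}{c'}$ could drop strictly at every $p$-rational $c'$ approaching $c$, and ruling that out requires genuinely using the Frobenius structure — specifically that shrinking the denominator only by a factor of $p^n$ while adding $1$ to the numerator produces, after taking $[1/(qp^n)]^{\mathrm{th}}$ powers, an ideal squeezed between $\frob{\ideala}{c}$ and something controlled by the stabilizing chain $\frob{\ideala}{1/(qp^n)}$. Getting the inclusions to point the right way, and confirming that $\bigcup_n \frob{\ideala}{1/(qp^n)}$ is a genuine ideal of $R$ (hence finitely generated, hence the chain stabilizes), is where care is needed; everything else is a direct application of \Cref{lem: basic facts of roots} and \Cref{prop: basic properties or integer powers}.
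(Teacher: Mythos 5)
Part \iref{item: monotonicity} is correct and is exactly the paper's argument. Part \iref{item: right continuity} has the right skeleton but a genuine gap at the decisive step. Your setup is fine: with $c_n = (kp^n+1)/(qp^n)$ one has $\frob{\ideala}{kp^n+1} = \ideala\cdot\frob{\big(\frob{\ideala}{k}\big)}{p^n}$, and in fact \Cref{lem: basic facts of roots}\iref{item: root of root} together with \Cref{lem: qth roots vs. products} give the \emph{exact} identity $\frob{\ideala}{c_n} = \frob{\big(\frob{\ideala}{1/p^n}\cdot \frob{\ideala}{k}\big)}{1/q}$, which is cleaner than the sandwich of containments you describe. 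The problem is the sentence ``these upper bounds form an ascending chain that must stabilize, forcing $\frob{\ideala}{c_n} = \frob{\ideala}{c}$ for $n \gg 0$.'' That is a non sequitur: an ascending chain contained in $\frob{\ideala}{c}$ stabilizes at \emph{some} ideal $\idealb \subseteq \frob{\ideala}{c}$, but nothing you have said rules out $\idealb \subsetneq \frob{\ideala}{c}$ --- and ruling that out is the entire content of right constancy. Your closing paragraph locates the difficulty in checking that $\bigcup_n \frob{\ideala}{1/(qp^n)}$ is an ideal and that the chain stabilizes, but both are immediate from Noetherianity; the real issue is identifying \emph{what} the chain stabilizes to.

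To close the gap in your formulation, you must show $\frob{\ideala}{1/p^n} = R$ for $n \gg 0$, so that the exact identity above collapses to $\frob{\ideala}{c_n} = \frob{\big(\frob{\ideala}{k}\big)}{1/q} = \frob{\ideala}{c}$. This is precisely where the hypothesis that $\ideala$ is nonzero must enter (your argument never uses it, yet right constancy genuinely fails at $c=0$ for the zero ideal, as noted in the paper): if the stabilized ideal $\idealc = \bigcup_n \frob{\ideala}{1/p^n}$ were proper, then $\ideala \subseteq \frob{\idealc}{p^n} \subseteq \idealc^{p^n}$ for every $n$, whence $\ideala = \ideal{0}$ by the Krull Intersection Theorem, a contradiction. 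The paper's proof performs the same maneuver in a slightly different packaging: it shows $\ideala \subseteq \frob{\big(\frob{\idealb}{q_0}:\frob{\ideala}{cq_0}\big)}{q}$ for all $q$ using flatness of Frobenius, and then invokes Krull intersection to force $\frob{\ideala}{cq_0} \subseteq \frob{\idealb}{q_0}$. With the Krull-intersection step added, your route via \Cref{lem: qth roots vs. products} is a valid (and arguably tidier) alternative; without it, the proof is incomplete at its crux.
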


\begin{proof}
	To prove \iref{item: monotonicity}, we may assume that $c$ and $c'$ have the same denominator $q$, 
		and use \Cref{prop: basic properties or integer powers}\iref{item: monotonicity of integral powers}
		and  \Cref{lem: basic facts of roots}\iref{item: root vs containment}. 
	As for~\iref{item: right continuity}, note that if $\frob{\ideala}{c+1/q}=\frob{\ideala}{c}$, for some $q$, then 
		$\frob{\ideala}{c}= \frob{\ideala}{c'}$ for each $c'\in \infint$ with $c\le c' \le c+1/q$, by \iref{item: monotonicity}.
	Thus, it suffices to prove the following claim:
	\begin{equation} \label{eqn: epsilon claim}
		\frob{\ideala}{c+1/q}= \frob{\ideala}{c} \text{ for some } q.
	\end{equation}	
	As $q$ increases, the ideals $\frob{\ideala}{c+1/q}$ form an ascending chain of ideals, which must eventually stabilize at some ideal $\idealb$. 
	We shall prove that $\frob{\ideala}{c}=\idealb$.
	Fix $q_0$ sufficiently large so that $cq_0\in \NN$ and $\frob{\ideala}{c+1/q_0}=\idealb$. 
	For each $q>1$ we have $\frob{\big(\frob{\ideala}{cqq_0+1}\big)}{1/(qq_0)}=\frob{\ideala}{c+1/(qq_0)}=\idealb$, so 
		\begin{equation*}
			\ideala\cdot\frob{\ideala}{cqq_0}=\frob{\ideala}{cqq_0+1}\subseteq \frob{\idealb}{qq_0}.
		\end{equation*}
	This shows that
		\[\ideala\subseteq  \big(\frob{\idealb}{qq_0}:\frob{\ideala}{cqq_0}\big)=\frob{\big(\frob{\idealb}{q_0}:\frob{\ideala}{cq_0}\big)}{q}\]
		for each $q>1$, where the equality is a consequence of the flatness of the Frobenius over $R$.
	If $\frob{\ideala}{cq_0}\not\subseteq\frob{\idealb}{q_0}$, the intersection of all 
		$\frob{\big(\frob{\idealb}{q_0}:\frob{\ideala}{cq_0}\big)}{q}$ would be the zero ideal
		by the Krull Intersection Theorem, contradicting the assumption that $\ideala$ is nonzero.
	Hence $\frob{\ideala}{cq_0}\subseteq \frob{\idealb}{q_0}$, and therefore 
		$\frob{\ideala}{c}=\frob{\big(\frob{\ideala}{cq_0}\big)}{1/q_0}\subseteq \idealb$.
	The reverse containment follows from \iref{item: monotonicity}, and \eqref{eqn: epsilon claim} holds.
\end{proof}

We close this subsection with the following lemma, which will be useful in proving properties of real Frobenius powers.

\begin{lem}\label{lem: qth roots vs. products}
	Let $\ideala$ and $\idealb$ be ideals of $R$. 
	Then $\frob{\big(\ideala \cdot \frob{\idealb}{q}\big)}{1/q}=\frob{\ideala}{1/q}\cdot \idealb$.
\end{lem}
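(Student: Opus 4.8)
The plan is to prove the two containments separately, using the minimality characterization of $[1/q]^{\text{th}}$ powers together with the flatness of Frobenius.

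For the containment $\frob{\big(\ideala \cdot \frob{\idealb}{q}\big)}{1/q} \supseteq \frob{\ideala}{1/q} \cdot \idealb$: by definition, $\frob{\ideala}{1/q}$ is the smallest ideal $\idealc$ with $\ideala \subseteq \frob{\idealc}{q}$, so in particular $\ideala \subseteq \frob{\big(\frob{\ideala}{1/q}\big)}{q}$. Multiplying by $\frob{\idealb}{q}$ and using \Cref{prop: basic properties or integer powers}\iref{item: integral power vs product}, we get
\[
\ideala \cdot \frob{\idealb}{q} \subseteq \frob{\big(\frob{\ideala}{1/q}\big)}{q} \cdot \frob{\idealb}{q} = \frob{\big(\frob{\ideala}{1/q} \cdot \idealb\big)}{q}.
\]
By the minimality defining $\frob{\big(\ideala \cdot \frob{\idealb}{q}\big)}{1/q}$, this forces $\frob{\big(\ideala \cdot \frob{\idealb}{q}\big)}{1/q} \subseteq \frob{\ideala}{1/q} \cdot \idealb$ --- wait, the minimality gives the containment in the direction $\frob{\big(\ideala\cdot\frob{\idealb}{q}\big)}{1/q}\subseteq\frob{\ideala}{1/q}\cdot\idealb$, which is in fact the \emph{other} containment. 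So this argument actually establishes ``$\subseteq$''. Let me restate: the minimality of $\frob{(-)}{1/q}$ says that whenever $\mathfrak{c}$ satisfies $\ideala\cdot\frob{\idealb}{q}\subseteq\frob{\mathfrak{c}}{q}$, we have $\frob{\big(\ideala\cdot\frob{\idealb}{q}\big)}{1/q}\subseteq\mathfrak{c}$; applying this with $\mathfrak{c}=\frob{\ideala}{1/q}\cdot\idealb$ gives the containment ``$\subseteq$'' in the lemma.

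For the reverse containment $\frob{\big(\ideala \cdot \frob{\idealb}{q}\big)}{1/q} \supseteq \frob{\ideala}{1/q} \cdot \idealb$: it suffices to show that $\frob{\ideala}{1/q}\cdot\idealb$ is contained in $\frob{\big(\ideala\cdot\frob{\idealb}{q}\big)}{1/q}$. Set $\idealc := \frob{\big(\ideala\cdot\frob{\idealb}{q}\big)}{1/q}$, so $\ideala\cdot\frob{\idealb}{q}\subseteq\frob{\idealc}{q}$ by definition. I want to deduce $\ideala \subseteq \big(\frob{\idealc}{q} : \frob{\idealb}{q}\big)$, and then use flatness of Frobenius over the regular ring $R$ to rewrite the colon ideal as $\big(\frob{\idealc}{q}:\frob{\idealb}{q}\big) = \frob{(\idealc : \idealb)}{q}$ --- this is exactly the flatness identity already invoked in the proof of \Cref{lem: basic properties of rational powers}. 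Then $\ideala\subseteq\frob{(\idealc:\idealb)}{q}$, so by minimality $\frob{\ideala}{1/q}\subseteq\idealc:\idealb$, i.e. $\frob{\ideala}{1/q}\cdot\idealb\subseteq\idealc$, as desired.

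The main obstacle --- really the only nontrivial point --- is the flatness identity $\big(\frob{\idealc}{q}:\frob{\idealb}{q}\big) = \frob{(\idealc:\idealb)}{q}$ for ideals of the $F$-finite regular domain $R$; this is standard (it follows from faithful flatness of the Frobenius $R \to R$ on a regular ring, applied to the exact sequence defining the colon ideal) and the paper has already used it without comment in the proof of \Cref{lem: basic properties of rational powers}, so I would cite it in the same way. Everything else is a formal manipulation with the minimality property of $[1/q]^{\text{th}}$ powers and \Cref{prop: basic properties or integer powers}\iref{item: integral power vs product}.
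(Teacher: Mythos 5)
Your proof is correct and takes essentially the same route as the paper's: both arguments rest on the minimality characterization of $[1/q]^{\text{th}}$ powers together with the flatness identity $\big(\frob{\idealc}{q}:\frob{\idealb}{q}\big)=\frob{(\idealc:\idealb)}{q}$. The paper merely packages your two containments into a single chain of equivalences, showing that for an arbitrary ideal $\idealc$ one has $\ideala\,\frob{\idealb}{q}\subseteq\frob{\idealc}{q}$ if and only if $\frob{\ideala}{1/q}\idealb\subseteq\idealc$, so that both sides of the lemma are characterized as the same smallest ideal.
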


\begin{proof}
It suffices to show that if $\idealc$ is an ideal of $R$, then $\ideala \frob{\idealb}{q} \subseteq \frob{\idealc}{q}$ if and only if $\frob{\ideala}{1/q} \cdot \idealb \subseteq \idealc$.  However, the containment $\ideala \frob{\idealb}{q} \subseteq \frob{\idealc}{q}$ holds if and only if 
\[ \ideala \subseteq ( \frob{\idealc}{q} : \frob{\idealb}{q} ) = \frob{( \idealc : \idealb )}{q}, \] 
which occurs if and only if $\frob{\ideala}{1/q} \subseteq (\idealc : \idealb)$, as desired.
\end{proof}

\subsection{Real powers}

We now extend the Frobenius powers to arbitrary nonnegative real exponents by using $p$-rational approximations from above.
Throughout, given a sequence $\seq{a_k}$ of real numbers, we use the notation  $a_k \searrow a$ to mean that the sequence 
	converges to the real number $a$ monotonically from above.
	
\begin{lem}\label{lem: independence of sequence}
	Let $\ideala$ be a nonzero ideal of $R$, $t\in \RRnn$, and suppose $\seq{t_k}$ and $\seq{s_k}$ 
		are sequences of $p$-rational numbers such that  $t_k\searrow t$ and $s_k\searrow t$.
	Monotonicity ensures that the Frobenius powers $\frob{\ideala}{t_k}$ and $\frob{\ideala}{s_k}$ form ascending chains of ideals, 
		which must eventually stabilize. 	
	Suppose $\frob{\ideala}{t_k}=\idealb$ and $\frob{\ideala}{s_k}=\idealc$, for all $k \ge k_0$.
	Then $\idealb=\idealc$.
\end{lem}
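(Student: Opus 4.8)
The plan is to show that each of $\idealb$ and $\idealc$ is sandwiched between Frobenius powers drawn from the other sequence, and then invoke monotonicity (\Cref{lem: basic properties of rational powers}\iref{item: monotonicity}) to force equality. The key point is that, since both sequences converge monotonically from above to the same $t$, the terms interleave: given any index $k$, because $s_j \searrow t \le t_k$ and the convergence is strict-from-above in the limit, I can find an index $j$ large enough that $s_j < t_k$ (if $s_j = t$ for all large $j$, handle this degenerate case separately — then $\idealc = \frob{\ideala}{t}$ literally, and a symmetric argument applies; more cleanly, one notes $s_j \le t_k$ suffices together with the stabilization). Conversely, for any index $j \ge k_0$ there is an index $k$ with $t_k \le s_j$. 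This is the elementary real-analysis input and is the only place the hypothesis $t_k \searrow t$, $s_k \searrow t$ is used.

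Carrying this out: fix $k \ge k_0$. Choose $j \ge k_0$ with $s_j \le t_k$; such a $j$ exists because $s_j \to t$ and $t_k > t$ (or $t_k = t$, in which case $t_k$ is itself a term bounding everything below and the argument only simplifies). By \Cref{lem: basic properties of rational powers}\iref{item: monotonicity}, $\frob{\ideala}{t_k} \subseteq \frob{\ideala}{s_j}$, i.e.\ $\idealb \subseteq \idealc$. Reversing the roles of the two sequences — fix $j \ge k_0$, pick $k \ge k_0$ with $t_k \le s_j$, and apply monotonicity again — gives $\idealc \subseteq \idealb$. Hence $\idealb = \idealc$.

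I expect no serious obstacle here; the statement is essentially the assertion that the ascending chain $\frob{\ideala}{c}$, indexed by $p$-rationals $c > t$ ordered by reverse inclusion, has a well-defined eventual value independent of the cofinal sequence chosen to approach $t$. The only mildly delicate point is the bookkeeping around whether a sequence is eventually equal to $t$ versus strictly greater than $t$; this is handled uniformly by using the non-strict inequality $s_j \le t_k$ in the interleaving step, which is all that monotonicity requires, rather than insisting on strict inequality. I would also remark in passing that this common stabilized value is precisely what will be taken as the definition of the real Frobenius power $\frob{\ideala}{t}$, which is presumably the purpose of the lemma.
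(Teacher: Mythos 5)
Your interleaving argument coincides with the paper's in the main case (both sequences strictly above $t$), but it has a genuine gap in precisely the case you try to dismiss parenthetically: the situation where $t$ is $p$-rational and one sequence, say $\seq{t_k}$, is eventually constant equal to $t$, while the other satisfies $s_j > t$ for every $j$. Then there is \emph{no} index $j$ with $s_j \le t_k = t$, so the interleaving step yields only the one-sided containment $\idealc = \frob{\ideala}{s_j} \subseteq \frob{\ideala}{t} = \idealb$; passing to the non-strict inequality does not rescue you, and the claim that in this case ``the argument only simplifies'' is false. Moreover, no argument using only monotonicity together with stabilization of the chains can close this gap: an assignment $c \mapsto I_c$ on $p$-rational numbers with $I_c = R$ for $c \le 1$ and $I_c = \idealm$ for $c > 1$ is monotone and all its ascending chains stabilize, yet the constant sequence $t_k = 1$ and the sequence $s_k = 1 + p^{-k}$ stabilize at different ideals. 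What is needed is exactly right constancy, \Cref{lem: basic properties of rational powers}\iref{item: right continuity} --- the nontrivial Krull-intersection fact that $\frob{\ideala}{c} = \frob{\ideala}{t}$ for all $p$-rational $c \ge t$ sufficiently close to $t$ --- which immediately gives $\idealb = \frob{\ideala}{t} = \frob{\ideala}{s_j} = \idealc$ for $j \gg 0$. This is precisely how the paper disposes of the eventually-constant case; since your proof never invokes right constancy, it is incomplete. (Your closing remark is correct: the common stabilized value is what the paper takes as the definition of $\frob{\ideala}{t}$.)
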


\begin{proof}
	If neither sequence is eventually constant, 
		choose $k_2,  k_1\ge k_0$ so that $t_{k_2}< s_{k_1}<t_{k_0}$; then   
		$\idealb=\frob{\ideala}{t_{k_0}}\subseteq \frob{\ideala}{s_{k_1}} =\idealc \subseteq \frob{\ideala}{t_{k_2}}=\idealb$,
		by \Cref{lem: basic properties of rational powers}\iref{item: monotonicity}.
	If $\seq{t_k}$ is eventually constant (so  $t$ is $p$-rational), by \Cref{lem: basic properties of rational powers}\iref{item: right continuity}
		we can choose $k\gg k_0$  sufficiently large so that $\frob{\ideala}{s_k}=\frob{\ideala}{t}$; then 
		$\idealc=\frob{\ideala}{s_k}=\frob{\ideala}{t}=\idealb$.
\end{proof}

\Cref{lem: independence of sequence} allows us to define 
	$\frob{\ideala}{t}$ for every $t\in \RRnn$ by taking $p$-rational approximations of $t$ from above. 

\begin{defn}[Real Frobenius powers]\label{defn: real frob power}
	Let $\ideala$ be a nonzero ideal of $R$.
	If $t\in \RRnn$ and $\seq{t_k}$ is a sequence of $p$-rational numbers such that $t_k\searrow t$, we define 
		\[\frob{\ideala}{t}\coloneqq \bigcup_{k\ge 0}\frob{\ideala}{t_k}.\]
	Thus, $\frob{\ideala}{t}=\frob{\ideala}{t_k}$,  for $k\gg 0$.
\end{defn}

Note that the definition of $\frob{\ideala}{t}$ does not depend on the choice of the sequence $\seq{t_k}$ by 
	\Cref{lem: independence of sequence}.	
For the purpose of computations of the ideals $\frob{\ideala}{t}$, we often use the sequence $\seq{\up{p^kt}/p^k}$.
If $t$ is $p$-rational, then by taking the constant sequence we see that this definition
	agrees with our earlier one.

\Cref{defn: real frob power} extends to the zero ideal, provided $t>0$.
Thus, $\frob{\ideal{0}}{t}=\ideal{0}$ for $t>0$, while $\frob{\ideal{0}}{0}=\ideal{0}^0=R$, as previously defined. 

If $\ideala$ is a principal ideal, then integral Frobenius powers of $\ideala$ are just regular powers; consequently, 
	the real Frobenius power $\frob{\ideala}{t}$ coincides with the test ideal $\tau(\ideala^t)$ of 
	\cite{blickle+mustata+smith.discr_rat_FPTs}, except in the case that $t=0$ and $\ideala$ is the zero ideal.

The following proposition generalizes \Cref{lem: basic properties of rational powers}, and will be used repeatedly throughout the paper 
	without further reference.
	
\begin{prop}
	Let $\ideala$ be a nonzero ideal of $R$, and $s,t\in \RRnn$.
	\begin{enumerate}[(1)]
		\item\label{item: monotonicity for real powers}	
			\textbf{\emph{(Monotonicity)}} If $s> t$, then $\frob{\ideala}{s}\subseteq \frob{\ideala}{t}$.
		\item\label{item: right constancy for real powers} 
			\textbf{\emph{(Right constancy)}} 
			$\frob{\ideala}{s}=\frob{\ideala}{t}$, for each $s>t$ sufficiently close to $t$. 
	\end{enumerate}
\end{prop}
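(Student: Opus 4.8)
The plan is to reduce both statements to the already-settled $p$-rational case, \Cref{lem: basic properties of rational powers}, by exploiting the defining feature of real Frobenius powers: for any $p$-rational sequence $t_k\searrow t$ one has $\frob{\ideala}{t}=\frob{\ideala}{t_k}$ for all $k\gg 0$ (\Cref{defn: real frob power}). No new idea is needed beyond this; the content is entirely in choosing approximating sequences and indices with care, mirroring the proof of \Cref{lem: basic properties of rational powers}.

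For monotonicity \iref{item: monotonicity for real powers}, assume $s>t$ and fix a $p$-rational number $c$ with $t<c<s$ (such $c$ exists since the $p$-rational numbers are dense in $\RR$). Pick $p$-rational sequences $s_k\searrow s$ and $t_j\searrow t$. Since $s_k\searrow s>c$, we have $s_k>c$ for $k\gg 0$; since $t_j\searrow t<c$, we have $t_j<c$ for $j\gg 0$. For such indices, \Cref{lem: basic properties of rational powers}\iref{item: monotonicity} gives $\frob{\ideala}{s_k}\subseteq\frob{\ideala}{c}\subseteq\frob{\ideala}{t_j}$. Enlarging $k$ and $j$ further so that in addition $\frob{\ideala}{s}=\frob{\ideala}{s_k}$ and $\frob{\ideala}{t}=\frob{\ideala}{t_j}$, we obtain $\frob{\ideala}{s}\subseteq\frob{\ideala}{c}\subseteq\frob{\ideala}{t}$, as desired.

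For right constancy \iref{item: right constancy for real powers}, I will first produce a single $p$-rational $c>t$ with $\frob{\ideala}{c}=\frob{\ideala}{t}$. If $t$ is $p$-rational, such a $c$ is provided by \Cref{lem: basic properties of rational powers}\iref{item: right continuity}. If $t$ is not $p$-rational, take any $p$-rational sequence $t_k\searrow t$ (for instance $t_k=\up{p^kt}/p^k$); then $t_k>t$ for every $k$ since the limit is irrational, and by \Cref{defn: real frob power} we may set $c=t_{k_0}$ for $k_0\gg 0$, so that $\frob{\ideala}{c}=\frob{\ideala}{t}$. In either case, for any real $s$ with $t<s\le c$, applying part \iref{item: monotonicity for real powers} twice yields $\frob{\ideala}{c}\subseteq\frob{\ideala}{s}\subseteq\frob{\ideala}{t}=\frob{\ideala}{c}$, hence equality throughout; this is precisely the claim that $\frob{\ideala}{s}=\frob{\ideala}{t}$ for every $s>t$ sufficiently close to $t$.

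The only subtlety — and the closest thing to an obstacle — is the case split in \iref{item: right constancy for real powers}: when $t$ is $p$-rational one must appeal to $p$-rational right constancy directly, while when $t$ is irrational one instead uses that a rational sequence cannot stabilize at an irrational limit, so the eventual value of the ascending chain $\bigl(\frob{\ideala}{t_k}\bigr)_k$ is already attained at some $t_{k_0}$ strictly exceeding $t$. All remaining steps are routine translations of the $p$-rational statements through the stabilization in \Cref{defn: real frob power}.
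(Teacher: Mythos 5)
Your proof is correct and follows essentially the same route as the paper: monotonicity is reduced to the $p$-rational case via approximating sequences, and right constancy follows from monotonicity together with the existence of a $p$-rational $c>t$ satisfying $\frob{\ideala}{c}=\frob{\ideala}{t}$ (the paper gets such a $c$ directly from the definition by using a strictly decreasing approximating sequence, so your case split is not needed but does no harm). One small wording slip: when $t$ is not $p$-rational you justify $t_k>t$ by saying ``the limit is irrational,'' but non-$p$-rational numbers need not be irrational; the correct reason is simply that each $t_k$ is $p$-rational while $t$ is not, so $t_k\ne t$.
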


\begin{proof}
	First, note that \iref{item: right constancy for real powers} follows from~\iref{item: monotonicity for real powers}:
		by definition, $\frob{\ideala}{t}=\frob{\ideala}{c}$ for some $p$-rational $c>t$, 
		and~\iref{item: monotonicity for real powers} implies that $\frob{\ideala}{t}=\frob{\ideala}{s}$ for each 
		$s$ between $t$ and $c$.
			
	Suppose $s>t$, and let $\seq{t_k}$ and $\seq{s_k}$ be sequences of $p$-rational numbers such that 
		$t_k\searrow t$ and $s_k\searrow s$.
	For $k\gg 0$ we have $s_k\ge s>t_k\ge t$, $\frob{\ideala}{s}=\frob{\ideala}{s_k}$, and 
		$\frob{\ideala}{t}=\frob{\ideala}{t_k}$, so that 
		\Cref{lem: basic properties of rational powers}\iref{item: monotonicity} yields~\iref{item: monotonicity for real powers}.
\end{proof}

\begin{rem}
	Right constancy fails at $t=0$ for the zero ideal, since $\frob{\ideal{0}}{0}=\ideal{0}^0=R$, while $\frob{\ideal{0}}{t}=\ideal{0}$ for each $t>0$.
\end{rem}

Additional properties of Frobenius powers are listed in the next proposition. 

\begin{prop}[Basic properties of real Frobenius powers]\label{prop: basic properties of real powers}
	Let $\ideala$ and $\idealb$ be ideals of $R$, and $t,s\in \RRnn$.
	Then the following properties hold.
	\begin{enumerate}[(1)]
		\item\label{item: real power vs test ideal} 
			If $t>0$ or $\ideala$ is nonzero, then $\frob{\ideala}{t}\subseteq \tau(\ideala^t)$, and equality holds if $\ideala$ is principal.
		\item 
			$\frob{(\ideala\idealb)}{t}\subseteq\frob{\ideala}{t}\frob{\idealb}{t}$.  		
		\item	
			If $\ideala\subseteq \idealb$, then $\frob{\ideala}{t}\subseteq\frob{\idealb}{t}$.
		\item\label{item: product of real powers}  
			$\frob{\ideala}{t+s}\subseteq \frob{\ideala}{t}\frob{\ideala}{s}$, and equality holds if $t<1$ and $s\in \NN$.
		\item\label{item: real power of real power}  
			$\frob{\ideala}{ts}\subseteq \frob{\big(\frob{\ideala}{t}\big)}{s}$, and equality holds if $t=p^e$ or $s=p^{-e}$, for some $e\in \NN$. 
	\end{enumerate}
\end{prop}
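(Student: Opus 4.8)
The overall plan is to prove each assertion first for $p$-rational exponents --- where it reduces to the integral case of \Cref{prop: basic properties or integer powers}, together with \Cref{lem: basic facts of roots} and \Cref{lem: qth roots vs. products} --- and then to pass to arbitrary real exponents by taking $p$-rational approximating sequences from above as in \Cref{defn: real frob power}, choosing the sequences on the two sides of each inclusion compatibly. The degenerate cases, in which some ideal is zero or an exponent is $0$, follow directly from $\frob{\idealc}{0}=R$ and $\frob{\ideal{0}}{u}=\ideal{0}$ for $u>0$ (the borderline $t=0$, $\ideala=\ideal{0}$ being exactly what is excluded in~\iref{item: real power vs test ideal}); outside of these I assume the ideals are nonzero and the exponents positive, so that every Frobenius power appearing below is nonzero. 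For~\iref{item: real power vs test ideal}, \Cref{prop: basic properties or integer powers}\iref{item: integral power vs regular} gives $\frob{\ideala}{k}\subseteq\ideala^k$ for all $k$, so using the sequence $\seq{\up{p^kt}/p^k}$ and $e\gg0$ one gets $\frob{\ideala}{t}=\frob{\big(\frob{\ideala}{\up{p^et}}\big)}{1/p^e}\subseteq\frob{\big(\ideala^{\up{p^et}}\big)}{1/p^e}\subseteq\tau(\ideala^t)$, the final containment being immediate from the characterization of test ideals in \cite{blickle+mustata+smith.discr_rat_FPTs}; when $\ideala$ is principal, $\frob{\ideala}{k}=\ideala^k$ for all $k$, so the ascending chains defining $\frob{\ideala}{t}$ and $\tau(\ideala^t)$ coincide (as already noted before the proposition). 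Parts~(2) and~(3) are immediate: for $p$-rational $c=k/q$, combine \Cref{prop: basic properties or integer powers}\iref{item: integral power vs product} with \Cref{lem: basic facts of roots}\iref{item: root vs product} to get $\frob{(\ideala\idealb)}{c}\subseteq\frob{\ideala}{c}\,\frob{\idealb}{c}$, and \Cref{prop: basic properties or integer powers}\iref{item: integral power vs containment} with \Cref{lem: basic facts of roots}\iref{item: root vs containment} to get $\frob{\ideala}{c}\subseteq\frob{\idealb}{c}$ when $\ideala\subseteq\idealb$; then approximate.

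For~\iref{item: product of real powers}, the containment holds for $p$-rational exponents by \Cref{prop: basic properties or integer powers}\iref{item: products of powers} followed by \Cref{lem: basic facts of roots}\iref{item: root vs product}, hence for real exponents after approximation. For the equality when $t<1$ and $s\in\NN$, I would fix a $p$-rational $c=k'/p^k$ with $t\le c<1$, so $k'<p^k$; then $k'$ and $sp^k$ add without carrying base $p$, so \Cref{prop: basic properties or integer powers}\iref{item: products of powers} together with~\iref{item: powers of powers} (applied to the $p$-power $p^k$) yields $\frob{\ideala}{k'+sp^k}=\frob{\ideala}{k'}\cdot\frob{\big(\frob{\ideala}{s}\big)}{p^k}$. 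Taking $\frob{(-)}{1/p^k}$ of both sides and invoking \Cref{lem: qth roots vs. products} turns this into $\frob{\ideala}{c+s}=\frob{\ideala}{c}\cdot\frob{\ideala}{s}$; since any $p$-rational sequence $c_j\searrow t$ is eventually $<1$, letting $j\to\infty$ gives the equality for real $t$.

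Part~\iref{item: real power of real power} carries the heaviest bookkeeping, and I expect its $p$-rational containment to be the main obstacle. For $p$-rational $c=k/q$ and $c'=k'/q'$, I would chain: $\frob{\ideala}{kk'}\subseteq\frob{\big(\frob{\ideala}{k}\big)}{k'}$ by \Cref{prop: basic properties or integer powers}\iref{item: powers of powers}; apply $\frob{(-)}{1/q}$, using \Cref{lem: basic facts of roots}\iref{item: root vs containment} and then~\iref{item: root vs frob power} and identifying $\frob{\big(\frob{\ideala}{k}\big)}{1/q}=\frob{\ideala}{c}$, to obtain $\frob{\big(\frob{\ideala}{kk'}\big)}{1/q}\subseteq\frob{\big(\frob{\ideala}{c}\big)}{k'}$; then apply $\frob{(-)}{1/q'}$, using \Cref{lem: basic facts of roots}\iref{item: root vs containment} again and~\iref{item: root of root} on the left, to reach $\frob{\ideala}{cc'}\subseteq\frob{\big(\frob{\ideala}{c}\big)}{c'}$. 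The only delicate point is unwinding the definition of $p$-rational powers correctly at each step. The real containment then follows from $p$-rational sequences $t_j\searrow t$, $s_j\searrow s$: one has $t_js_j\searrow ts$, and for $j\gg0$ both $\frob{\ideala}{t}=\frob{\ideala}{t_j}$ and $\frob{\big(\frob{\ideala}{t}\big)}{s}=\frob{\big(\frob{\ideala}{t_j}\big)}{s_j}$, so $\frob{\ideala}{ts}=\frob{\ideala}{t_js_j}\subseteq\frob{\big(\frob{\ideala}{t_j}\big)}{s_j}=\frob{\big(\frob{\ideala}{t}\big)}{s}$. For equality when $t=p^e$: \Cref{prop: basic properties or integer powers}\iref{item: powers of powers} gives $\frob{\ideala}{p^ek}=\frob{\big(\frob{\ideala}{p^e}\big)}{k}$ for every $k\in\NN$, hence $\frob{\ideala}{p^es}=\frob{\big(\frob{\ideala}{p^e}\big)}{s}$ for $p$-rational $s$ by the definition of $p$-rational powers, and approximation extends this to real $s$. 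For equality when $s=p^{-e}$: for $p$-rational $t=k/q$, \Cref{lem: basic facts of roots}\iref{item: root of root} together with the definition of $p$-rational powers gives $\frob{\ideala}{t/p^e}=\frob{\big(\frob{\ideala}{k}\big)}{1/(qp^e)}=\frob{\big(\frob{\ideala}{t}\big)}{1/p^e}$, and approximation again extends this to real $t$.
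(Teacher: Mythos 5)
Your proposal is correct and follows essentially the same route as the paper: reduce each statement to $p$-rational exponents via \Cref{prop: basic properties or integer powers}, \Cref{lem: basic facts of roots}, and \Cref{lem: qth roots vs. products}, then pass to real exponents by stabilizing $p$-rational approximations from above, with the ``no carrying'' observation plus \Cref{lem: qth roots vs. products} driving the equality in~\iref{item: product of real powers}. The only noteworthy variation is the $s=p^{-e}$ case of~\iref{item: real power of real power}, where you compute directly from \Cref{lem: basic facts of roots}\iref{item: root of root} and then approximate, whereas the paper deduces it from the already-proven containment with $t$ replaced by $t/p^e$ together with the minimality of Frobenius roots; both arguments are valid.
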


\begin{proof}
	These properties follow  from the properties of integral Frobenius powers  
		and of $[1/q]^\mathrm{th}$ powers (see  \Cref{prop: basic properties or integer powers} and 
		\Cref{lem: basic facts of roots}).
	To illustrate the methods, we verify \iref{item: product of real powers} and \iref{item: real power of real power}
		for a nonzero ideal $\ideala$.
	For each $k/q$ and $l/q$ in $(\QQnn)_{p^\infty}$, \Cref{prop: basic properties or integer powers}\iref{item: products of powers} and 
		 parts  \iref{item: root vs containment} and  \iref{item: root vs product} of
		\Cref{lem: basic facts of roots} give us 
		\begin{equation*}
			\frob{\ideala}{(k+l)/q}=
			\frob{\big(\frob{\ideala}{k+l}\big)}{1/q}\subseteq\frob{\big(\frob{\ideala}{k}\frob{\ideala}{l}\big)}{1/q}
				\subseteq\frob{\big(\frob{\ideala}{k}\big)}{1/q}\frob{\big(\frob{\ideala}{l}\big)}{1/q}
				=\frob{\ideala}{k/q}\frob{\ideala}{l/q}.
		\end{equation*}
	Letting $k/q\searrow t$ and $l/q\searrow s$ we obtain the  containment in~\iref{item: product of real powers}.
	If $s\in \NN$ and $t<1$, we may assume in this argument that $l=sq$ and $k/q<1$. 
	Since $k<q$, $k$ and $l=sq$ add without carrying (base $p$),
		and the first containment in the displayed equation becomes an equality, by 
		\Cref{prop: basic properties or integer powers}\iref{item: products of powers}.
	Furthermore, \Cref{lem: qth roots vs. products} shows that the second containment becomes an equality as well, since 
		$\frob{\ideala}{l}$ is a $[q]^\mathrm{th}$ power.
	So the additional assertion in~\iref{item: product of real powers} follows. 
	
	To verify \iref{item: real power of real power}, we again start with $k/q$ and $l/q$ in  $(\QQnn)_{p^\infty}$, and observe that 
		\[\frob{\ideala}{kl/q^2}=\frob{\big(\frob{\ideala}{kl}\big)}{1/q^2}\subseteq \frob{\Big(\frob{\big(\frob{\ideala}{k}\big)}{l}\Big)}{1/q^2}
		\subseteq \frob{\big(\frob{\ideala}{k/q}\big)}{l/q},\]
		where the first containment follows from \Cref{prop: basic properties or integer powers}\iref{item: powers of powers}
		and \Cref{lem: basic facts of roots}\iref{item: root vs containment}, and the second follows from 
		parts \iref{item: root vs containment}, \iref{item: root of root}, and \iref{item: root vs frob power} of \Cref{lem: basic facts of roots}.
	Letting $k/q\searrow t$ and $l/q\searrow s$ we obtain the  containment in~\iref{item: real power of real power}.

	If $t=p^e$, then for each $l/q$ we have 
		$\frob{\big(\frob{\ideala}{p^e}\big)}{l/q}=\frob{\big(\frob{\ideala}{p^el}\big)}{1/q}=\frob{\ideala}{p^el/q}$,
		by~\Cref{prop: basic properties or integer powers}\iref{item: powers of powers}.
	Letting $l/q\searrow s$ we see that $\frob{\big(\frob{\ideala}{t}\big)}{s}=\frob{\ideala}{ts}$.

	If $s=1/p^e$, then we have $\frob{\ideala}{t}\subseteq \frob{\big(\frob{\ideala}{t/p^e}\big)}{p^e}$, by the containment already proven.
	This is equivalent to $\frob{\big(\frob{\ideala}{t}\big)}{1/p^e}\subseteq\frob{\ideala}{t/p^e}$, or 
		$\frob{\big(\frob{\ideala}{t}\big)}{s}\subseteq \frob{\ideala}{ts}$.
\end{proof}

The following corollary is a rephrasing of the second assertion in 
	\Cref{prop: basic properties of real powers}\iref{item: product of real powers}---the 
	analog of Skoda's Theorem \cite[Proposition~2.25]{blickle+mustata+smith.discr_rat_FPTs} in our setting.
This result illustrates one way in which real Frobenius powers behave like test ideals of \emph{principal} ideals, 
	since Skoda requires that $\ideala$  
	can be generated by at most $m$ elements in order to conclude that $\tau(\ideala^{m+k}) = \ideala^{k+1} \tau(\ideala^{m-1})$ for $k \geq 0$.
	
\begin{cor}\label{cor: skoda}
	$\frob{\ideala}{t}=\frob{\ideala}{\down{t}}\cdot \frob{\ideala}{\fr{t}}$, for each $t\in \RRnn$, where $\fr{t}$ denotes the fractional
		part of $t$, \ie $\fr{t}=t-\down{t}$.
	\qed
\end{cor}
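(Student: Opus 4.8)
The plan is to read the identity off directly from the equality clause of \Cref{prop: basic properties of real powers}\iref{item: product of real powers}. Every $t \in \RRnn$ decomposes as $t = \fr{t} + \down{t}$, where $0 \le \fr{t} < 1$ and $\down{t} \in \NN$. Applying that clause with the roles of its two exponents played by $\fr{t}$ and $\down{t}$---so that its hypotheses ``first exponent $< 1$'' and ``second exponent in $\NN$'' are met---gives
\[
	\frob{\ideala}{t} = \frob{\ideala}{\fr{t} + \down{t}} = \frob{\ideala}{\fr{t}}\cdot \frob{\ideala}{\down{t}} = \frob{\ideala}{\down{t}}\cdot\frob{\ideala}{\fr{t}},
\]
the last rearrangement being commutativity of the ideal product. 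This settles the case of a nonzero ideal $\ideala$; the degenerate case $\ideala = \ideal{0}$ follows from \Cref{defn: real frob power} together with the conventions $\frob{\ideal{0}}{0} = R$ and $\frob{\ideal{0}}{s} = \ideal{0}$ for $s > 0$: if $\fr{t} > 0$ both sides vanish, if $\fr{t} = 0$ and $t > 0$ the right-hand side is $\frob{\ideal{0}}{t} \cdot R = \ideal{0}$, and if $t = 0$ both sides equal $R$.

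I anticipate no real obstacle: all of the content already lives in \Cref{prop: basic properties of real powers}\iref{item: product of real powers}, and the corollary is just its specialization to the exponents $\fr{t}$ and $\down{t}$. The only step requiring any care is checking that this pair of exponents satisfies the hypotheses of that part---that the fractional one is strictly below $1$ and the other is a nonnegative integer---which is immediate from the definitions of $\down{t}$ and $\fr{t}$ for any real $t \ge 0$.
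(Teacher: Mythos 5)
Your proof is correct and is exactly the paper's intended argument: the corollary is stated there as a direct rephrasing of the equality clause of \Cref{prop: basic properties of real powers}\iref{item: product of real powers} applied to the decomposition $t=\fr{t}+\down{t}$. Your separate check of the zero-ideal cases is a careful (and harmless) addition that the paper leaves implicit.
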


\begin{rem}
	We choose to work in an integral domain for simplicity, and because our main applications will be in the setting 
		of polynomial rings over $F$-finite fields.
	However, the notions and results introduced in this paper extend to arbitrary $F$-finite regular rings of positive characteristic, 
		provided one exerts care when dealing with $0^\textrm{th}$ powers. 
	For when $R$ is not a domain, right constancy at $t=0$ not only fails for the zero ideal, but also 
		for certain nonzero ideals.
	For instance, if $R=S\times S$, where $S$ is a regular $F$-finite domain of positive characteristic, 
		and $\ideala=\ideal{0}\times S$, 
		then $\frob{\ideala}{0}=\ideala^0=R$, while $\frob{\ideala}{s}=\ideala$, for each $s>0$.  
	The analogous issue is avoided in \cite{blickle+mustata+smith.discr_rat_FPTs} by virtue of their definition of 
		$\tau(\ideala^0)$; see \cite[Remark~2.10]{blickle+mustata+smith.discr_rat_FPTs}.
\end{rem}

\begin{rem}
	The real Frobenius powers commute with localization and completion; \ie the following hold
	for each $t\in \RRnn$:
	\begin{enumerate}[(1)]
		\item  If $S$ is a multiplicative system in $R$, then $\frob{(S^{-1}\ideala)}{t}  = S^{-1} (\frob{\ideala}{t})$.
		\item If $R$ is local and $\widehat{R}$ is its completion, then $\frob{(\ideala \widehat{R})}{t} = \frob{\ideala}{t} \widehat{R}$.
	\end{enumerate}
	These both follow from the fact that localization and completion commute with standard Frobenius and 
	$[1/q]^\textrm{th}$ powers of ideals (the last statement can be found in \cite[Lemma~2.7]{blickle+mustata+smith.discr_rat_FPTs}).
\end{rem}

We note that there exists an algorithm for computing $\frob{\ideala}{t}$ when $t$ is a nonnegative rational number and $\ideala$ is an ideal of a polynomial ring over a finite field.
See \Cref{s: algorithm} for details.

We conclude this section with a comparison of Frobenius powers and test ideals.  
We begin with \Cref{repeated_pigeonhole: L} below, which is well known to experts (see, \eg 		
	\cite[Lemma~3.2]{stefani+betancourt+perez.existence-of-FTs}).

\begin{lem}\label{repeated_pigeonhole: L}
	If $\ideala$ can be generated by $m$ elements, then 
		\[ \ideala^{(m + k)q} = \ideala^{(m-1)q}  \ideala^{(k+1)[q]}\]  
	for every integer $k \geq -1$.
\end{lem}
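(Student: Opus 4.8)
The plan is to argue on monomial generators and reduce everything to an arithmetic pigeonhole statement. Fix generators $\ideala=\ideal{f_1,\dots,f_m}$, and write $f^{\vv w}=f_1^{w_1}\cdots f_m^{w_m}$ for $\vv w\in\NN^m$. For any $N$ the ordinary power $\ideala^N$ is generated by the $f^{\vv w}$ with $\norm{\vv w}=N$; and, recalling that $\ideala^{(k+1)[q]}$ denotes $\bigl(\frob{\ideala}{q}\bigr)^{k+1}$ and that $\frob{\ideala}{q}=\ideal{f_1^q,\dots,f_m^q}$ in characteristic $p$, the ideal $\ideala^{(k+1)[q]}$ is generated by the products $(f^{\vv u})^q=f_1^{qu_1}\cdots f_m^{qu_m}$ with $\norm{\vv u}=k+1$. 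With these descriptions the inclusion $\supseteq$ is immediate: a generator $f^{\vv v}(f^{\vv u})^q$ of the right-hand side equals $f^{\vv v+q\vv u}$ with $\norm{\vv v+q\vv u}=(m-1)q+(k+1)q=(m+k)q$, hence lies in $\ideala^{(m+k)q}$.

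The content is the reverse inclusion, which I would prove by the pigeonhole argument the lemma's name suggests. Given $\vv w\in\NN^m$ with $\norm{\vv w}=(m+k)q$, it suffices to split $\vv w=q\vv u+\vv v$ with $\vv u,\vv v\in\NN^m$, $\norm{\vv u}=k+1$, and $\norm{\vv v}=(m-1)q$; for then $f^{\vv w}=(f^{\vv u})^q\,f^{\vv v}$ lies visibly in $\ideala^{(k+1)[q]}\,\ideala^{(m-1)q}$. To produce the splitting, write $w_i=q\down{w_i/q}+r_i$ with $0\le r_i\le q-1$; since $\sum_i r_i\le m(q-1)<mq$,
\[
	\sum_{i=1}^m\down{w_i/q}=(m+k)-\frac1q\sum_{i=1}^m r_i>(m+k)-m=k.
\]
As the left-hand side is an integer, $\sum_i\down{w_i/q}\ge k+1$, so one may greedily choose $\vv u\in\NN^m$ with $u_i\le\down{w_i/q}$ for all $i$ and $\norm{\vv u}=k+1$; then $\vv v\coloneqq\vv w-q\vv u$ lies in $\NN^m$ and satisfies $\norm{\vv v}=(m+k)q-(k+1)q=(m-1)q$, as required. (When $k=-1$ the asserted identity is the trivial $\ideala^{(m-1)q}=\ideala^{(m-1)q}$, realized by $\vv u=\vv 0$, $\vv v=\vv w$.)

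I expect the only real obstacle to be the pigeonhole estimate $\sum_i\down{w_i/q}\ge k+1$ — that is, checking that one can split off \emph{exactly} $k+1$ copies of $q$ from $\vv w$ so that the complementary exponent vector has norm precisely $(m-1)q$; once that is in hand, the rest is routine bookkeeping with monomial generators. An equivalent and perhaps more transparent route is to extract one factor $f_i^q$ at a time: whenever the current exponent vector has norm $\ge mq$, some coordinate is $\ge q$ by pigeonhole, so starting from norm $(m+k)q$ one performs $k+1$ such extractions and lands at norm $(m-1)q$.
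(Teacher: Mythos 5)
Your proof is correct, and it is essentially the paper's argument: the paper proves the lemma by induction on $k$, reducing to the single pigeonhole identity $\ideala^{mq}=\ideala^{(m-1)q}\frob{\ideala}{q}$, which is exactly the ``extract one factor $f_i^q$ at a time'' route you describe in your closing paragraph. Your main text merely unrolls that induction into a one-shot splitting $\vv w=q\vv u+\vv v$ via the floor-function estimate, which is fine.
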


\begin{proof}  
	We induce on $k$, with the base case $k=-1$ being obvious, and the induction step following from the identity 
 		\[\ideala^{m q} = \ideala^{(m-1)q} \frob{\ideala}{q},\] 
 	which itself is a direct consequence of the pigeonhole principle. 
\end{proof}

\begin{lem}\label{uniform comparison: L}
	If $\ideala$ can be generated by $m$ elements, then 
		\[ \ideala^{ \frac{(m-1)(q-1)}{p-1} }  \ideala^k = \ideala^{ \frac{(m-1)(q-1)}{p-1} }   \frob{\ideala}{k} \] 
	for every $q$ a power of $p$, and every integer $k$ with $0 \leq k < q$. 
\end{lem}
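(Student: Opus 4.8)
The plan is to deduce the identity one base-$p$ digit of $k$ at a time from the repeated pigeonhole principle of \Cref{repeated_pigeonhole: L}. The containment ``$\supseteq$'' is immediate from $\frob{\ideala}{k}\subseteq\ideala^k$ (\Cref{prop: basic properties or integer powers}\iref{item: integral power vs regular}), but the argument below in fact yields the equality directly, so I would not treat the two containments separately.

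Write $q=p^e$ and expand $k$ in base $p$ as $k=k_0+k_1p+\dots+k_{e-1}p^{e-1}$ with $0\le k_i<p$, which is possible since $0\le k<p^e$. (If $q=1$, then $k=0$ and both sides equal $R$, so I may assume $e\ge 1$.) For each $i\in\{0,\dots,e-1\}$ I would apply \Cref{repeated_pigeonhole: L} to $\ideala$, with the power of $p$ in that lemma taken to be $p^i$ and its integer parameter taken to be $k_i-1\ge -1$; since $(m+k_i-1)p^i=(m-1)p^i+k_ip^i$, this gives
\[
	\ideala^{(m-1)p^i}\,\ideala^{k_ip^i}=\ideala^{(m-1)p^i}\,\ideala^{k_i[p^i]}.
\]
Multiplying these $e$ identities together (ideal multiplication being commutative), the left-hand side becomes $\ideala^{(m-1)\sum_i p^i}\,\ideala^{\sum_i k_ip^i}$, while on the right-hand side the factors $\ideala^{k_0}\ideala^{k_1[p]}\cdots\ideala^{k_{e-1}[p^{e-1}]}$ assemble, by the very definition of the integral Frobenius power, into $\frob{\ideala}{k}$, and the remaining factors contribute $\ideala^{(m-1)\sum_i p^i}$. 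Since $\sum_{i=0}^{e-1}p^i=(q-1)/(p-1)$ and $\sum_{i=0}^{e-1}k_ip^i=k$, this is exactly the asserted equality.

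There is no genuine obstacle here: the entire content is already in \Cref{repeated_pigeonhole: L}, and the rest is bookkeeping with base-$p$ digits. The only points calling for a little care are checking that $k_i-1\ge -1$ so that \Cref{repeated_pigeonhole: L} applies; matching the total exponent of $\ideala$ using $(q-1)/(p-1)=1+p+\dots+p^{e-1}$ (which incidentally shows this exponent is an integer); and verifying that the $[p^i]$-Frobenius-power factors recombine, in the correct order, into $\frob{\ideala}{k}$, which is precisely where the definition of the integral Frobenius power enters.
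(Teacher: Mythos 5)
Your proof is correct and follows essentially the same route as the paper's: apply \Cref{repeated_pigeonhole: L} digit by digit with $q=p^i$ and parameter $k_i-1$, multiply the resulting identities, and use $\sum_{i<e}p^i=\frac{q-1}{p-1}$ together with the definition of the integral Frobenius power. Your version just spells out the bookkeeping (the case $q=1$, the check $k_i-1\ge -1$, and the reassembly into $\frob{\ideala}{k}$) that the paper leaves implicit.
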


\begin{proof} 
	\Cref{repeated_pigeonhole: L} implies that
		\[ \ideala^{(m-1)p^e} \ideala^{k_e p^e} =  \ideala^{(m-1)p^e} \ideala^{k_e [p^e]}, \]
		where $k_{e}$ is the coefficient of $p^e$ in the base $p$ expansion of $k$. 
	The bound $0 \leq k < q$ implies that $k_e = 0$ whenever $p^e$ is at least $q$, and so our claim follows after recalling that the 
		sum of all $p^e$ less than $q$ is $\frac{q-1}{p-1}$.  
\end{proof}

\begin{prop}\label{uniform relation between ideals: P}
 	If $\ideala$ can be generated by $m$ elements and $0 < t < 1$, then 
		\[ \tau(\ideala^{t + \frac{m-1}{p-1}}) \subseteq \frob{\ideala}{t} \subseteq \tau(\ideala^{t}).\]
\end{prop}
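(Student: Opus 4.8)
The right-hand containment $\frob{\ideala}{t}\subseteq\tau(\ideala^{t})$ is already recorded in \Cref{prop: basic properties of real powers}\iref{item: real power vs test ideal} (which applies since $t>0$), so the plan is to concentrate on the left-hand containment $\tau(\ideala^{s})\subseteq\frob{\ideala}{t}$, where I write $s=t+\frac{m-1}{p-1}$. The engine of the proof will be \Cref{uniform comparison: L}, whose purpose is precisely to trade an ordinary power $\ideala^{k}$ (with $k<q$) for the Frobenius power $\frob{\ideala}{k}$ once the ``buffer'' factor $\ideala^{(m-1)(q-1)/(p-1)}$ is present.

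First I would record the arithmetic. For $q$ a power of $p$ set $N_q\coloneqq\frac{(m-1)(q-1)}{p-1}$; this is a nonnegative integer because $p-1$ divides $q-1$, and moreover $\frac{(m-1)q}{p-1}=N_q+\frac{m-1}{p-1}$, so
\[ \up{sq}=\up{tq+\tfrac{(m-1)q}{p-1}}=N_q+\up{tq+\tfrac{m-1}{p-1}}.\]
Write $r=r_q\coloneqq\up{tq+\frac{m-1}{p-1}}$. The hypothesis $t<1$ is exactly what makes $tq+\frac{m-1}{p-1}\le q-1$, hence $r<q$, for all $q\gg0$; and obviously $r\ge tq$, i.e.\ $r/q\ge t$.

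Next I would fix $q=p^{e}$ with $e$ large enough that both $r<q$ and $\tau(\ideala^{s})=\frob{(\ideala^{\up{sq}})}{1/q}$, the latter being the standard description of test ideals from \cite{blickle+mustata+smith.discr_rat_FPTs}. Applying \Cref{uniform comparison: L} with $k=r$ gives $\ideala^{N_q}\ideala^{r}=\ideala^{N_q}\frob{\ideala}{r}$, so
\[ \ideala^{\up{sq}}=\ideala^{N_q+r}=\ideala^{N_q}\ideala^{r}=\ideala^{N_q}\frob{\ideala}{r}\subseteq\frob{\ideala}{r}.\]
Applying $[1/q]^{\mathrm{th}}$ powers---which preserve containments by \Cref{lem: basic facts of roots}\iref{item: root vs containment}---and using $\frob{(\frob{\ideala}{r})}{1/q}=\frob{\ideala}{r/q}$ (the definition of $p$-rational Frobenius powers), I get $\tau(\ideala^{s})\subseteq\frob{\ideala}{r/q}$. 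Since $r/q\ge t$, monotonicity of real Frobenius powers yields $\frob{\ideala}{r/q}\subseteq\frob{\ideala}{t}$, and the proof is complete.

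I expect the only point requiring real care to be the choice of the exponent $r$ and the verification that $r<q$ for $q\gg0$: this is where $t<1$ enters essentially, and it is also why the buffer exponent must be taken to be exactly $\frac{m-1}{p-1}$. One should also check that the finitely many ``$q\gg0$'' conditions (the stabilization of $\tau(\ideala^{s})$ as a single $[1/q]^{\mathrm{th}}$ power, and $r<q$) can be satisfied by a common $q$, which is automatic since each holds for all sufficiently large $q=p^{e}$. Beyond this bookkeeping I anticipate no obstacle.
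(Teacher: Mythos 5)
Your proposal is correct and follows essentially the same route as the paper: both reduce to the identity of \Cref{uniform comparison: L}, note that $t<1$ forces the relevant exponent to be less than $q$ for $q\gg0$, and then take $[1/q]^{\mathrm{th}}$ powers and pass to the limit. The only (immaterial) difference is that you apply the lemma with $k=\up{tq+\frac{m-1}{p-1}}$, whereas the paper first shrinks $\ideala^{\up{sq}}$ into $\ideala^{N_q+\up{tq}}$ and applies it with $k=\up{tq}$.
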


\begin{proof}
	Since $t < 1$, $\lceil t q \rceil < q$ for all $q \gg 0$, and for such $q$, 
		\[ \ideala^{ \left \lceil \left (t+ \frac{ m-1}{p-1}\right )q \right \rceil} \subseteq \ideala^{ \frac{(m-1)(q-1)}{p-1} + \lceil t q \rceil }  		\subseteq \ideala^{ \left [ \lceil t q \rceil \right ]}, \]
		where the last containment follows from \Cref{uniform comparison: L}. 
	Taking $[1/q]^\mathrm{th}$ powers of this and letting $q \to \infty$ then 
		shows that $\tau(\ideala^{t + \frac{m-1}{p-1}}) \subseteq \frob{\ideala}{t}$. 
\end{proof}

\subsection{Examples}
\label{subs:  Examples}

As noted in \Cref{prop: basic properties of real powers}\iref{item: real power vs test ideal}, the Frobenius power 
	$\frob{\ideal{f}}{t}$ agrees with the test ideal $\tau(f^{t})$ whenever $t > 0$.  
In this subsection, we compare Frobenius powers and test ideals of certain non-principal monomial ideals.  
In contrast with the principal setting, we will see that, even in this simple case, Frobenius powers and test ideals can be quite different.

We begin by recalling the situation for test ideals of monomial ideals:  
	If $\ideala$ is a monomial ideal in $\kk[x_1, \ldots, x_n]$, then $\tau(\ideala^{t})$ is generated by all 
	monomials $x^{\vv{u}}$ such that $\vv{u} + \vv{1}$ is contained in the interior of $t N$, where 
	$\vv{1} = (1, \ldots, 1) \in \ZZ^n$ and $N$ is the Newton polyhedron of $\ideala$ in $\RR^n$ 
	(see \cite[Theorem~6.10]{hara+yoshida.generalization_TC_multiplier_ideals}, and 
	\cite{howald.multiplier_ideals_of_monomial_ideals} for the analogous result for multiplier ideals).  
In particular, $\tau(\ideala^{t})$ depends on $N$ and $t$, but not on the ideal $\ideala$, nor on the characteristic.  
As we see below, this concrete description allows one to explicitly compute the test ideals of a monomial ideal without too much effort.

\begin{exmp}\label{exmp:  test ideals of m^7}
		Set $\idealm= \ideal{ x, y } \subseteq \kk[x,y]$.  If $\ideala = \idealm^7$  and $t \in [0,1)$, then
		\[\tau(\ideala^{t})=
		\begin{cases} 
			\kk[x,y] &\text{if } t \in \left[0, \frac{2}{7} \right) \\ 		
			\idealm &\text{if } t \in \left[\frac{2}{7}, \frac{3}{7}  \right) \\ 	
			\idealm^2 &\text{if } t \in \left[ \frac{3}{7} , \frac{4}{7} \right) \\ 		
			\idealm^3 &\text{if } t \in \left[ \frac{4}{7} , \frac{5}{7} \right) \\ 
			\idealm^4 &\text{if } t \in \left[ \frac{5}{7} , \frac{6}{7} \right) \\ 
			\idealm^5 &\text{if } t \in \left[ \frac{6}{7} , 1 \right)
		\end{cases}\]
We stress that these formulas are valid in all characteristics.
\end{exmp}

In the article \cite{hernandez+etal.frobenius_examples}, the authors compute the critical exponents and Frobenius powers of certain $\idealm$-primary monomial ideals in a polynomial ring.  
We summarize our computations in the context of \Cref{exmp:  test ideals of m^7} below. 

\begin{exmp}
	Adopt the notation of \Cref{exmp:  test ideals of m^7}.  As in that example, 
		\[ \{ \frob{\ideala}{t} : 0 \leq t < 1  \} = \{ \kk[x,y],  \idealm, \idealm^2, \idealm^3, \idealm^4, \idealm^5 \}. \] 
	However, the values of $t$ that correspond to each of these possibilities may depend on the characteristic.  
	For example, if $p \equiv 4 \bmod 7$, then
		\[\frob{\ideala}{t}=
		\begin{cases} 
			\kk[x,y] &\text{if } t \in \big[0, \frac{2}{7} - \frac{1}{7p} \big) \\ 		
			\idealm &\text{if } t \in \big[\frac{2}{7} - \frac{1}{7p}, \frac{3}{7}  \big) \\ 	
			\idealm^2 &\text{if } t \in \big[ \frac{3}{7} , \frac{4}{7} - \frac{1}{7p^2} \big) \\ 		
			\idealm^3 &\text{if } t \in \big[ \frac{4}{7} - \frac{1}{7p^2} , \frac{5}{7} \big) \\ 
			\idealm^4 &\text{if } t \in \big[ \frac{5}{7} , \frac{6}{7} \big) \\ 
			\idealm^5 &\text{if } t \in \big[ \frac{6}{7} , 1 \big)
		\end{cases}\]
	We note that there are similar formulas for every possible congruence class of $p$ modulo $7$.  
	For instance, if $p \equiv 1 \bmod 7$, then $\frob{\ideala}{t} = \tau(\ideala^{t})$ for every $t \in [0,1)$.  
	On the opposite extreme, if $p \equiv 3 \bmod 7$, then each of the intervals in $[0,1)$ that correspond to some 
		fixed value of $\frob{\ideala}{t}$ depends on the characteristic.
\end{exmp}

As mentioned above, the formulas for the test ideals of a monomial ideal depend only on the parameter $t$ and the Newton polyhedron of the ideal.    As it turns out, this is a particular instance of a more general fact:  the test ideals of an ideal are the same as those of its integral closure \cite[Lemma~2.27]{blickle+mustata+smith.discr_rat_FPTs}.  
Given this, it is natural to ask whether the analogous property also holds for Frobenius powers of ideals.  However, as we see below, the Frobenius powers of an ideal and its integral closure can differ, even in the monomial case.

\begin{exmp}\label{exmp: Comparison with integral closure} 
		Set $\idealm= \ideal{ x, y } \subseteq \kk[x,y]$.  If $\idealb = \idealm^5$  and $\idealc = \ideal{ x^5, y^5 }$, then it is clear that both these ideals determine the same Newton polyhedron in $\RR^2$, and so their test ideals must agree at all parameters.  In fact, 
				\[\tau(\idealb^{t})= \tau(\idealc^{t}) = 
		\begin{cases} 
			\kk[x,y] &\text{if } t \in \left[0, \frac{2}{5} \right) \\ 		
			\idealm &\text{if } t \in \left[\frac{2}{5}, \frac{3}{5}  \right) \\ 	
			\idealm^2 &\text{if } t \in \left[ \frac{3}{5} , \frac{4}{5} \right) \\ 		
			\idealm^3 &\text{if } t \in \left[ \frac{4}{5} , 1 \right)
					\end{cases}\]

On the other hand, when $p \equiv 3 \bmod 5$, we have computed that
	\[\frob{\idealb}{t}=
		\begin{cases} 
			\kk[x,y] &\text{if } t \in \big[0, \frac{2}{5} - \frac{1}{5p} \big) \\ 		
			\idealm &\text{if } t \in \big[\frac{2}{5}  - \frac{1}{5p} , \frac{3}{5}  - \frac{1}{5p^3}  \big) \\ 	
			\idealm^2 &\text{if } t \in \big[ \frac{3}{5}  - \frac{1}{5p^3} , \frac{4}{5}  - \frac{1}{5p^2} \big) \\ 		
			\idealm^3 &\text{if } t \in \big[ \frac{4}{5}  - \frac{1}{5p^2} , 1 \big)
			\end{cases}\]
while
	\[\frob{\idealc}{t}=
		\begin{cases} 
			\kk[x,y] &\text{if } t \in \big[0, \frac{2}{5} - \frac{1}{5p}  \big) \\ 		
			\idealm &\text{if } t \in \big[\frac{2}{5} - \frac{1}{5p}, \frac{3}{5} - \frac{2}{5p^2} \big) \\ 	
			\idealm^2 &\text{if } t \in \big[ \frac{3}{5} - \frac{2}{5p^2} , \frac{4}{5} - \frac{2}{5p} \big) \\ 		
			\ideal{ x^3, xy, y^3 } &\text{if } t \in \big[  \frac{4}{5} - \frac{2}{5p}  , \frac{4}{5} - \frac{1}{5p^2} \big) \\ 		
			\idealm^3 &\text{if } t \in \big[\frac{4}{5} - \frac{1}{5p^2} , 1 \big)
			\end{cases}\]
\end{exmp}

\begin{rem}  
	The somewhat unexpected monomial ideal $\ideal{x^3, xy, y^3}$ appearing in the formula for $\frob{\idealc}{t}$ above 
		is perhaps best realized as 
		\[ \ideal{ x^3, xy, y^3 } = \ideal{ x, y^3 } \cap \ideal{ x^3, y}. \] 
	Comparing this with the primary decomposition 
		\[ \idealm^3 = \ideal{ x, y^3 } \cap \ideal{ x^3, y } \cap \ideal{ x^2, y^2 }, \]
		the computation in \Cref{exmp: Comparison with integral closure} seems to suggest that the Frobenius powers of 
		$\idealc$ distinguish between certain irreducible components of the Frobenius powers of its integral closure $\idealb$.  
	We stress that, though this phenomenon is often reflected in our computations, the exact way in which the Frobenius 
		powers of an ideal and of its integral closure may differ remains quite mysterious.
\end{rem}

\section{Critical Frobenius exponents} \label{s: crits}

In this section, we define and investigate an analog of $F$-thresholds called \emph{critical Frobenius exponents}.   
Throughout this section, $\ideala$ and $\idealb$ are nonzero proper ideals of an $F$-finite regular domain 
	$R$ of characteristic $p>0$, with $\ideala\subseteq \sqrt\idealb$.

\subsection{Definition and basic properties}

For the reader's convenience, we recall here the definition and a few basic properties of $F$-thresholds.
We refer the reader to \cite{mustata+takagi+watanabe.F-thresholds} 
	and \cite{blickle+mustata+smith.discr_rat_FPTs} for a more detailed discussion.
For each~$q$, we define  $\nuab(q)=\max\{k\in \NN: \ideala^{k}\not\subseteq\frob{\idealb}{q}\}$.
As  $q$ varies over all powers of $p$, $\seq{\nuab(q)/q}$ forms a non-decreasing bounded sequence.
The \emph{$F$-threshold of $\ideala$ with respect to $\idealb$} is defined as
	\begin{equation*}
		\ftb(\ideala)=\lim_{q\to\infty}\frac{\nuab(q)}{q}=\sup_q \frac{\nuab(q)}{q}.
	\end{equation*}
The  $F$-threshold $\ftb(\ideala)$ is always a positive real number, which can be alternatively characterized 
	as follows:
	\begin{equation*}
		\ftb(\ideala)=\sup\big\{t\in \RRnn:\tau(\ideala^t)\not\subseteq \idealb\big\}
			=\min\big\{t\in \RRnn: \tau(\ideala^t)\subseteq \idealb\big\}.
	\end{equation*}

Critical Frobenius exponents are defined similarly, but with regular powers replaced with Frobenius powers.

\begin{defn}\label{defn: mu}
	For each $q$, we define  $\muab(q)=\max\{k\in \NN: \frob{\ideala}{k}\not\subseteq\frob{\idealb}{q}\}$.
\end{defn}

Because $\ideala\subseteq \sqrt\idealb$, we know that $\ideala^m\subseteq \idealb$ for some $m$.
If $\ideala$ is generated by $n$ elements, then 
	$\frob{\ideala}{mnq}\subseteq\ideala^{mnq}\subseteq \frob{\big(\ideala^{m}\big)}{q}\subseteq \frob{\idealb}{q}$,
	while $\frob{\ideala}{0}\not\subseteq \frob{\idealb}{q}$,
	so the maximum $k$  in the above definition always exists.	
Because $\frob{\ideala}{k}\not\subseteq\frob{\idealb}{q}$ if and only if $\frob{\ideala}{pk}\not\subseteq\frob{\idealb}{pq}$, 
	we have $\muab(pq)\ge p\cdot \muab(q)$, and thus,  
	as $q$ varies, $\seq{\muab(q)/q}$ forms a non-decreasing sequence, bounded above by $mn$.

\begin{defn}
	The \emph{critical Frobenius exponent of $\ideala$ with respect to $\idealb$}  is  
		\begin{equation*}
			\critb(\ideala)=\lim_{q\to\infty}\frac{\muab(q)}{q}=\sup_q \frac{\muab(q)}{q}.
		\end{equation*}
	We shall often omit the word ``Frobenius,'' and refer to $\critb(\ideala)$ simply as the 
		\emph{critical exponent of $\ideala$ with respect to $\idealb$}.
	We adopt the convention that $\crit^R(\ideala)$ is zero.
\end{defn}

\begin{notation}
	If $\ideala=\ideal{f}$, we denote $\ftb(\ideala)$ and $\critb(\ideala)$ by  $\ftb(f)$ and $\critb(f)$.
\end{notation}
	
We gather in the next proposition some properties of critical Frobenius exponents, analogous to those of $F$-thresholds.

\begin{prop}[Basic properties of critical Frobenius exponents]\label{prop: basic properties of critical exponents}\ 
	\begin{enumerate}[(1)]
		\item \label{item: basic inequalities for crit exps}
			If $f$ is a nonzero element of $\ideala$, then \[0<\ftb(f)=\critb(f)\le \critb(\ideala)\le \ftb(\ideala) < \infty.\]
		\item\label{item: crit exp in terms of integral powers}
		$\critb(\ideala)=\sup\big\{\tfrac{i}{q}\in \infint:\frob{\ideala}{i}\not\subseteq \frob{\idealb}{q}\big\}
		=\inf\big\{\tfrac{i}{q}\in \infint:\frob{\ideala}{i}\subseteq \frob{\idealb}{q}\big\}.$
		\item\label{item: crit exp in terms of real powers}
			$\begin{aligned}[t]
				\critb(\ideala)&=\sup\big\{t\in \RRnn: \frob{\ideala}{t}\not\subseteq \idealb\big\}
				=\min\big\{t\in \RRnn: \frob{\ideala}{t}\subseteq \idealb\big\}.
			\end{aligned}$
		\item\label{item: mu related to crit exp} 
			$\muab(q)<q \critb(\ideala)\le \muab(q)+1$, so that
				$\muab(q)=\up{q\critb(\ideala)}-1$.				
	\end{enumerate}
\end{prop}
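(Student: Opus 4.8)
The plan is to prove the four assertions in order, deriving part~(3) from part~(2) and part~(4) from part~(3); part~(1) is essentially independent. For part~(1): since $\ideal{f}$ is principal its integral Frobenius powers are ordinary powers, so $\frob{\ideal{f}}{k}=\ideal{f}^{k}$ for every $k$, whence $\mu_{\ideal{f}}^{\idealb}(q)=\nu_{\ideal{f}}^{\idealb}(q)$ for all $q$ and therefore $\ftb(f)=\critb(f)$. The chain $\frob{\ideal{f}}{k}\subseteq\frob{\ideala}{k}\subseteq\ideala^{k}$ (see \Cref{prop: basic properties or integer powers}) gives $\mu_{\ideal{f}}^{\idealb}(q)\le\muab(q)\le\nuab(q)$ and hence $\critb(f)\le\critb(\ideala)\le\ftb(\ideala)$. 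Finiteness of $\ftb(\ideala)$ is the boundedness of $\nuab(q)/q$, which follows from $\ideala\subseteq\sqrt{\idealb}$ exactly as in the discussion following \Cref{defn: mu}; positivity of $\ftb(f)$ holds because $f\ne 0$ together with $\bigcap_{q}\frob{\idealb}{q}\subseteq\bigcap_{q}\idealb^{q}=0$ (Krull intersection, $\idealb$ proper and $R$ a Noetherian domain) forces $f\notin\frob{\idealb}{q}$ for some $q$, so $\nu_{\ideal{f}}^{\idealb}(q)\ge 1$.

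For part~(2), the first point is that the truth of ``$\frob{\ideala}{i}\subseteq\frob{\idealb}{q}$'' depends only on the ratio $i/q$: replacing $(i,q)$ by $(pi,pq)$ applies the Frobenius functor to both ideals, and flatness of Frobenius over $R$ preserves (non\nobreakdash-)containment. Hence $S\coloneqq\{\,i/q\in\infint:\frob{\ideala}{i}\subseteq\frob{\idealb}{q}\,\}$ and $S'\coloneqq\{\,i/q\in\infint:\frob{\ideala}{i}\not\subseteq\frob{\idealb}{q}\,\}$ partition $\infint$, and both are nonempty ($0\in S'$ since $\frob{\ideala}{0}=R\not\subseteq\frob{\idealb}{q}$ as $\frob{\idealb}{q}\subseteq\idealb$ is proper, and $S\ne\emptyset$ by the boundedness cited above). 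By monotonicity of integral Frobenius powers (\Cref{prop: basic properties or integer powers}\iref{item: monotonicity of integral powers}), after clearing denominators one sees every element of $S'$ is strictly below every element of $S$, so $\sup S'\le\inf S$; since the $p$-rationals are dense in $\RRnn$ and $S\cup S'=\infint$, in fact $\sup S'=\inf S$. Finally $\muab(q)/q\in S'$ for every $q$ while every $i/q\in S'$ has $i\le\muab(q)$ by the definition of $\muab$, so $\sup S'=\sup_{q}\muab(q)/q=\critb(\ideala)$, which proves part~(2).

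For part~(3), the bridge to part~(2) is the equivalence $\frob{\ideala}{i/q}=\frob{\big(\frob{\ideala}{i}\big)}{1/q}\subseteq\idealb$ if and only if $\frob{\ideala}{i}\subseteq\frob{\idealb}{q}$, which is immediate from the defining minimality of $[1/q]^{\mathrm{th}}$ powers. Thus, writing $T\coloneqq\{\,t\in\RRnn:\frob{\ideala}{t}\subseteq\idealb\,\}$ and $T'\coloneqq\{\,t:\frob{\ideala}{t}\not\subseteq\idealb\,\}$, we get $T\cap\infint=S$ and $T'\cap\infint=S'$. Monotonicity of real Frobenius powers forces every element of $T'$ to lie below every element of $T$, so $\critb(\ideala)=\sup S'\le\sup T'\le\inf T\le\inf S=\critb(\ideala)$ and all four quantities coincide. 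To upgrade $\inf T$ to $\min T$: by the definition of real Frobenius powers $\frob{\ideala}{\critb(\ideala)}=\frob{\ideala}{c}$ for some $p$-rational $c$ slightly larger than $\critb(\ideala)=\inf S$; choosing $s\in S$ with $s<c$ and using monotonicity, $\frob{\ideala}{c}\subseteq\frob{\ideala}{s}\subseteq\idealb$, so $\critb(\ideala)\in T$.

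Finally, part~(4) follows from part~(3). From $\frob{\ideala}{\muab(q)+1}\subseteq\frob{\idealb}{q}$ we obtain $\frob{\ideala}{(\muab(q)+1)/q}\subseteq\idealb$, so $\critb(\ideala)\le(\muab(q)+1)/q$ by the $\min$ in part~(3). Conversely $\muab(q)/q\le\critb(\ideala)$ is immediate, and equality is impossible: it would make $\critb(\ideala)$ $p$-rational with $\frob{\ideala}{\critb(\ideala)}=\frob{\big(\frob{\ideala}{\muab(q)}\big)}{1/q}\not\subseteq\idealb$ (again by minimality of $[1/q]^{\mathrm{th}}$ powers, since $\frob{\ideala}{\muab(q)}\not\subseteq\frob{\idealb}{q}$), contradicting part~(3). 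Hence $\muab(q)<q\critb(\ideala)\le\muab(q)+1$, and taking ceilings gives $\muab(q)=\up{q\critb(\ideala)}-1$. I expect the steps requiring the most care to be the representation-independence and density/partition arguments underlying parts~(2)--(3), together with the two boundary refinements---the $\inf$-to-$\min$ upgrade in part~(3) and the strict inequality in part~(4)---each of which rests on correctly invoking the minimality characterization of $[1/q]^{\mathrm{th}}$ powers and the flatness of Frobenius; none of this is deep, but the bookkeeping must be kept straight.
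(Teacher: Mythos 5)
Your proposal is correct and follows essentially the same route as the paper: part (1) via the chain $\ideal{f}^i=\frob{\ideal{f}}{i}\subseteq\frob{\ideala}{i}\subseteq\ideala^i$ together with the Krull intersection theorem, and parts (2)--(4) via the four sets of parameters (your $S,S',T,T'$ are the paper's $A,B,\bar A,\bar B$), the equivalence $\frob{\ideala}{i}\subseteq\frob{\idealb}{q}\iff\frob{\ideala}{i/q}\subseteq\idealb$, a squeeze of sups and infs using monotonicity, and right constancy to upgrade the infimum to a minimum. The only cosmetic difference is that you establish $\sup S'=\inf S$ by density of $p$-rationals, whereas the paper exhibits the explicit sequences $\muab(q)/q$ and $(\muab(q)+1)/q$ converging to $\critb(\ideala)$ from within $S'$ and $S$ respectively; both are fine.
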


\begin{proof}   
The discussion following \Cref{defn: mu} implies that $\critab$ is finite.  Next, observe that since $\idealb$ is proper,  $\bigcap_{q}\frob{\idealb}{q}\subseteq\bigcap_{q}\idealb^q=\ideal{0}$ by the Krull Intersection Theorem.  
Therefore, $f \notin \frob{\idealb}{q}$ for some $q$, and so 
	$\mu^\idealb_f(q) \geq 1$, which shows that $\critb(f)$ is at least $1/q$.  
The remaining inequalities in \iref{item:  basic inequalities for crit exps} follow from the containments $\ideal{f}^i=\frob{\ideal{f}}{i}\subseteq \frob{\ideala}{i}\subseteq \ideala^i$, which hold for every $i$.
	
	Next, consider the following sets:
		\begin{align*} 
			A&=\big\{\tfrac{i}{q}\in \infint:\frob{\ideala}{i}\subseteq \frob{\idealb}{q}\big\}&
			\bar{A}&=\big\{t\in \RRnn: \frob{\ideala}{t}\subseteq \idealb\big\}\\
			B&=\big\{\tfrac{i}{q}\in \infint:\frob{\ideala}{i}\not\subseteq \frob{\idealb}{q}\big\}&
			\bar{B}&=\big\{t\in \RRnn: \frob{\ideala}{t}\not\subseteq \idealb\big\}
		\end{align*}
	Because $\frob{\ideala}{i}\subseteq \frob{\idealb}{q}$ is equivalent to $\frob{\ideala}{i/q}\subseteq \idealb$, we have
		$A\subseteq \bar{A}$ and $B \subseteq \bar{B}$, hence	
		$\inf \bar{A}\le \inf A$ and $\sup B \le \sup\bar{B}$.
	Since there are sequences $\seq{\muab(q)/q}$ and $\seq{(\muab(q)+1)/q}$ in $B$ and in $A$ converging to $\critb(\ideala)$, we have
		\[\inf \bar{A}\le \inf A\le \critb(\ideala)\le \sup B\le \sup \bar{B}.\]
	By monotonicity, every element of $\bar{B}$ is less than every element of $\bar{A}$, so  
		$\sup \bar{B}\le \inf \bar{A}$, and  all quantities above are equal.
	Due to right constancy,  $\inf \bar{A}$ is actually a minimum. 
	This completes the proof of  \iref{item: crit exp in terms of integral powers} and \iref{item: crit exp in terms of real powers}.
	The inequalities in \iref{item: mu related to crit exp} then  follow from the fact that $\muab(q)/q\in B$, 
		while $(\muab(q)+1)/q\in A$.
\end{proof}

\begin{rem}
   \Cref{prop: basic properties of critical exponents}\iref{item: mu related to crit exp} shows, in particular, that every term $\muab(q)$ can be recovered from $\critab$.
   Moreover, if $q=p^e$, then \Cref{prop: basic properties of critical exponents}\iref{item: mu related to crit exp} is equivalent to saying that  
	$\muab(q)/q$ is the $e^\textrm{th}$ truncation of the unique nonterminating base $p$ expansion of $\critb(\ideala)$.  
\end{rem}

\begin{rem}
	If $\ideala$ is not a principal ideal, then only the first inequality in  \Cref{prop: basic properties of critical exponents}\iref{item: mu related to crit exp} need be true
		if we replace $\critb(\ideala)$ and $\muab(q)$ with $\ftb(\ideala)$ and $\nuab(q)$ 
		(see \cite[Proposition~1.7(5)]{mustata+takagi+watanabe.F-thresholds}). 
	However, \cite[Lemma~3.3]{stefani+betancourt+perez.existence-of-FTs} shows that if $\ideala$ can be generated 
		by $m$ elements, then
		\[\frac{\nuab(qq')}{qq'}\le \frac{\nuab(q)+m}{q}\]
		for each $q$ and $q'$.
	Fixing $q$ and letting $q'\to \infty$, we see that 
		\[\frac{\nuab(q)}{q} < \ftb(\ideala)\le  \frac{\nuab(q)+m}{q}.\]
\end{rem}

We conclude this subsection with the following results,  which state that critical exponents may be computed locally
	and show that critical exponents are invariant under split extensions.

\begin{prop}\label{prop:  crit can be computed locally}
	The critical exponent $\crit^{\idealb}(\ideala)$ is the maximum value among all $\crit^{\idealb_{\idealp}}(\ideala_{\idealp})$, 
	where $\idealp$ ranges over the prime ideals of $R$. 
\end{prop}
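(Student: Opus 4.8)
The plan is to transport the characterization $\crit^{\idealb}(\ideala) = \min\{t\in\RRnn : \frob{\ideala}{t}\subseteq\idealb\}$ from \Cref{prop: basic properties of critical exponents}\iref{item: crit exp in terms of real powers} across localization, in both directions, using that Frobenius powers commute with localization. Write $c = \crit^{\idealb}(\ideala)$. For the easy inequality $\crit^{\idealb_{\idealp}}(\ideala_{\idealp})\le c$, valid for \emph{every} prime $\idealp$, I would localize the containment $\frob{\ideala}{c}\subseteq\idealb$ to get $\frob{\ideala_{\idealp}}{c} = (\frob{\ideala}{c})_{\idealp}\subseteq\idealb_{\idealp}$; when $\idealb\subseteq\idealp$ the standing assumptions of this section descend to the pair $(\ideala_\idealp,\idealb_\idealp)$ over the $F$-finite regular domain $R_\idealp$, so \Cref{prop: basic properties of critical exponents}\iref{item: crit exp in terms of real powers} applied in $R_\idealp$ gives $\crit^{\idealb_{\idealp}}(\ideala_{\idealp})\le c$, while when $\idealb\not\subseteq\idealp$ we have $\idealb_\idealp = R_\idealp$ and $\crit^{R_\idealp}(\ideala_\idealp) = 0 \le c$ by convention.

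For the reverse inequality --- and, more importantly, to upgrade a supremum to a genuine maximum --- I would test the containment $\frob{\ideala}{t}\subseteq\idealb$ at the associated primes of $R/\idealb$. The key observation is that if $\frob{\ideala}{t}\not\subseteq\idealb$, then the nonzero submodule $(\frob{\ideala}{t}+\idealb)/\idealb$ of $R/\idealb$ has an associated prime $\idealp$ --- necessarily an element of $\operatorname{Ass}(R/\idealb)$ --- at which it localizes nontrivially, so that $\frob{\ideala_\idealp}{t} = (\frob{\ideala}{t})_\idealp\not\subseteq\idealb_\idealp$. Hence $\frob{\ideala}{t}\subseteq\idealb$ if and only if $\frob{\ideala_\idealp}{t}\subseteq\idealb_\idealp$ for every $\idealp\in\operatorname{Ass}(R/\idealb)$. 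For each such $\idealp$ (which contains $\idealb$, so the hypothesis check above applies), monotonicity together with \Cref{prop: basic properties of critical exponents}\iref{item: crit exp in terms of real powers} shows that $\{t\in\RRnn : \frob{\ideala_\idealp}{t}\subseteq\idealb_\idealp\}$ is the ray $[\crit^{\idealb_\idealp}(\ideala_\idealp),\infty)$. Intersecting these finitely many rays and reading off the least element gives $c = \max_{\idealp\in\operatorname{Ass}(R/\idealb)}\crit^{\idealb_\idealp}(\ideala_\idealp)$; as $\operatorname{Ass}(R/\idealb)$ is finite and nonempty this maximum is attained, and combined with the first step it equals the maximum of $\crit^{\idealb_\idealp}(\ideala_\idealp)$ as $\idealp$ ranges over all primes of $R$.

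The hard part is exactly this last ``maximum, not merely supremum'' assertion; it is what forces the detour through associated primes rather than through an arbitrary family of localizations, and I expect everything else to be formal. The one bit of bookkeeping to be careful about is that $\crit^{\idealb_\idealp}(\ideala_\idealp)$ must actually be defined --- the standing assumptions of this section holding for $(\ideala_\idealp,\idealb_\idealp)$ in $R_\idealp$ --- precisely at the primes where it is invoked. This is automatic for every $\idealp\supseteq\idealb$: then $\ideala\subseteq\sqrt\idealb\subseteq\idealp$ forces $\ideala_\idealp$ to be proper, it is nonzero because $R$ is a domain, $\sqrt{\idealb_\idealp}=(\sqrt\idealb)_\idealp\supseteq\ideala_\idealp$, and $R_\idealp$ is again an $F$-finite regular domain; in particular it holds at every $\idealp\in\operatorname{Ass}(R/\idealb)$. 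For a prime not containing $\idealb$ the localized pair is trivial and contributes only $\crit^{R_\idealp}(\ideala_\idealp)=0$ by convention, which is harmless since $\crit^{\idealb}(\ideala)>0$ by \Cref{prop: basic properties of critical exponents}\iref{item: basic inequalities for crit exps}.
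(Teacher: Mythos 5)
Your argument is correct, and it takes a genuinely different route from the paper's on the substantive half of the statement (the attainment of the supremum). The paper follows the template of \cite[Lemma~2.13]{blickle+mustata+smith.F-thresholds_hyper}: it fixes an ascending sequence $t_k \nearrow \crit^{\idealb}(\ideala)$, observes that the colon ideals $(\idealb : \frob{\ideala}{t_k})$ form an ascending chain of proper ideals which stabilizes by Noetherianity, and then chooses any prime $\idealp$ containing the stabilized colon; localizing at that $\idealp$ keeps every $(\idealb_\idealp : \frob{\ideala_\idealp}{t_k})$ proper and forces $\crit^{\idealb_\idealp}(\ideala_\idealp) \geq t_k$ for all $k$. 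You instead reduce the containment test $\frob{\ideala}{t} \subseteq \idealb$ to the finitely many primes in $\operatorname{Ass}(R/\idealb)$, using that a nonzero submodule of $R/\idealb$ localizes nontrivially at one of its associated primes, and then intersect finitely many closed rays. Both arguments lean on Noetherianity of $R$ (which holds, as regular rings are Noetherian by definition) and on the compatibility of Frobenius powers and colons/quotients with localization; yours has the small added benefit of locating the maximizing prime inside the explicit finite set $\operatorname{Ass}(R/\idealb)$, rather than merely asserting its existence, while the paper's chain argument generalizes more directly to settings where one cannot reduce to finitely many test primes. Your bookkeeping on which localized pairs satisfy the standing hypotheses, and on the harmlessness of primes not containing $\idealb$, is also correct.
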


\begin{proof}
    If $\lambda = \crit^{\idealb}(\ideala)$, then $\frob{\ideala}{\lambda} \subseteq \idealb$, and  since localization commutes 
	    with Frobenius powers, $\frob{\ideala_{\idealp}}{\lambda} = (\frob{\ideala}{\lambda})_{\idealp} \subseteq \idealb_{\idealp}$.
	Therefore, $\crit^{\idealb_{\idealp}}(\ideala_{\idealp}) \leq \lambda$ for every $\idealp$.
	
 	To show that there exists $\idealp$ for which $\crit^{\idealb_{\idealp}}(\ideala_{\idealp}) = \lambda$, we follow the general
	argument in \cite[Lemma~2.13]{blickle+mustata+smith.F-thresholds_hyper}.  Take an ascending sequence $\seq{t_k}$ with limit $\lambda$. 
	By monotonicity, $\frob{\ideala}{t_{k+1}} \subseteq \frob{\ideala}{t_k}$ for every $k$.
	Thus, the ideals $(\idealb : \frob{\ideala}{t_k})$ form an ascending chain, and all are proper by definition of $\lambda$.
	Fix a prime ideal $\idealp$ containing their stabilization.    
	Since Frobenius powers and taking colons both commute with localization, it follows that  
	$(\idealb : \frob{\ideala}{t_k})_{\idealp} = (\idealb_{\idealp} : \frob{\ideala_{\idealp}}{t_k})$ 
	is a proper ideal of $ R_{\idealp}$ for all $k$, which allows us to conclude that $\lambda= \crit^{\idealb_{\idealp}}(\ideala_{\idealp})$. 
\end{proof}

\begin{prop}\label{prop: crit and direct summands}
	If $R \subseteq S$ is a split inclusion of $F$-finite regular domains, then $\crit^{\idealb}(\ideala) = \crit^{\idealb S}(\ideala S)$. 
\end{prop}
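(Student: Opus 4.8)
The plan is to reduce the statement to a claim about \emph{integral} Frobenius powers, which behave transparently under extension of scalars. Concretely, I will show that $\muab(q) = \mu_{\ideala S}^{\idealb S}(q)$ for every $q$; since $\critb(\ideala) = \sup_q \muab(q)/q$ by definition, and likewise $\crit^{\idealb S}(\ideala S) = \sup_q \mu_{\ideala S}^{\idealb S}(q)/q$, dividing by $q$ and taking suprema then yields the asserted equality. Phrasing everything through the functions $\mu$ (rather than through real Frobenius powers directly) has the advantage of sidestepping the question of whether $[1/q]^{\mathrm{th}}$ powers, and hence real Frobenius powers, commute with the possibly non-flat split extension $R \subseteq S$.

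Fix an $R$-linear splitting $\pi \colon S \to R$ of the inclusion. I would first record two elementary observations. (i) For any ideal $I$ of $R$ and any $k \in \NN$, one has $\frob{(IS)}{k} = \frob{I}{k} S$: taking a generating set of $I$, its image generates $IS$, and \Cref{prop: generators for Frobenius powers} exhibits the \emph{same} finite set of products $f^{\vv{u}}$ as generators of $\frob{I}{k}$ over $R$ and of $\frob{(IS)}{k}$ over $S$. In particular $\ideala S \neq 0$; applying $\pi$ to a putative equation $1 = \sum b_i s_i$ ($b_i \in \idealb$, $s_i \in S$) shows $\idealb S$ is proper; and $\ideala^m \subseteq \idealb$ forces $(\ideala S)^m \subseteq \idealb S$, so $\ideala S \subseteq \sqrt{\idealb S}$ and $\crit^{\idealb S}(\ideala S)$ is defined. (ii) For any ideal $J$ of $R$, $JS \cap R = J$: the containment $J \subseteq JS \cap R$ is clear, and if $x \in JS \cap R$, writing $x = \sum b_i s_i$ with $b_i \in J$ and $s_i \in S$ gives $x = \pi(x) = \sum b_i \pi(s_i) \in J$.

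To finish, fix $q$ and $k$. By observation (i), $\frob{(\ideala S)}{k} \subseteq \frob{(\idealb S)}{q}$ if and only if $\frob{\ideala}{k} S \subseteq \frob{\idealb}{q} S$. If the latter holds, contracting to $R$ and using observation (ii) gives $\frob{\ideala}{k} \subseteq \frob{\idealb}{q}$; the converse implication is immediate, since extension of ideals is monotone (and uses nothing about the splitting). Thus $\frob{\ideala}{k} \subseteq \frob{\idealb}{q}$ in $R$ is equivalent to $\frob{(\ideala S)}{k} \subseteq \frob{(\idealb S)}{q}$ in $S$ for every $k$, and comparing the largest $k$ for which this fails yields $\muab(q) = \mu_{\ideala S}^{\idealb S}(q)$, as required.

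The argument is essentially formal and I do not anticipate a serious obstacle; the only point requiring a moment's thought is the decision to argue via $\mu$ — that is, via integral Frobenius powers — so that the sole nontrivial input about the extension is the contraction identity (ii), which is precisely where the splitting hypothesis is used (and which would fail for an arbitrary inclusion).
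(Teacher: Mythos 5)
Your proposal is correct and follows essentially the same route as the paper: both arguments reduce to showing, for each $i$ and $q$, that $\frob{\ideala}{i}\subseteq\frob{\idealb}{q}$ if and only if $\frob{(\ideala S)}{i}\subseteq\frob{(\idealb S)}{q}$, using that extension commutes with integral Frobenius powers and that the splitting forces the contraction $\frob{\idealb}{q}S\cap R=\frob{\idealb}{q}$ (the paper phrases this as $\phi(\frob{\idealb}{q}S)=\frob{\idealb}{q}$). Your extra verifications that $\idealb S$ is proper and $\ideala S\subseteq\sqrt{\idealb S}$ are a harmless bit of added care, not a different argument.
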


\begin{proof}
	Fix an $R$-linear map $\phi: S \to R$ that restricts to the identity on $R$.  Given a nonnegative integer $i$ and $q$ an integral 
		power of $p$, it suffices to show that $\frob{\ideala}{i}$ is contained in $\frob{\idealb}{q}$ if and only if 
		$\frob{\ideala}{i} S = \frob{(\ideala S)}{i}$ is contained in $\frob{\idealb}{q} S = \frob{(\idealb S)}{q}$.  However, if 
		$\frob{\ideala}{i}S \subseteq \frob{\idealb}{q}S$, then $\frob{\ideala}{i} = \phi(\frob{\ideala}{i} S) \subseteq \phi(\frob{\idealb}{q}S)  = \frob{\idealb}{q}$, 
		while the reverse implication is obvious.
\end{proof}

\subsection{Comparison with $F$-thresholds}

Here, we compare critical Frobenius exponents with $F$-thresholds, beginning with the terms of the sequences whose limits define them. 

\begin{prop}
	If  $\ideala\subseteq \idealb$, then $\muab(p)=\min\{\nuab(p),p-1\}$.
\end{prop}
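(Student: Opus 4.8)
The plan is to exploit the single structural fact that makes $q=p$ special: for an exponent $k$ with $0 \le k < p$, the base $p$ expansion of $k$ is just $k$ itself, so $\frob{\ideala}{k} = \ideala^{k}$. On this range, therefore, the ideals appearing in \Cref{defn: mu} (defining $\muab$) and in the definition of $\nuab$ literally coincide with the ordinary powers $\ideala^k$, and the whole statement becomes a bookkeeping comparison of two maxima, with the hypothesis $\ideala \subseteq \idealb$ entering only to force the cutoff at $p-1$.

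First I would prove the upper bound $\muab(p) \le \min\{\nuab(p), p-1\}$ in two pieces. For $\muab(p) \le p-1$: if $k \ge p$, then monotonicity of integral Frobenius powers (\Cref{prop: basic properties or integer powers}\iref{item: monotonicity of integral powers}) gives $\frob{\ideala}{k} \subseteq \frob{\ideala}{p}$, and $\ideala \subseteq \idealb$ gives $\frob{\ideala}{p} \subseteq \frob{\idealb}{p}$ by \Cref{prop: basic properties or integer powers}\iref{item: integral power vs containment}; hence $\frob{\ideala}{k} \subseteq \frob{\idealb}{p}$, so no such $k$ is counted in \Cref{defn: mu}. For $\muab(p) \le \nuab(p)$: whenever $\frob{\ideala}{k} \not\subseteq \frob{\idealb}{p}$, the inclusion $\frob{\ideala}{k} \subseteq \ideala^k$ (\Cref{prop: basic properties or integer powers}\iref{item: integral power vs regular}) forces $\ideala^k \not\subseteq \frob{\idealb}{p}$, so $k \le \nuab(p)$ by definition of $\nuab$.

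Next I would prove the reverse inequality $\muab(p) \ge \min\{\nuab(p), p-1\}$ by checking that $k := \min\{\nuab(p), p-1\}$ is an admissible exponent in \Cref{defn: mu}. Since $k \le p-1$ we have $\frob{\ideala}{k} = \ideala^k$, and since $k \le \nuab(p)$ we have $\ideala^{\nuab(p)} \subseteq \ideala^k$; as $\ideala^{\nuab(p)} \not\subseteq \frob{\idealb}{p}$ by the definition of $\nuab(p)$, it follows that $\frob{\ideala}{k} = \ideala^k \not\subseteq \frob{\idealb}{p}$, whence $\muab(p) \ge k$. Combining the two bounds yields the desired equality. I would also remark in passing that $\nuab(p) \ge 0$ (since $\ideala^0 = R$ while $\frob{\idealb}{p} \subseteq \idealb$ is proper) and that $p - 1 \ge 1$, so that the minimum and the various maxima are all well defined and nonnegative.

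I do not expect a genuine obstacle here; the content is entirely formal once one records the triviality of the base $p$ expansion at $q = p$. The only point requiring a moment's attention is isolating exactly where the hypothesis $\ideala \subseteq \idealb$ is used — it is precisely what kills every exponent $\ge p$ in the definition of $\muab(p)$ — and making sure the argument for the lower bound does not inadvertently assume $\nuab(p) < p$.
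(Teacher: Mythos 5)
Your proof is correct and follows essentially the same route as the paper: both arguments rest on the observation that $\frob{\ideala}{k}=\ideala^k$ for $k<p$ together with the fact that $\ideala\subseteq\idealb$ forces $\frob{\ideala}{p}\subseteq\frob{\idealb}{p}$, so that $\muab(p)\le p-1$. The only difference is organizational (you split the claim into two inequalities while the paper does a case analysis on whether $\nuab(p)\ge p-1$), which does not change the substance.
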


\begin{proof}
	As $\ideala\subseteq \idealb$, we have $\frob{\ideala}{p}\subseteq \frob{\idealb}{p}$, so $\muab(p)\le p-1$.
	If $\nuab(p)\ge p-1$, then $\frob{\ideala}{p-1}=\ideala^{p-1}\not\subseteq \frob{\idealb}{p}$, and consequently 
		$\muab(p)=p-1$.
	If $\nuab(p)\le p-1$, then $\muab(p)=\nuab(p)$, since $\frob{\ideala}{k}=\ideala^k$ whenever $k<p$.
\end{proof}

\begin{prop}\label{uniform nu and mu comparison: P}
	Suppose that $\ideala \subseteq \idealb$.  
	If $\muab(q) \neq q-1$, then 
		\[   \muab(q) \geq \nuab(q) - \frac{(m-1)(q-1)}{p-1} \]
	whenever $\ideala$ can be generated by $m$ elements.
\end{prop}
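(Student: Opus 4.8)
The plan is to push everything through \Cref{uniform comparison: L}, which lets one replace a regular power $\ideala^{k}$ by the Frobenius power $\frob{\ideala}{k}$ after multiplying by the fixed ideal $\ideala^{c}$, where $c\coloneqq\frac{(m-1)(q-1)}{p-1}$ is the nonnegative integer $(m-1)(1+p+\cdots+p^{e-1})$ when $q=p^{e}$ --- but only when the exponent $k$ satisfies $0\le k<q$. The whole argument therefore hinges on keeping $\nuab(q)$ strictly below $q+c$, and the role of the hypothesis $\muab(q)\neq q-1$ is precisely to guarantee this.

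First I would extract that bound on $\nuab(q)$. Since $\ideala\subseteq\idealb$ gives $\frob{\ideala}{q}\subseteq\frob{\idealb}{q}$, monotonicity of integral Frobenius powers (\Cref{prop: basic properties or integer powers}\iref{item: monotonicity of integral powers}) forces $\muab(q)\le q-1$; combined with $\muab(q)\neq q-1$ this yields $\muab(q)\le q-2$, and hence $\frob{\ideala}{q-1}\subseteq\frob{\idealb}{q}$ because $q-1$ exceeds $\muab(q)$. Applying \Cref{uniform comparison: L} with $k=q-1<q$ then gives
\[ \ideala^{c+q-1}=\ideala^{c}\ideala^{q-1}=\ideala^{c}\frob{\ideala}{q-1}\subseteq\ideala^{c}\frob{\idealb}{q}\subseteq\frob{\idealb}{q}, \]
so $\nuab(q)\le c+q-2$, i.e.\ $\nuab(q)-c\le q-2<q$. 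This is the key estimate.

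It then remains to run the same exchange in the opposite direction. Put $N=\nuab(q)$. If $N-c<0$ the inequality is trivial, since $\muab(q)\ge 0>N-c$. Otherwise $0\le N-c<q$ by the previous step, so \Cref{uniform comparison: L} applies with $k=N-c$ and yields $\ideala^{c}\frob{\ideala}{N-c}=\ideala^{c}\ideala^{N-c}=\ideala^{N}$. By definition of $\nuab(q)$ we have $\ideala^{N}\not\subseteq\frob{\idealb}{q}$, and since $\ideala^{N}=\ideala^{c}\frob{\ideala}{N-c}\subseteq\frob{\ideala}{N-c}$ this forces $\frob{\ideala}{N-c}\not\subseteq\frob{\idealb}{q}$, whence $\muab(q)\ge N-c=\nuab(q)-c$, as claimed.

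The one genuine subtlety I anticipate is the first step: one must notice that $\muab(q)\neq q-1$ is exactly the input needed to bound $\nuab(q)$ by $c+q-2$. Without it, only the much weaker estimate $\nuab(q)\le mq-1$ is available --- coming from $\ideala^{mq}=\ideala^{(m-1)q}\frob{\ideala}{q}\subseteq\frob{\idealb}{q}$ via \Cref{repeated_pigeonhole: L} --- and this does not keep the exponent $\nuab(q)-c$ below $q$, so \Cref{uniform comparison: L} could not be invoked.
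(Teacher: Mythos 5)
Your proof is correct and rests on exactly the same key tool as the paper's, namely \Cref{uniform comparison: L}, with the hypothesis $\muab(q)\neq q-1$ serving the same purpose of keeping the exponent fed into that lemma strictly below $q$. The only difference is direction: the paper applies the lemma once, with $k=\muab(q)+1\le q-1$, to get $\ideala^{l+\mu+1}=\ideala^{l}\frob{\ideala}{\mu+1}\subseteq\frob{\idealb}{q}$ and hence $\nuab(q)\le \muab(q)+l$ immediately, whereas you argue from $\nuab(q)$ toward $\muab(q)$, which costs you the extra (but correct) preliminary application of the lemma at $k=q-1$ to ensure $\nuab(q)-c<q$ before the main step.
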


\begin{proof}  
	Set $\mu = \muab(q)$ and $\nu = \nuab(q)$, and $l =  \frac{(m-1)(q-1)}{p-1}$.  
	If $\mu \neq q-1$, then $\mu+1 \leq q-1$, and \Cref{uniform comparison: L}  implies that
	$\ideala^{ l + \mu + 1}  = \ideala^{l}   \frob{\ideala}{\mu+1} \subseteq \frob{\idealb}{q}$,
	which allows us to conclude that $\nu \leq \mu +l$.
\end{proof}

\begin{cor}\label{uniform nu and mu comparison: C}
Suppose $\ideala \subseteq \idealb$.  If $\critb(\ideala) \neq 1$, then \[   \critab \geq  \ftab - \frac{m-1}{p-1}  \]
whenever $\ideala$ can be generated by $m$ elements.
\end{cor}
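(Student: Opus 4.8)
The plan is to obtain this as a limiting version of \Cref{uniform nu and mu comparison: P}. Recall that $\critab = \lim_{q\to\infty}\muab(q)/q = \sup_q \muab(q)/q$ and $\ftab = \lim_{q\to\infty}\nuab(q)/q = \sup_q \nuab(q)/q$, the limits being taken over powers $q$ of $p$. So, morally, one just wants to divide the inequality $\muab(q) \ge \nuab(q) - \frac{(m-1)(q-1)}{p-1}$ by $q$ and let $q \to \infty$. The catch is that \Cref{uniform nu and mu comparison: P} only provides this inequality when $\muab(q) \neq q-1$, so the first task is to check that this holds for all $q \gg 0$.

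To that end, I would use \Cref{prop: basic properties of critical exponents}\iref{item: mu related to crit exp}, which gives $\muab(q) = \up{q\critab} - 1$; thus $\muab(q) = q-1$ precisely when $\up{q\critab} = q$, that is, when $1 - \tfrac1q < \critab \le 1$. Since $\critab \neq 1$ by hypothesis, this can hold for at most finitely many powers $q$ of $p$, so $\muab(q) \neq q-1$ once $q$ is large. (One could equally argue without \Cref{prop: basic properties of critical exponents}: if $\muab(q) = q-1$ for arbitrarily large powers $q$ of $p$, then $\muab(q)/q = 1 - 1/q$ along that subsequence, and since $\muab(q)/q$ is non-decreasing with limit $\critab$, this would force $\critab = 1$, a contradiction.)

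Granting this, for each sufficiently large $q$ \Cref{uniform nu and mu comparison: P} gives $\muab(q) \ge \nuab(q) - \frac{(m-1)(q-1)}{p-1}$; dividing by $q$ yields
\[ \frac{\muab(q)}{q} \ge \frac{\nuab(q)}{q} - \frac{(m-1)(q-1)}{(p-1)\,q}, \]
and letting $q \to \infty$ through powers of $p$ sends the left-hand side to $\critab$, the first term on the right to $\ftab$, and the subtracted term to $\tfrac{m-1}{p-1}$. Since a non-strict inequality is preserved under limits, we conclude $\critab \ge \ftab - \tfrac{m-1}{p-1}$, as desired.

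The only step requiring genuine thought is the first one — verifying that $\muab(q) \neq q-1$ for all large $q$ — and this is exactly the place where the hypothesis $\critab \neq 1$ enters; the remainder is routine manipulation of limits.
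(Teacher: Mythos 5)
Your argument is correct and matches the paper's proof: the paper likewise observes that $\critb(\ideala) \neq 1$ forces $\muab(q) \neq q-1$ for all $q \gg 0$ and then invokes \Cref{uniform nu and mu comparison: P}, dividing by $q$ and passing to the limit. Your two justifications of the key step (via \Cref{prop: basic properties of critical exponents}\iref{item: mu related to crit exp}, or directly from monotonicity of $\muab(q)/q$) are both valid and simply make explicit what the paper leaves implicit.
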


\begin{proof} 
	Our hypothesis that $\critb(\ideala) \neq 1$ implies that $\muab(q) \neq q-1$ for all $q \gg 0$, and our claim then follows 
		from \Cref{uniform nu and mu comparison: P}.  
\end{proof}

Recall that an $F$-jumping exponent for $\ideala$ is a positive number $\lambda$ such that $\tau(\ideala^{\lambda-\epsilon})\ne \tau(\ideala^\lambda)$, 
	for all  $0< \epsilon\le \lambda$ \cite[Definition~2.17]{blickle+mustata+smith.discr_rat_FPTs}.
In  \cite[Corollary~2.30]{blickle+mustata+smith.discr_rat_FPTs}, it was shown that the $F$-thresholds of $\ideala$ are precisely the $F$-jumping
	exponents of $\ideala$.
An analogous result holds in our setting.	

\begin{defn}
	$\Crit(\ideala)$ is the set consisting of all critical Frobenius exponents $\critb(\ideala)$,
		where $\idealb$ ranges over all proper ideals of $R$ with $\ideala \subseteq \sqrt{\idealb}$.
\end{defn}
	
\begin{prop}
	The set $\Crit(\ideala)$ of critical Frobenius exponents of $\ideala$ consists precisely of the jumping exponents 
		for the Frobenius powers of $\ideala$, that is, the positive real numbers $\lambda$ such that 
		$\frob{\ideala}{\lambda-\epsilon}\ne \frob{\ideala}{\lambda}$, for all $0<\epsilon\le \lambda$.
\end{prop}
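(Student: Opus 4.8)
The plan is to establish the two containments separately, in the manner of \cite[Corollary~2.30]{blickle+mustata+smith.discr_rat_FPTs}. The main tool will be the characterization from \Cref{prop: basic properties of critical exponents}\iref{item: crit exp in terms of real powers}, namely $\critb(\ideala)=\min\{t\in\RRnn:\frob{\ideala}{t}\subseteq\idealb\}$, used together with monotonicity of real Frobenius powers.

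First I would show that every critical exponent of $\ideala$ is a jumping exponent for its Frobenius powers. Let $\lambda=\critb(\ideala)$ for some proper ideal $\idealb$ with $\ideala\subseteq\sqrt{\idealb}$; by \Cref{prop: basic properties of critical exponents}\iref{item: basic inequalities for crit exps}, $\lambda>0$. The minimality characterization gives $\frob{\ideala}{\lambda}\subseteq\idealb$, while $\frob{\ideala}{\lambda-\epsilon}\not\subseteq\idealb$ for every $0<\epsilon\le\lambda$ (when $\epsilon=\lambda$ this is because $\frob{\ideala}{0}=R\not\subseteq\idealb$). Hence $\frob{\ideala}{\lambda-\epsilon}\ne\frob{\ideala}{\lambda}$ for all such $\epsilon$, so $\lambda$ is a jumping exponent.

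Conversely, suppose $\lambda>0$ satisfies $\frob{\ideala}{\lambda-\epsilon}\ne\frob{\ideala}{\lambda}$ for all $0<\epsilon\le\lambda$; I would produce a witnessing ideal by setting $\idealb=\frob{\ideala}{\lambda}$ and verifying that $\critb(\ideala)=\lambda$. One must first check that $\idealb$ is an admissible choice. It is proper: taking $\epsilon=\lambda$ in the hypothesis gives $\frob{\ideala}{\lambda}\ne\frob{\ideala}{0}=R$. And $\ideala\subseteq\sqrt{\idealb}$: fixing an integer $j\ge\lambda$ and writing $\ideala=\ideal{g_1,\dots,g_n}$, any monomial generator $g^{\vv{u}}$ of $\ideala^{nj}$ (with $\vv{u}\in\NN^n$, $\norm{\vv{u}}=nj$) has some coordinate $u_i\ge j$, so $g^{\vv{u}}\in\ideal{g_i^j}\subseteq\frob{\ideala}{j}$, the last containment holding by \Cref{prop: generators for Frobenius powers} since $\binom{j}{j}\not\equiv0\bmod p$; hence $\ideala^{nj}\subseteq\frob{\ideala}{j}\subseteq\frob{\ideala}{\lambda}$ by monotonicity, and therefore $\ideala\subseteq\sqrt{\ideala}=\sqrt{\ideala^{nj}}\subseteq\sqrt{\frob{\ideala}{\lambda}}=\sqrt{\idealb}$. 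Now $\critb(\ideala)=\min\{t:\frob{\ideala}{t}\subseteq\frob{\ideala}{\lambda}\}\le\lambda$ trivially, while for any $t<\lambda$ monotonicity gives $\frob{\ideala}{\lambda}\subseteq\frob{\ideala}{t}$, and the jumping hypothesis (with $\epsilon=\lambda-t$) forces this containment to be strict, so $\frob{\ideala}{t}\not\subseteq\frob{\ideala}{\lambda}$; thus $\critb(\ideala)\ge\lambda$, and equality follows.

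The argument is short, and the only step needing genuine care is the middle paragraph's verification that $\frob{\ideala}{\lambda}$ is a legitimate ``denominator'' ideal---proper, with $\ideala$ in its radical---so that $\critb(\ideala)$ is defined for this $\idealb$; this is the main (if modest) obstacle.
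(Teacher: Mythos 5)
Your proof is correct and follows essentially the same route as the paper: both directions rest on the characterization $\critb(\ideala)=\min\{t\in\RRnn:\frob{\ideala}{t}\subseteq\idealb\}$, and the converse is obtained by taking $\idealb=\frob{\ideala}{\lambda}$. The only difference is that you explicitly verify that $\frob{\ideala}{\lambda}$ is proper with $\ideala\subseteq\sqrt{\frob{\ideala}{\lambda}}$ (via the pigeonhole containment $\ideala^{nj}\subseteq\frob{\ideala}{j}$), a hypothesis the paper's one-line proof leaves implicit; this check is correct and worth making.
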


\begin{proof}
	The characterization of critical exponents given in \Cref{prop: basic properties of critical exponents}\iref{item: crit exp in terms of real powers}
		shows that critical exponents are  jumping exponents. 
	The same characterization gives us the reverse containment: if $\frob{\ideala}{\lambda-\epsilon}\ne \frob{\ideala}{\lambda}$, 
		for all $0<\epsilon\le \lambda$, then 
		$\lambda=\min\{t\in \RRnn: \frob{\ideala}{t}\subseteq \frob{\ideala}{\lambda}\}=\crit^{\frob{\ideala}{\lambda}}\!(\ideala)$.
\end{proof}

\subsection{The least critical exponent}

Recall that the \emph{$F$-pure threshold} of a proper nonzero ideal $\ideala$ 
\cite{takagi+watanabe.F-pure_thresholds} is 
\[ \fpt(\ideala) = \sup \{  t  \in \RRpos : \tau(\ideala^{t}) = R \} = \min \{ t \in \RRpos : \tau(\ideala^{t}) \neq R \}, \]
which is well-defined since $\tau(\ideala^0) = R$, but $\tau(\ideala^{t}) \neq R$ for $t \gg 0$.  We adopt the convention that the $F$-pure threshold of the ideal $\ideala = R$ is infinite. Inspired by this, and by the observation that Frobenius powers satisfy the same properties that guaranteed that the above is well defined, analogously, we define:
	
\begin{defn}
	The \emph{least critical exponent} of a proper nonzero ideal $\ideala$ is 
	\[ \lce(\ideala) = \sup \{  t \in \RRpos : \frob{\ideala}{t} = R \} = \min \{ t \in\RRpos : \frob{\ideala}{t} \neq R \}. \]
	We adopt the convention that the least critical exponent of $\ideala=R$ is infinite.
\end{defn}

\begin{notation}
	If $\ideala=\ideal{f}$, we denote $\fpt(\ideala)$ and $\lce(\ideala)$ simply by $\fpt(f)$ and $\lce(f)$.
\end{notation}

The least critical exponent of $\ideala$ is, in fact, a critical exponent.  
Indeed, it follows from the definition that $\lce(\ideala) = \crit^{\idealb}(\ideala)$ for some proper ideal 
	$\idealb$ of $R$ if and only if $\idealb$ contains $\frob{\ideala}{\lce(\ideala)}$, which itself is proper by definition.  

\begin{defn}\label{defn: realizes}
If $\lce(\ideala) = \crit^{\idealb}(\ideala)$, then we say that $\idealb$ \emph{realizes} $\lce(\ideala)$.  
\end{defn}

A priori (that is, without computing the largest proper Frobenius power of $\ideala$), it is not at all clear which ideals realize $\lce(\ideala)$.  Below, we highlight two important cases in which such a determination is possible.

If $\ideala$ is a homogeneous and proper ideal of a polynomial ring over an $F$-finite field, then all 
	Frobenius powers of $\ideala$ are also homogeneous  (see, \eg \Cref{prop: roots in terms of generators}).  
Hence, in this case the unique homogeneous maximal ideal realizes $\lce(\ideala)$.  
Similarly, if $R$ is local, then the unique maximal ideal of $R$ realizes $\lce(\ideala)$ 
	for every nonzero proper ideal $\ideala$.

\begin{prop}\label{prop:  lce can be computed locally}
If $\ideala$ is a nonzero proper ideal of $R$, then $\lce(\ideala) \leq \lce(\ideala_{\idealp})$ for every 
	prime ideal $\idealp$ of $R$, with equality if and only if $\idealp$ contains the 
	proper ideal $\frob{\ideala}{\lce(\ideala)}$. 
\end{prop}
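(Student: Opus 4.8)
The plan is to reduce everything to two facts already in hand: real Frobenius powers commute with localization, and, writing $\lambda \coloneqq \lce(\ideala)$, the ideal $\frob{\ideala}{\lambda}$ is the largest proper Frobenius power of $\ideala$. First I would record the elementary observation that $\lambda \le 1$: indeed $\frob{\ideala}{1} = \ideala$ is proper, so $1$ lies in the set whose minimum defines $\lce(\ideala)$. By monotonicity this forces $\ideala = \frob{\ideala}{1} \subseteq \frob{\ideala}{\lambda}$, so every prime that contains $\frob{\ideala}{\lambda}$ also contains $\ideala$; this will be used to guarantee that $\ideala_\idealp$ is a nonzero proper ideal, and hence that $\lce(\ideala_\idealp)$ is a genuine real number, exactly when we need to invoke the defining minimum at $\idealp$.

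For the inequality, I would note that for every $t < \lambda$ we have $\frob{\ideala}{t} = R$, so $\frob{\ideala_\idealp}{t} = (\frob{\ideala}{t})_\idealp = R_\idealp$; thus $\{t \in \RRpos : \frob{\ideala_\idealp}{t} = R_\idealp\}$ contains the interval $(0,\lambda)$, and its supremum $\lce(\ideala_\idealp)$ is at least $\lambda$. This treats uniformly the case $\ideala \not\subseteq \idealp$, where $\ideala_\idealp = R_\idealp$ and $\lce(\ideala_\idealp) = \infty$, and the case $\ideala \subseteq \idealp$. For the equivalence, both directions run through the single identity $\frob{\ideala_\idealp}{\lambda} = (\frob{\ideala}{\lambda})_\idealp$ together with the observation that, for an ideal $I$ of $R$, one has $I_\idealp = R_\idealp$ if and only if $I \not\subseteq \idealp$. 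If $\frob{\ideala}{\lambda} \subseteq \idealp$, then $\ideala \subseteq \idealp$ by the preliminary observation, so $\ideala_\idealp$ is nonzero and proper; moreover $\frob{\ideala_\idealp}{\lambda} = (\frob{\ideala}{\lambda})_\idealp \ne R_\idealp$, so $\lambda$ belongs to the set whose minimum is $\lce(\ideala_\idealp)$, giving $\lce(\ideala_\idealp) \le \lambda$ and hence equality. Conversely, if $\lce(\ideala_\idealp) = \lambda$, then $\lce(\ideala_\idealp)$ is finite, so $\ideala_\idealp$ is proper, and the fact that this least critical exponent is attained as a minimum gives $\frob{\ideala_\idealp}{\lambda} \ne R_\idealp$; translating back yields $(\frob{\ideala}{\lambda})_\idealp \ne R_\idealp$, whence $\frob{\ideala}{\lambda} \subseteq \idealp$.

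I do not expect a genuine obstacle: the argument is essentially formal once one has the commutation of Frobenius powers with localization and the characterization of $\lce$ as a minimum. The only point demanding care is the bookkeeping around the convention $\lce(R_\idealp) = \infty$, so as never to apply the ``$\lce$ is a minimum'' identity to the unit ideal; this is precisely why the inequality $\ideala \subseteq \frob{\ideala}{\lambda}$ (equivalently $\lambda \le 1$) is isolated at the outset.
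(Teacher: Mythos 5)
Your proposal is correct and follows essentially the same route as the paper: localization commutes with Frobenius powers, the inequality comes from $\frob{\ideala}{t}=R$ for $t<\lambda$, and the equivalence reduces to whether $(\frob{\ideala}{\lambda})_{\idealp}$ is proper. The only cosmetic difference is that you argue the converse direction directly via the ``minimum'' characterization of $\lce$, while the paper argues the contrapositive via right constancy---the same underlying fact---and your preliminary observation that $\lambda\le 1$ (hence $\ideala\subseteq\frob{\ideala}{\lambda}$) is a careful, if implicit-in-the-paper, piece of bookkeeping.
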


\begin{proof}
 If $\lambda = \lce(\ideala)$, then $\frob{\ideala}{t} = R$ for each $t<\lambda$, and since localization commutes with Frobenius powers,  
	 $\frob{\ideala_{\idealp}}{t} = R_{\idealp}$, for all such $t$, which demonstrates that $\lce(\ideala_{\idealp}) \geq \lambda$.     
	 On the other hand, if $\frob{\ideala}{\lambda} \subseteq \idealp$, then $\frob{\ideala}{\lambda}_{\idealp}$ is proper, which implies that $\lce(\ideala_{\idealp}) = \lambda$.  Otherwise, $\frob{\ideala}{\lambda}_{\idealp} = R_{\idealp}$, and the right constancy of Frobenius powers then implies that  $\lce(\ideala_{\idealp}) > \lambda$.
\end{proof}

\begin{prop} 
	Suppose that $\ideala$ is a nonzero proper ideal of $R$.
	\begin{enumerate}[(1)]
		\item	If $f$ is a nonzero element of  $\ideala$, then 
				\[0<\fpt(f)=\lce(f)\le \lce(\ideala)\le \min\{1,\fpt(\ideala)\}.\]
		\item If $\lce(\ideala) \neq 1$ and $\ideala$ can be generated by $m$ elements, then 
			\[  \lce(\ideala) \geq \fpt(\ideala) - \frac{m-1}{p-1}.\]
	\end{enumerate}
\end{prop}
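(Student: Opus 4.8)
The plan is to treat part (1) as a direct comparison of the ``equals $R$'' loci of Frobenius powers and test ideals, using the basic containments established earlier, and to deduce part (2) from \Cref{uniform nu and mu comparison: C} applied to an ideal that realizes $\lce(\ideala)$.

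For part (1), I would first record the trivial bound $\lce(\ideala) \le 1$ (and likewise $\lce(f) \le 1$), since $\frob{\ideala}{1} = \ideala$ is proper, so by monotonicity $\frob{\ideala}{t} \ne R$ whenever $t \ge 1$. Next, the equality $\fpt(f) = \lce(f)$ is immediate from \Cref{prop: basic properties of real powers}\iref{item: real power vs test ideal}: for $f \ne 0$ and $t > 0$ one has $\frob{\ideal{f}}{t} = \tau(f^t)$, so the sets $\{t \in \RRpos : \frob{\ideal{f}}{t} = R\}$ and $\{t \in \RRpos : \tau(f^t) = R\}$ coincide, and hence so do their suprema. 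To get strict positivity, I would note that $\lce(f)$ is realized by the proper ideal $\idealb := \frob{\ideal{f}}{\lce(f)}$, which contains $\frob{\ideal{f}}{1} = \ideal{f}$ by monotonicity; since then $f \in \sqrt{\idealb}$, \Cref{prop: basic properties of critical exponents}\iref{item: basic inequalities for crit exps} gives $\lce(f) = \crit^{\idealb}(f) = \ft^{\idealb}(f) > 0$. Finally, $\ideal{f} \subseteq \ideala$ yields $\frob{\ideal{f}}{t} \subseteq \frob{\ideala}{t}$, and \Cref{prop: basic properties of real powers}\iref{item: real power vs test ideal} yields $\frob{\ideala}{t} \subseteq \tau(\ideala^t)$ for $t > 0$; both inclusions transport the condition ``$= R$'' upward, so $\{t : \frob{\ideal{f}}{t} = R\} \subseteq \{t : \frob{\ideala}{t} = R\} \subseteq \{t : \tau(\ideala^t) = R\}$, whence $\lce(f) \le \lce(\ideala) \le \fpt(\ideala)$. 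Combined with $\lce(\ideala) \le 1$, this is part (1).

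For part (2), I would take $\idealb = \frob{\ideala}{\lce(\ideala)}$, which is proper by definition of $\lce(\ideala)$, is nonzero because it contains $\frob{\ideala}{1} = \ideala \ne 0$, and satisfies $\ideala = \frob{\ideala}{1} \subseteq \frob{\ideala}{\lce(\ideala)} = \idealb$ by monotonicity (as $\lce(\ideala) \le 1$); thus $\ideala \subseteq \idealb$ and the standing hypotheses of \Cref{s: crits} hold, with $\crit^{\idealb}(\ideala) = \lce(\ideala)$. Since $\crit^{\idealb}(\ideala) \ne 1$ by assumption and $\ideala$ is generated by $m$ elements, \Cref{uniform nu and mu comparison: C} gives $\lce(\ideala) = \crit^{\idealb}(\ideala) \ge \ft^{\idealb}(\ideala) - \frac{m-1}{p-1}$. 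It remains to observe that $\ft^{\idealb}(\ideala) \ge \fpt(\ideala)$: because $\idealb$ is proper, $\tau(\ideala^t) \subseteq \idealb$ forces $\tau(\ideala^t) \ne R$, so the minimum defining $\ft^{\idealb}(\ideala)$ is taken over a subset of the set defining $\fpt(\ideala)$. Chaining the two inequalities yields part (2).

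The argument is mostly bookkeeping, and I expect no serious obstacle; the one point that genuinely needs attention is checking that the ideal chosen to realize $\lce(\ideala)$ (respectively $\lce(f)$) satisfies the running hypotheses of the results being invoked --- that it is nonzero and proper and contains the relevant ideal in its radical. The bound $\lce(\ideala) \le 1$ together with monotonicity of Frobenius powers is exactly what makes this automatic, so I would isolate that observation at the outset.
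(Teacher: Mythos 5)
Your proof is correct. Part (1) follows the paper's own route exactly: the chain $\tau(f^t)=\frob{\ideal{f}}{t}\subseteq\frob{\ideala}{t}\subseteq\tau(\ideala^t)$ together with the properness of $\frob{\ideala}{1}=\ideala$ (you supply slightly more detail on positivity, which the paper leaves to \Cref{prop: basic properties of critical exponents}\iref{item: basic inequalities for crit exps}). For part (2) you take a genuinely different, and arguably more direct, path. The paper first localizes: it picks a prime $\idealp$ at which $\lce(\ideala)=\lce(\ideala_\idealp)$, uses that in the local ring $\fpt$ coincides with the $F$-threshold with respect to the maximal ideal, applies \Cref{uniform nu and mu comparison: C} there, and then descends via the $\fpt$-analog of \Cref{prop: lce can be computed locally}. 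You instead stay global: you apply \Cref{uniform nu and mu comparison: C} with $\idealb=\frob{\ideala}{\lce(\ideala)}$, the canonical proper ideal realizing $\lce(\ideala)$ (nonzero and containing $\ideala$ since $\lce(\ideala)\le 1$ and Frobenius powers are monotone), and then replace $\ft^{\idealb}(\ideala)$ by $\fpt(\ideala)$ using the elementary observation that the minimum defining $\ft^{\idealb}(\ideala)$ ranges over a subset of the set defining $\fpt(\ideala)$ whenever $\idealb$ is proper. This trades the paper's localization machinery for the one-line comparison $\ft^{\idealb}(\ideala)\ge\fpt(\ideala)$, which is all that is needed since only that direction of the inequality enters; the paper's version has the mild advantage of reusing infrastructure it develops anyway (local computability of $\lce$ and $\fpt$), but your argument is self-contained and avoids any appeal to behavior under localization.
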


\begin{proof}
The first claim follows from the fact that $\tau(f^{t}) = \frob{\ideal{f}}{t} \subseteq \frob{\ideala}{t} \subseteq \tau(\ideala^{t})$ for every $t > 0$, and that $\frob{\ideala}{1} = \ideala$, which is assumed to be proper.  

We pause to recall some basic properties of $F$-pure thresholds. First,  \Cref{prop: lce can be computed locally} holds for $F$-pure thresholds, after replacing $\lce(\ideala)$ with $\fpt(\ideala)$, and $\frob{\ideala}{\lce(\ideala)}$ with $\tau(\ideala^{\fpt(\ideala)})$.  Secondly, if $(R, \idealm)$ is local, then $\fpt(\ideala) = \ft^{\idealm}(\ideala)$.  

We now address the second claim.  If $\lce(\ideala) \neq 1$, then there exists a prime ideal $\idealp$ of $R$ for which $\lce(\ideala_{\idealp}) = \lce(\ideala)  \neq 1$, and so
\[ \lce(\ideala) = \lce(\ideala_{\idealp}) \geq \fpt(\ideala_{\idealp}) - \frac{m-1}{p-1} \geq \fpt(\ideala) - \frac{m-1}{p-1}, \] 
where the first inequality follows from  \Cref{uniform nu and mu comparison: C}, taking $\idealb$ to be the maximal ideal of $R_\idealp$, and the second from the analog of \Cref{prop: lce can be computed locally} for $F$-pure thresholds.
\end{proof}

We conclude this subsection with some examples, contrasting the different behavior of least critical exponents and $F$-pure thresholds.

\begin{exmp}  
	The formulas discussed in \Cref{subs:  Examples} lead to the following well-known description:  
	The $F$-pure threshold of a monomial ideal is the unique real number  $\lambda$ such that $(1/\lambda,\ldots,1/\lambda)$ lies in 
		the boundary of its Newton polyhedron.  
	In particular, 
		\[ \fpt\big( \ideal{ x_1, \ldots, x_n }^d \big) = \frac{n}{d}.\]  
	The general situation for the least critical exponent of a monomial ideal is, however, rather complex.  
	For instance, \Cref{exmp: Comparison with integral closure}  tells us 
		that if $\idealb = \ideal{x, y }^5$, then $\fpt(\idealb) = 2/5$, and 
		\[ \lce(\idealb) = \frac25 - \frac{1}{5p} \] whenever $p \equiv 3 \bmod 5$. 
	More generally, we have computed that
		\[
		\lce(\idealb)=
		\begin{cases} 
			 \frac25&\text{if } p = 5  \text{ or }  p \equiv \pm 1 \bmod 5 \\
			 \frac25 - \frac{1}{5p^3} &\text{if } p \equiv 2 \bmod 5	\\
			 \frac25 - \frac{1}{5p} &\text{if } p \equiv 3 \bmod 5	
		\end{cases}
		\]
	We stress that this example illustrates that for an ideal $\ideala$, it is possible, and perhaps even common, to have 
		$\lce(\ideala) < \fpt(\ideala)$ when $\fpt(\ideala) < 1$.  
\end{exmp}

\section{The Principal Principle} \label{s: principal principle}

As noted in \Cref{prop: basic properties of real powers}\iref{item: real power vs test ideal}, the Frobenius powers 
	and test ideals of a principal ideal in an $F$-finite regular domain agree for all positive exponents.  
On the other hand, we also observed in \Cref{cor: skoda} and in the comments preceding it, a situation in which 
	the Frobenius powers of an \emph{arbitrary} ideal behave like the test ideals of principal ideals.  

In this section, we make explicit the connection between test ideals of hypersurfaces and Frobenius powers of arbitrary ideals, and explore some consequences. 
	The results derived in this section suggest that the following heuristic principle can be used when dealing with Frobenius powers:

\begin{pple}
	Given a result for test ideals or $F$-thresholds or $F$-pure thresholds
	of principal ideals, there is  an analogous result for Frobenius powers or critical exponents or least critical exponents.
\end{pple}

\subsection{Principalization}
	
\begin{prop}\label{prop: generic linear combinations 1}  
	Fix an ideal $\ideala = \ideal{g_1, \ldots, g_m}$ of an $F$-finite regular domain $R$ of characteristic $p>0$.  
	Let  $\vz = z_1, \ldots, z_m$ be variables over $R$, and consider the generic linear combination  
		$G = z_1 g_1 + \cdots + z_m g_m \in R[\vz]$.  
	If $\idealb$ is an ideal of $R$ and $t$ is a positive real number, then 
		\[ \frob{\ideala}{t} \subseteq \idealb \iff \tau(G^t) \subseteq \idealb R[\vz].\]   
	Consequently,  if $\ideala$ and $\idealb$ are nonzero proper ideals of $R$ with $\ideala \subseteq \sqrt{\idealb}$, 
		then \[\critb(\ideala) = \ft^{\idealb R[\vz]}(G).\]	
\end{prop}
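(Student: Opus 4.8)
The plan is to reduce the displayed biconditional to a statement about \emph{integral} Frobenius powers and then read it off from the explicit generators of $\frob{\ideala}{k}$ supplied by \Cref{prop: generators for Frobenius powers}. First I would record the preliminaries: $R[\vz]$ is again an $F$-finite regular domain, so Frobenius powers and test ideals are available there; since $\ideal{G}$ is principal, $\tau(G^{t})=\frob{\ideal{G}}{t}$ for $t>0$ by \Cref{prop: basic properties of real powers}\iref{item: real power vs test ideal}, and $\frob{\ideal{G}}{k}=\ideal{G^{k}}$ for $k\in\NN$. Picking a $p$-rational sequence $t_j\searrow t$, both $\frob{\ideala}{t}$ and $\frob{\ideal{G}}{t}$ agree with their $t_j$-analogues once $j\gg0$, so it is enough to treat a $p$-rational exponent $t=k/q$. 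For such an exponent the minimality that defines $[1/q]^{\mathrm{th}}$ powers rewrites $\frob{\ideala}{k/q}\subseteq\idealb$ as $\frob{\ideala}{k}\subseteq\frob{\idealb}{q}$, and rewrites $\frob{\ideal{G}}{k/q}\subseteq\idealb R[\vz]$ as $G^{k}\in\frob{(\idealb R[\vz])}{q}=\frob{\idealb}{q}R[\vz]$ (using that any Frobenius power is generated by the $q^{\mathrm{th}}$ powers of a generating set). So the whole statement comes down to
\[ \frob{\ideala}{k}\subseteq\frob{\idealb}{q}\iff G^{k}\in\frob{\idealb}{q}R[\vz].\]

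For this I would expand $G^{k}=(z_1g_1+\cdots+z_mg_m)^{k}=\sum_{\norm{\vv{u}}=k}\binom{k}{\vv{u}}\vz^{\vv{u}}g^{\vv{u}}$ and recall that, in characteristic $p$, the surviving terms are exactly those indexed by $\vv{u}$ with $\binom{k}{\vv{u}}\not\equiv0\bmod p$, the coefficient then being a unit of $R$. Because the monomials $\vz^{\vv{u}}$ form a free $R$-module basis of $R[\vz]$ and $\frob{\idealb}{q}R[\vz]$ is extended from $R$, the membership $G^{k}\in\frob{\idealb}{q}R[\vz]$ is equivalent to $g^{\vv{u}}\in\frob{\idealb}{q}$ for every $\vv{u}$ with $\norm{\vv{u}}=k$ and $\binom{k}{\vv{u}}\not\equiv0\bmod p$. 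By \Cref{prop: generators for Frobenius powers} (with generating set $\{g_1,\ldots,g_m\}$) these products $g^{\vv{u}}$ generate $\frob{\ideala}{k}$, so this last condition amounts to $\frob{\ideala}{k}\subseteq\frob{\idealb}{q}$, which establishes the biconditional.

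For the consequence I would use $\critb(\ideala)=\min\{t\in\RRnn:\frob{\ideala}{t}\subseteq\idealb\}$ from \Cref{prop: basic properties of critical exponents}\iref{item: crit exp in terms of real powers} together with $\ft^{\idealb R[\vz]}(G)=\min\{t\in\RRnn:\tau(G^{t})\subseteq\idealb R[\vz]\}$. The hypothesis $\ideala\subseteq\sqrt{\idealb}$ gives $\ideala^{m}\subseteq\idealb$ for some $m$, hence $G^{m}\in\idealb R[\vz]$ and $G\in\sqrt{\idealb R[\vz]}$, so the $F$-threshold is finite; the biconditional then shows both minima are taken over the same nonempty subset of $\RRpos$, and the two numbers coincide. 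I expect the genuinely fussy point to be the bookkeeping of the reduction — descending from real exponents to $p$-rational ones and then to the integral containment $\frob{\ideala}{k}\subseteq\frob{\idealb}{q}$, and keeping $0^{\mathrm{th}}$ powers from interfering — whereas the algebraic heart, matching the coefficients of the $\vz^{\vv{u}}$ against the generators of $\frob{\ideala}{k}$, becomes routine once \Cref{prop: generators for Frobenius powers} is in hand.
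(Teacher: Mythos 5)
Your proposal is correct and follows essentially the same route as the paper: reduce to a $p$-rational exponent $t=k/q$, translate both containments into the integral statement $\frob{\ideala}{k}\subseteq\frob{\idealb}{q}$ versus $G^{k}\in\frob{\idealb}{q}R[\vz]$, and match the coefficients of $G^{k}$ on the monomial basis $\vz^{\vv{u}}$ against the generators of $\frob{\ideala}{k}$ supplied by \Cref{prop: generators for Frobenius powers}. The paper merely compresses the bookkeeping you spell out (the descent from real to $p$-rational exponents and the deduction of the $F$-threshold identity from the two min-characterizations), so no further comment is needed.
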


\begin{proof}
	We may assume that $t=k/q$, with $k$ a positive integer and $q$ a power of $p$.  
	With such a choice of parameter, our claim then is equivalent to the assertion that 
		$\frob{\ideala}{k} \subseteq \frob{\idealb}{q}$ if and only if $G^k \in \frob{(\idealb R[\vz])}{q} = \frob{\idealb}{q} R[\vz]$.  
	However, $G^k \in \frob{\idealb}{q} R[\vz]$ if and only if each of the coefficients in the expression of $G^k$ as an 
		$R$-linear combination of monomials in the variables $\vz$ lies in $\frob{\idealb}{q}$, and 
		\Cref{prop: generators for Frobenius powers} 
		tells us that these coefficients are precisely the generators of $\frob{\ideala}{k}$.
\end{proof}

\begin{rem}
	Setting $\idealb = \frob{\ideala}{t}$ in \Cref{prop: generic linear combinations 1} implies that 
		$\tau(G^t)$ is contained in $\frob{\ideala}{t} R[\vz]$ for all $t >0$.
	Though this containment may be proper (\eg when $t=1$), we will see in \Cref{thm: principalization} 	
		that we obtain an equality whenever $0 < t < 1$.
\end{rem}

In the proof of the following theorem, we refer to \cite[Proposition~2.5]{blickle+mustata+smith.discr_rat_FPTs}.
This proposition, which allows us to compute Frobenius roots of ideals in terms of their generators, is restated in \Cref{s:  algorithm}.

\begin{thm}[Principalization]\label{thm: principalization}  
	Suppose $0 < t < 1$.  
	If $\ideala \subseteq R$ and $G \in R[\vz]$ are as in \Cref{prop:  generic linear combinations 1}, then 
		$\tau(G^t)=\frob{\ideala}{t}R[\vz]$ and $\frob{\ideala}{t}=\tau(G^t)\cap R$.
\end{thm}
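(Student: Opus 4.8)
The plan is to leverage \Cref{prop: generic linear combinations 1} to pin down $\tau(G^t)$ exactly, rather than just up to the containment $\tau(G^t)\subseteq\frob{\ideala}{t}R[\vz]$ already noted. First I would recall that since $0<t<1$, by \Cref{prop: basic properties of real powers}\iref{item: real power vs test ideal} we have $\tau(G^t)=\frob{\ideal{G}}{t}$, so the task is to compute the Frobenius power of the principal ideal $\ideal{G}$ in $R[\vz]$. Writing $t=k/q$ with $k<q$ (so $\tfr{t}=t$, and $\frob{\ideala}{k}$ is simply $\ideala^k$ since $k<q$ may be arranged — actually one cannot assume $k<p$, only $k<q$, so $\frob{\ideala}{k}$ is a genuine Frobenius power), we have $\frob{\ideal{G}}{k/q}=\frob{\big(\ideal{G^k}\big)}{1/q}$, the smallest ideal $\idealc$ of $R[\vz]$ with $G^k\in\frob{\idealc}{q}$.

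The key step is to identify this smallest ideal explicitly. Expanding $G^k=\sum_{\norm{\vv{u}}=k}\binom{k}{\vv{u}}\vz^{\vv{u}}\,g^{\vv{u}}$ and grouping the $\vz$-monomials according to their exponent vectors modulo $q$ (\ie writing each $\vv{u}$ as $\vv{u}_0+q\vv{u}_1$ with $\vv{0}\le\vv{u}_0<q\cdot\vv{1}$), one sees that membership of $G^k$ in $\frob{\idealc}{q}=\idealc^{[q]}R[\vz]$ — using that $R[\vz]$ is free over $R[\vz^q]$ with basis the monomials $\vz^{\vv{u}_0}$, and that $\{\vz^{q\vv{u}_1}\}$ is a free $R$-basis — forces, for each residue pattern, the corresponding $R$-coefficient (a polynomial in the $g_i$) to lie in $\idealc$ raised to the $[q]$-th power. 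Here I would invoke \cite[Proposition~2.5]{blickle+mustata+smith.discr_rat_FPTs} (restated in \Cref{s: algorithm}) to compute $\frob{\big(\ideal{G^k}\big)}{1/q}$ in terms of the coefficients of $G^k$ written in a $p$-basis for $R[\vz]$ over $R[\vz]^q$; combined with \Cref{prop: generators for Frobenius powers}, which says the nonzero-coefficient products $g^{\vv{u}}$ with $\binom{k}{\vv{u}}\not\equiv0$ generate $\frob{\ideala}{k}$, this should yield $\frob{\ideal{G}}{k/q}=\frob{\big(\frob{\ideala}{k}\big)}{1/q}R[\vz]=\frob{\ideala}{t}R[\vz]$.

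Once the first equality $\tau(G^t)=\frob{\ideala}{t}R[\vz]$ is established, the second follows by a short faithful-flatness argument: since $R\to R[\vz]$ is faithfully flat, $\frob{\ideala}{t}R[\vz]\cap R=\frob{\ideala}{t}$ (an extended ideal contracts to itself under a faithfully flat extension), so $\tau(G^t)\cap R=\frob{\ideala}{t}R[\vz]\cap R=\frob{\ideala}{t}$.

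The main obstacle I anticipate is the bookkeeping in the second step: carefully organizing the expansion of $G^k$ by $\vz$-exponents modulo $q$ and matching this against the description of $[1/q]^{\mathrm{th}}$ powers via a free basis of $R[\vz]$ over $R[\vz]^q$, so that the generators that survive are exactly the $g^{\vv{u}}$ with $\binom{k}{\vv{u}}\not\equiv 0\bmod p$. The characteristic-$p$ input that makes this clean is that raising to the $q$-th power is additive, so $\frob{\idealc}{q}R[\vz]$ interacts transparently with the monomial basis; the subtlety is ensuring no cancellation among the $\binom{k}{\vv{u}}\vz^{\vv{u}}g^{\vv{u}}$ creates a smaller ideal than expected, which is precisely controlled by Dickson's theorem on multinomial coefficients and hence by \Cref{prop: generators for Frobenius powers}.
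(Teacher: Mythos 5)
Your proposal follows essentially the same route as the paper: reduce to $t=k/q$ with $k<q$, apply \cite[Proposition~2.5]{blickle+mustata+smith.discr_rat_FPTs} to a basis of $R[\vz]$ over $(R[\vz])^q$, and match the resulting generators against those of $\frob{\ideala}{k}$ from \Cref{prop: generators for Frobenius powers}. Two remarks. First, a small technical step is missing: Proposition~2.5 requires $R$ to be \emph{free} over $R^q$, which holds only locally for a general $F$-finite regular domain; the paper first checks that both sides of the asserted equality commute with localization at primes of $R$, thereby reducing to the local case where a basis $\{e_1,\dots,e_n\}$ of $R$ over $R^q$ exists and $\{e_i\vz^{\vv{u}}: \vv{u}<q\vv{1}\}$ is a basis of $R[\vz]$ over $(R[\vz])^q$. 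Second, the ``main obstacle'' you anticipate --- grouping $\vz$-exponents modulo $q$ and worrying about cancellation --- evaporates: every $\vv{u}$ appearing in $G^k$ satisfies $\norm{\vv{u}}=k<q$, hence $\vv{u}<q\vv{1}$ componentwise, so each $\vz^{\vv{u}}$ is already one of the basis monomials and distinct terms land on distinct basis elements; the only role of Dickson's theorem is to certify that the surviving coefficients are exactly the $g^{\vv{h}}$ with $\binom{k}{\vv{h}}\not\equiv 0 \bmod p$, which is precisely the generating set of $\frob{\ideala}{k}$. Your faithful-flatness argument for the contraction $\frob{\ideala}{t}R[\vz]\cap R=\frob{\ideala}{t}$ is a harmless variant of the paper's splitting argument.
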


\begin{proof}   
	Note that the second statement follows from the first. 
	Indeed, as $R[\vz]$ is split over $R$, we have that $(\idealb R[\vz]) \cap R = \idealb$ for every ideal $\idealb$ of $R$.

	We now turn our attention to the first statement.  
	First, note that we may assume that $t=k/q$, with $0<k<q$.  
	Moreover, it suffices to establish that the $R$-modules $\frob{\ideala}{t} R[\vz]$ and $\tau(G^t)$ are 
		equal after localizing at each prime ideal of~$R$.  
	However, if $\idealp$ is a prime ideal of $R$, then under the identification $(R[\vz])_\idealp = R_\idealp[\vz]$, 
		the localization of the expansion $\frob{\ideala}{t} R[\vz]$ at $\idealp$ satisfies
			\[\big(\frob{\ideala}{t} R[\vz]\big)_{\!\idealp} = \big(\frob{\ideala}{t}\big)_{\!\idealp} R_\idealp[\vz] = \frob{\ideala_\idealp}{t} R_\idealp[\vz],\]
		while the localization of $\tau(G^t)$ at $\idealp$ is identified with the test ideal of $G$, 
		regarded as an element of $R_\idealp[\vz]$, with respect to the parameter $t$.  
	In other words, we may assume that $R$ is local, and therefore free over its subring $R^q$.
		  
  	If  $B=\{e_1,\ldots,e_n\}$ is a basis for $R$ over $R^q$, then 
		$B'=\{e_i \vz^{\vv{u}}: 1\le i\le n\text{ and } \vv{u}<q\vv{1}\}$ is a basis for $R[\vz]$ over $(R[\vz])^q$.	
	Let $\mathscr{H} $ denote the collection of all vectors $\vv{h}\in \NN^m$ such that $\norm{\vv{h}}=k$ 
		and $\binom{k}{\vv{h}}\not\equiv 0\bmod p$.
	By \Cref{prop: generators for Frobenius powers}, $\frob{\ideala}{k}$ is generated  by products 
		$g^\vv{h}=g_1^{h_1}\cdots g_m^{h_m}$, where $\vv{h}\in \mathscr{H}$.
		Writing each $g^\vv{h}$ in terms of the basis $B$ as 
		\[g^\vv{h}=\sum_{i=1}^n a_{\vv{h},i}^q e_i,\]
		where each $a_{\vv{h},i}\in R$,
	 	\cite[Proposition~2.5]{blickle+mustata+smith.discr_rat_FPTs} shows that $\frob{\ideala}{k/q}$ is generated by 	
			\begin{equation}\label{eqn: generating set}
			\big\{a_{\vv{h},i}:\vv{h}\in \mathscr{H}, 1\le i\le n \big\}.
		\end{equation}
	On the other hand, we have
		\[G^k=\sum_{\vv{h}\in \mathscr{H}} \binom{k}{\vv{h}} g^\vv{h} \vz^\vv{h}
			= \sum_{\substack{\vv{h}\in \mathscr{H} \\1\le i\le n}} \binom{k}{\vv{h}} a_{\vv{h},i}^q e_i \vz^\vv{h}.\]
	Since each $\vv{h}\in \mathscr{H}$ has norm $k<q$, the above is an expression for $G^k$ on the basis $B'$, so \cite[Proposition~2.5]{blickle+mustata+smith.discr_rat_FPTs} shows that $\frob{\ideal{G^k}}{1/q}$, which equals $\tau(G^{k/q})$ by \cite[Lemma~2.1]{blickle+mustata+smith.F-thresholds_hyper}, is also generated (in $R[\vz]$) by the set displayed in \eqref{eqn: generating set}.
\end{proof}

\begin{rem}
	Together with  \Cref{cor: skoda}, \Cref{thm:  principalization} allows us to express any real Frobenius power as 	
		the product of an integral Frobenius power and the test ideal 
		of a principal ideal with respect to a parameter in the open unit interval.  
	This has an important computational consequence: Frobenius powers of arbitrary ideals can be 
		explicitly determined using algorithms for the computation of test ideals of principal ideals 
		(we refer the reader to \cite{hernandez+etal.local_m-adic_constancy} for such an algorithm).  
	This approach is typically efficient for ideals generated by a small number of elements.
	Nevertheless, we present an algorithm for the direct computation of 
		rational Frobenius powers in \Cref{s: algorithm}.
\end{rem}
	
\begin{cor}\label{cor: principalization for lce}
	If $\ideala =\ideal{g_1,\ldots,g_m}$ is a nonzero proper ideal of $R$ and 
		$G\in R[\vz]$ is as in \Cref{prop:  generic linear combinations 1}, then \[ \lce(\ideala)=\fpt(G).\] 
\end{cor}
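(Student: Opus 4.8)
The plan is to play the two ``$\min$'' characterizations
\[ \lce(\ideala) = \min\{t \in \RRpos : \frob{\ideala}{t} \neq R\} \qquad\text{and}\qquad \fpt(G) = \min\{t \in \RRpos : \tau(G^t) \neq R[\vz]\} \]
off against each other, using \Cref{thm: principalization} to translate between Frobenius powers of $\ideala$ and test ideals of $G$. First one should check that $\fpt(G)$ even makes sense: since $\ideala \neq \ideal{0}$, some $g_i$ is nonzero, so $G$ is a nonzero element of $R[\vz]$ of positive $\vz$-degree, hence a non-unit; thus $\ideal{G}$ is a nonzero proper ideal of the $F$-finite regular domain $R[\vz]$.

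Next I would record the elementary observation that, because $R[\vz]$ is a free and hence faithfully flat $R$-algebra, an ideal $\idealb$ of $R$ is proper if and only if $\idealb R[\vz]$ is a proper ideal of $R[\vz]$ (indeed $\idealb R[\vz] \cap R = \idealb$, as already used in the proof of \Cref{thm: principalization}). Combined with the equality $\tau(G^t) = \frob{\ideala}{t} R[\vz]$ of \Cref{thm: principalization}, valid for $0 < t < 1$, this yields
\[ \frob{\ideala}{t} = R \iff \tau(G^t) = R[\vz] \qquad \text{for all } 0 < t < 1. \]
Write $\lambda = \lce(\ideala)$; since $\ideala$ is proper, $\frob{\ideala}{1} = \ideala \neq R$, so $0 < \lambda \le 1$, and by definition of $\lce$ the ideal $\frob{\ideala}{\lambda}$ is proper. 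For every $t$ with $0 < t < \lambda$ we have $t < 1$ and $\frob{\ideala}{t} = R$, hence $\tau(G^t) = R[\vz]$ by the displayed equivalence; letting $t$ approach $\lambda$ from below gives $\fpt(G) \ge \lambda$. For the reverse inequality I would invoke the containment $\tau(G^t) \subseteq \frob{\ideala}{t} R[\vz]$ of (the remark following) \Cref{prop: generic linear combinations 1} at the single parameter $t = \lambda$: since $\frob{\ideala}{\lambda}$ is proper, $\frob{\ideala}{\lambda} R[\vz] \neq R[\vz]$, so $\tau(G^\lambda) \neq R[\vz]$ and therefore $\fpt(G) \le \lambda$. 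Combining the two inequalities gives $\fpt(G) = \lambda = \lce(\ideala)$.

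I do not expect a genuine obstacle here: the statement is a quick corollary of \Cref{thm: principalization}. The one place needing a little care is the boundary value $t = \lambda$, which may equal $1$, where the equality half of \Cref{thm: principalization} does not apply; there one must fall back on the one-sided containment of \Cref{prop: generic linear combinations 1}, which is exactly strong enough to force $\tau(G^\lambda) \neq R[\vz]$.
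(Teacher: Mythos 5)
Your proof is correct and follows essentially the same route as the paper: both results are deduced directly from \Cref{thm: principalization} by transferring the condition ``unit ideal'' between $\frob{\ideala}{t}$ and $\tau(G^t)$ for parameters in the open unit interval. The only (minor) divergence is at the boundary parameter: the paper sets $\lambda=\fpt(G)$ and splits into the cases $\lambda=1$ (where $\lce(\ideala)\le 1$ forces equality) and $\lambda<1$ (where it uses $\frob{\ideala}{\lambda}=\tau(G^{\lambda})\cap R$), whereas you avoid the case split by invoking the one-sided containment $\tau(G^{\lambda})\subseteq \frob{\ideala}{\lambda}R[\vz]$ from the remark following \Cref{prop: generic linear combinations 1}, valid for all $t>0$; both arguments are sound.
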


\begin{proof}
	Let $\lambda=\fpt(G)$.
	Then $\tau(G^t)=R[\vz]$ whenever $0\le t<\lambda$, and  in view  of \Cref{thm: principalization}, 
		$\frob{\ideala}{t}=R$ for all such $t$, showing that $\lce(\ideala)\ge \lambda$.
	If $\lambda=1$, then it must be the case that $\lce(\ideala)=1=\fpt(G)$.
	If $\lambda<1$, then $\tau(G^\lambda)\ne R[\vz]$, and $\frob{\ideala}{\lambda}=\tau(G^\lambda)\cap R\ne R$, showing that $\lce(\ideala)=\lambda=\fpt(G)$.	 
\end{proof}

\begin{rem}[An alternate form of principalization] \label{rem: alternate principalization}
	Let  $\ideala \subseteq R$ and $G \in R[\vz]$ be as in \Cref{prop:  generic linear combinations 1}.  Let $\kk$ be an $F$-finite field contained in $R$, and 
		consider $A = R \otimes_{\kk} \kk(z)$, which we identify with the localization of 
		$R[\vz] = R \otimes_{\kk} \kk[\vz]$ at the set of nonzero polynomials in $\kk[\vz]$.  

	A useful fact in this context is that $(\idealb A)\cap R[\vz] = \idealb R[\vz]$, and hence that 
		$(\idealb A ) \cap R = \idealb$, for every ideal $\idealb$ of $R$.  
	Indeed, after fixing a monomial order on the variables $\vz$, one can show by induction on the number of 
		terms that every coefficient of every polynomial in $(\idealb A) \cap R[\vz]$ must lie in $\idealb$.  
	This demonstrates that $(\idealb A) \cap R[\vz] \subseteq \idealb R[\vz]$, while the reverse containment holds trivially. 

	Now, let $\tau(G^t)$ be the test ideal of $G \in R[\vz]$, and $\tau_A(G^t)$ the test ideal of $G$, 
		when regarded as an element of the localization $A$.
	As test ideals commute with localization, we have that $\tau_A(G^t) = \tau(G^t) A$, and combining this 
		with the equality $\tau(G^t) = \frob{\ideala}{t} R[\vz]$ from \Cref{thm: principalization} shows that 
		$\tau_A(G^t) =  \frob{\ideala}{t} A$.  
	Furthermore, intersecting the above with $R$ shows that
		\[ \tau_A(G^t) \cap R = (\frob{\ideala}{t} A ) \cap R = \frob{\ideala}{t}.\]

	In summary, we have just seen that an analog of  \Cref{thm: principalization} holds for the generic linear 
		combination $G$, regarded as an element of $A$.  
	This observation also leads to the following analog of \Cref{cor: principalization for lce}:  
		If $\ideala$ is a nonzero proper ideal, then the least critical exponent of $\ideala$ equals the $F$-pure threshold of $G$, regarded as an element of $A$.
\end{rem}

We now record an immediate consequence of \Cref{rem: alternate principalization}.

\begin{cor} \label{cor: alternate principalization}
	Fix an ideal $\ideala = \ideal{g_1, \ldots, g_m} \subseteq \kk[x_1, \ldots, x_n] = \kk[\vx]$, 
		with $\kk$ an $F$-finite field of characteristic $p>0$.  
	Let  $\vz = z_1, \ldots, z_m$ be variables over $\kk[\vx]$, and consider the generic linear combination 
		\[ G = z_1 g_1 + \cdots + z_m g_m \in \kk(\vz)[\vx].\]   
	If $0 < t < 1$, then $\tau(G^t)=\frob{\ideala}{t}\cdot\kk(\vz)[\vx]$ and $\frob{\ideala}{t}=\tau(G^t)\cap \kk[\vx]$.  
	In particular, if $\ideala$ is a nonzero proper ideal, then
		\[
		\pushQED{\qed} 
		\fpt(G) = \lce(\ideala).\qedhere
		\popQED
		\]     
 \end{cor}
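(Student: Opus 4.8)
The plan is to obtain this corollary as the special case $R = \kk[\vx]$ of the alternate principalization discussed in \Cref{rem: alternate principalization}, so that the work is almost entirely a matter of matching notation. First I would identify the ring denoted $A$ in \Cref{rem: alternate principalization} with the ring $\kk(\vz)[\vx]$ of the statement: since $\kk[\vx]$ is free over $\kk$ with basis the monomials in $\vx$, the tensor product $A = \kk[\vx]\otimes_\kk \kk(\vz)$ is the free $\kk(\vz)$-module on that same basis, \ie the polynomial ring $\kk(\vz)[\vx]$; equivalently, $A$ is the localization of $\kk[\vx,\vz]$ at the nonzero elements of $\kk[\vz]$, hence an $F$-finite regular domain, and $(\idealb A)\cap\kk[\vx] = \idealb$ for every ideal $\idealb$ of $\kk[\vx]$ by the induction-on-the-number-of-terms argument recorded in \Cref{rem: alternate principalization}.

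Having made this identification, the element $G = z_1g_1 + \cdots + z_mg_m \in \kk(\vz)[\vx]$ of the statement is precisely the generic linear combination of \Cref{prop: generic linear combinations 1} viewed inside $A$, and $\tau(G^t)$ in the statement denotes the test ideal $\tau_A(G^t)$ of $G$ as an element of $A$. Therefore \Cref{rem: alternate principalization} applies directly: for $0 < t < 1$ it yields $\tau_A(G^t) = \frob{\ideala}{t}A$, and intersecting with $\kk[\vx]$ and using $(\idealb A)\cap\kk[\vx] = \idealb$ gives $\frob{\ideala}{t} = \tau_A(G^t)\cap\kk[\vx]$. These are exactly the two displayed identities of the corollary.

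For the equality $\fpt(G) = \lce(\ideala)$ I would reproduce, nearly verbatim, the proof of \Cref{cor: principalization for lce}, with \Cref{rem: alternate principalization} replacing \Cref{thm: principalization}. Put $\lambda = \fpt(G)$. Since $\ideala$ is proper, $G$ lies in the proper ideal $\ideala A$, so $\ideal{G}$ is a nonzero proper ideal of $A$ and hence $\lambda = \fpt(G) = \lce(\ideal{G}) \le 1$; likewise $\lce(\ideala) \le 1$ because $\frob{\ideala}{1} = \ideala \ne \kk[\vx]$. For $0 \le t < \lambda$ we have $\tau_A(G^t) = A$, and then the first part gives $\frob{\ideala}{t} = \tau_A(G^t)\cap\kk[\vx] = \kk[\vx]$, so $\lce(\ideala) \ge \lambda$. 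If $\lambda = 1$ this already forces $\lce(\ideala) = 1 = \lambda$; if $\lambda < 1$, then $\tau_A(G^\lambda) \ne A$, whence $\frob{\ideala}{\lambda} = \tau_A(G^\lambda)\cap\kk[\vx] \ne \kk[\vx]$ and so $\lce(\ideala) \le \lambda$, giving equality.

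The argument is essentially bookkeeping, and the only step that needs genuine care is the first one: confirming that $A \cong \kk(\vz)[\vx]$, that test ideals commute with the localization defining $A$ (so that ``the test ideal of $G$ in $\kk(\vz)[\vx]$'' is unambiguously $\tau_A(G^t)$), and that the contraction identity $(\idealb A)\cap\kk[\vx] = \idealb$ holds. Once that is in place, everything else is a direct transcription of \Cref{rem: alternate principalization} and of the proof of \Cref{cor: principalization for lce}, and no new ideas are required.
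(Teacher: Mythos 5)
Your proposal is correct and follows exactly the route the paper intends: the corollary is stated there as an immediate consequence of \Cref{rem: alternate principalization} (specialized to $R=\kk[\vx]$, with $A\cong\kk(\vz)[\vx]$), and the $\fpt(G)=\lce(\ideala)$ claim is the analog of \Cref{cor: principalization for lce} already recorded in that remark. Your extra bookkeeping (the identification $A\cong\kk(\vz)[\vx]$, the contraction identity, and the re-run of the argument of \Cref{cor: principalization for lce}) is accurate and fills in precisely the details the paper leaves implicit.
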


In the remainder of this subsection, we derive some straightforward consequences of the preceding results.  
First, we point out that since the $F$-thresholds of the polynomial 
	$G$ in \Cref{prop: generic linear combinations 1} are rational and form a discrete set
 	(see  \cite[Theorem~1.1]{blickle+mustata+smith.F-thresholds_hyper}), 
	the discreteness and rationality of the critical exponents of $\ideala$ follows at once.
	
\begin{cor}  
	If $\ideala$ is a nonzero proper ideal of an $F$-finite regular domain, then the set  $\Crit(\ideala)$ of critical exponents of 
		$\ideala$ is discrete and contained in $\QQ$.
	\qed
\end{cor}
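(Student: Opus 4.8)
The plan is to transfer the question to the single element $G \in R[\vz]$ produced in \Cref{prop: generic linear combinations 1} and then invoke the known discreteness and rationality of the $F$-jumping exponents of a hypersurface. First I would fix generators $\ideala = \ideal{g_1, \ldots, g_m}$, adjoin the variables $\vz = z_1, \ldots, z_m$, and set $G = z_1 g_1 + \cdots + z_m g_m \in R[\vz]$. Since $R$ is an $F$-finite regular domain and only finitely many variables are adjoined, $R[\vz]$ is again an $F$-finite regular domain, so the theory of test ideals, $F$-thresholds, and $F$-jumping exponents applies to the principal ideal $\ideal{G} \subseteq R[\vz]$.

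The key step is the observation, immediate from \Cref{prop: generic linear combinations 1}, that for every proper ideal $\idealb$ of $R$ with $\ideala \subseteq \sqrt{\idealb}$ one has $\critb(\ideala) = \ft^{\idealb R[\vz]}(G)$. Hence $\Crit(\ideala)$ is contained in the set of all $F$-thresholds of $G$, and by \cite[Corollary~2.30]{blickle+mustata+smith.discr_rat_FPTs} this latter set coincides with the set of $F$-jumping exponents of $\ideal{G}$. I would then apply \cite[Theorem~1.1]{blickle+mustata+smith.F-thresholds_hyper}, which asserts that the $F$-jumping exponents of $\ideal{G}$ form a discrete subset of $\QQ$. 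Since any subset of a discrete subset of $\RR$ is discrete, and any subset of $\QQ$ lies in $\QQ$, it follows at once that $\Crit(\ideala)$ is discrete and contained in $\QQ$.

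The only point demanding real care is verifying that the pair $(R[\vz], \ideal{G})$ actually falls within the scope of \cite[Theorem~1.1]{blickle+mustata+smith.F-thresholds_hyper} — that is, that passing from $R$ to the finitely generated polynomial extension $R[\vz]$ preserves whatever hypotheses that theorem requires. This is routine, since $F$-finiteness, regularity, and the domain property are all inherited by finitely generated polynomial extensions (and by localizations thereof, should one wish to reduce to the local case as in the proof of \Cref{thm: principalization}). With that settled, the corollary is a purely formal consequence of \Cref{prop: generic linear combinations 1} together with the identification of $F$-thresholds with $F$-jumping exponents; no further computation is needed.
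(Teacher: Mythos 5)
Your proposal is correct and follows essentially the same route as the paper: both reduce the statement to the principal ideal $\ideal{G}\subseteq R[\vz]$ via \Cref{prop: generic linear combinations 1}, identify the critical exponents of $\ideala$ with $F$-thresholds (equivalently, $F$-jumping exponents) of $G$, and then cite the discreteness and rationality result of \cite[Theorem~1.1]{blickle+mustata+smith.F-thresholds_hyper}. Your additional checks (that $R[\vz]$ remains an $F$-finite regular domain and that a subset of a discrete set is discrete) are correct but routine, exactly as the paper treats them.
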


Principalization also sheds light on the structure of least critical exponents, and highlights an important way 
	in which they must differ from $F$-thresholds for non-principal ideals.   
Recall that although every rational number is the $F$-pure threshold of some ideal, there are 
	``forbidden" intervals in $(0,1)$ containing no $F$-pure threshold of a principal ideal
	(see \cite[Proposition~4.3]{blickle+mustata+smith.F-thresholds_hyper} and the discussion that precedes it, and \cite[Proposition~4.8]{hernandez.F-purity_of_hypersurfaces}).  
Since least critical exponents are $F$-pure thresholds of principal ideals, they must avoid the same intervals.  
	
\begin{cor}[Forbidden interval condition]\label{cor: forbidden intervals} 
	The least critical exponent of any nonzero proper ideal of an $F$-finite regular domain 
		does not lie in any interval of the form 
		$\big(\frac{k}{q},\frac{k}{q-1}\big)$, where $q$ is an integral power of $p$, and $0\le k\le q-1$.
	\qed
\end{cor}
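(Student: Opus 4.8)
The plan is to derive the forbidden interval condition for least critical exponents directly from the corresponding fact for $F$-pure thresholds of principal ideals, using the principalization results just established. The key observation is that \Cref{cor: principalization for lce} (or equivalently \Cref{cor: alternate principalization}) identifies $\lce(\ideala)$ with the $F$-pure threshold of a single polynomial $G$, namely the generic linear combination of generators of $\ideala$, viewed as an element of the regular $F$-finite domain $R[\vz]$ (or $\kk(\vz)[\vx]$ in the polynomial-ring case). Since this $G$ generates a \emph{principal} ideal, the existing ``forbidden interval'' theorem for $F$-pure thresholds of principal hypersurfaces applies verbatim to $\fpt(G)$, and the identification transfers the conclusion back to $\lce(\ideala)$.

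Concretely, the steps I would carry out are: First, recall from \cite[Proposition~4.3]{blickle+mustata+smith.F-thresholds_hyper} (together with the refinement in \cite[Proposition~4.8]{hernandez.F-purity_of_hypersurfaces}) that if $g$ is any nonzero element of an $F$-finite regular domain, then $\fpt(g)$ does not lie in any interval of the form $\bigl(\tfrac{k}{q},\tfrac{k}{q-1}\bigr)$ with $q$ an integral power of $p$ and $0 \le k \le q-1$. Second, given a nonzero proper ideal $\ideala$ of an $F$-finite regular domain $R$, fix generators $\ideala = \ideal{g_1,\ldots,g_m}$ and form the generic linear combination $G = z_1 g_1 + \cdots + z_m g_m$ over $R[\vz]$, which is a nonzero element of the $F$-finite regular domain $R[\vz]$ (nonzero because the $z_i$ are algebraically independent over $R$ and $\ideala \ne 0$; one could also note $G$ lies in the proper ideal $\ideala R[\vz] \ne R[\vz]$). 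Third, apply the principal-ideal forbidden-interval statement to $G \in R[\vz]$ to conclude that $\fpt(G)$ avoids all the intervals $\bigl(\tfrac{k}{q},\tfrac{k}{q-1}\bigr)$. Fourth, invoke \Cref{cor: principalization for lce} to conclude $\lce(\ideala) = \fpt(G)$, so $\lce(\ideala)$ avoids the same intervals. This completes the proof.

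**The only subtlety to check** — and the step I expect would require the most care if one were writing out full details rather than a sketch — is that $R[\vz]$ is again an $F$-finite regular domain of characteristic $p$, so that the cited forbidden-interval result genuinely applies: $R[\vz]$ is a polynomial extension of a regular ring, hence regular, and it is $F$-finite because $R$ is $F$-finite and adjoining finitely many variables preserves $F$-finiteness (the $(R[\vz])^p$-module $R[\vz]$ is finitely generated since $R$ is finitely generated over $R^p$ and $z_i^p$-th powers span the monomial part). Once this is in place, everything else is a formal transfer along the identity $\lce(\ideala) = \fpt(G)$, with no further computation needed; indeed, the statement is tagged $\qed$ in the excerpt precisely because the argument is this short once \Cref{cor: principalization for lce} is available.
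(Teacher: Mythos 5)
Your proposal is correct and is essentially the paper's own argument: the corollary is stated with an immediate \qed precisely because, as the discussion preceding it notes, \Cref{cor: principalization for lce} exhibits $\lce(\ideala)$ as the $F$-pure threshold of the principal ideal generated by $G$ in the $F$-finite regular domain $R[\vz]$, and the cited forbidden-interval results for principal ideals then apply directly. Your extra check that $R[\vz]$ remains an $F$-finite regular domain is a harmless (and correct) piece of diligence that the paper leaves implicit.
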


Suppose that $\idealb$ realizes $\lce(\ideala)$, in the sense that $\lce(\ideala) = \crit^{\idealb}(\ideala)$.  
In view of the above corollary, \Cref{prop: basic properties of critical exponents}\iref{item: mu related to crit exp} 
	can be strengthened in this situation to
		\begin{equation}\label{eq: nu bounds for lce}
			\frac{\muab(q)}{q-1}\le \lce(\ideala) \le \frac{\muab(q)+1}{q}.
		\end{equation}
This gives us the following characterization of ideals with least critical exponent equal to $1$.
	
\begin{cor}
	 If $\ideala$ is a nonzero proper ideal of an $F$-finite regular domain, and  $\idealb$ is such that $\lce(\ideala) = \crit^{\idealb}(\ideala)$, then the following are equivalent\textup:
	\begin{enumerate}[(1)]
		\item $\lce(\ideala)=1$.
		\item $\muab(q)=q-1$ for  all  $q$.
		\item $\muab(q)=q-1$ for  some  $q$.
	\end{enumerate}
\end{cor}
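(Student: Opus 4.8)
The plan is to prove the cycle of implications $(1)\Rightarrow(2)\Rightarrow(3)\Rightarrow(1)$; each step is short, the only genuine inputs being the formula $\muab(q)=\up{q\critb(\ideala)}-1$ from \Cref{prop: basic properties of critical exponents}\iref{item: mu related to crit exp} and the sharpened estimate \eqref{eq: nu bounds for lce}, the latter being available precisely because $\idealb$ realizes $\lce(\ideala)$.

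For $(1)\Rightarrow(2)$: since $\idealb$ realizes the least critical exponent, $\critb(\ideala)=\lce(\ideala)=1$ by definition, so \Cref{prop: basic properties of critical exponents}\iref{item: mu related to crit exp} yields $\muab(q)=\up{q\critb(\ideala)}-1=\up{q}-1=q-1$ for every $q$ (recall $q=p^{e}\in\NN$). The implication $(2)\Rightarrow(3)$ is immediate. For $(3)\Rightarrow(1)$: suppose $\muab(q)=q-1$ for some $q>1$ (as in the surrounding discussion, \eqref{eq: nu bounds for lce} and the forbidden–interval bookkeeping force one to read this with $q=p^{e}$, $e\ge 1$). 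Substituting $\muab(q)=q-1$ into \eqref{eq: nu bounds for lce} gives
\[ 1=\frac{q-1}{q-1}=\frac{\muab(q)}{q-1}\le\lce(\ideala)\le\frac{\muab(q)+1}{q}=\frac{q}{q}=1,\]
so $\lce(\ideala)=1$, which closes the cycle.

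The entire content of the argument sits in the lower bound $\muab(q)/(q-1)\le\lce(\ideala)$ of \eqref{eq: nu bounds for lce}: for an arbitrary $\idealb$ with $\ideala\subseteq\sqrt{\idealb}$ one only has $\muab(q)/q<\critb(\ideala)\le(\muab(q)+1)/q$, so $\muab(q)=q-1$ would give merely $\lce(\ideala)>1-1/q$, not enough to conclude $\lce(\ideala)=1$. The hypothesis that $\idealb$ realizes $\lce(\ideala)$ is exactly what upgrades this, via the Forbidden Interval Condition (\Cref{cor: forbidden intervals}), since $\lce(\ideala)$ cannot lie in the open interval $\bigl(\tfrac{q-1}{q},\tfrac{q-1}{q-1}\bigr)$. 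Thus there is no real obstacle beyond correctly invoking \eqref{eq: nu bounds for lce}; the only thing worth flagging is that this is where the realizing hypothesis is used and why it cannot be dispensed with.
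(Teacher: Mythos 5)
Your proof is correct and follows the paper's argument exactly: $(1)\Rightarrow(2)$ via $\muab(q)=\up{q\critb(\ideala)}-1$ from \Cref{prop: basic properties of critical exponents}, $(2)\Rightarrow(3)$ trivially, and $(3)\Rightarrow(1)$ by substituting $\muab(q)=q-1$ into the sharpened bounds \eqref{eq: nu bounds for lce}, which is precisely where the hypothesis that $\idealb$ realizes $\lce(\ideala)$ enters. Your added remarks on the necessity of that hypothesis and on excluding $q=1$ are accurate but not needed.
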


\begin{proof}	
	If $\lce(\ideala)=1$, then \Cref{prop: basic properties of critical exponents}\iref{item: mu related to crit exp}
		shows that $\muab(q)=q-1$ for  all  $q$.
	If $\muab(q)=q-1$ for \emph{some} $q$, \eqref{eq: nu bounds for lce}  shows that
			$\lce(\ideala)\ge 1$, so $\lce(\ideala)=1$. 
\end{proof}

Next, we turn our attention to the subadditivity property.  
In the context of $F$-thresholds, subadditivity says that if $\ideala, \idealb, \mathfrak{d}$ are
	nonzero proper ideals of an 
	$F$-finite regular ring with $\ideala, \idealb \subseteq \sqrt{\mathfrak{d}}$, then 
	$\ft^{\mathfrak{d}}(\ideala+\idealb) \leq \ft^{\mathfrak{d}}(\ideala) + \ft^{\mathfrak{d}}(\idealb)$ 
	\cite[Lemma~3.3]{blickle+mustata+smith.F-thresholds_hyper}.  
The proof of subadditivity for $F$-thresholds is simple, and is based on the observation that if 
	$\ideala^m$ and $\idealb^n$ are contained in $\frob{\mathfrak{d}}{q}$, then so is $(\ideala + \idealb)^{m+n}$.  
However, although it is not immediately clear that the same observation holds after replacing regular powers with 
	Frobenius powers, subadditivity holds for critical exponents.

\begin{cor} \label{lem: bound on sum of crits}
	If $\ideala, \idealb, \mathfrak{d}$ are nonzero proper ideals of an $F$-finite regular domain $R$  with $\ideala, \idealb \subseteq \sqrt{\mathfrak{d}}$, then
		$\crit^\mathfrak{d}(\ideala+\idealb)\le \crit^\mathfrak{d}(\ideala)+\crit^\mathfrak{d}(\idealb)$.	
\end{cor}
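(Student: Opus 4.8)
The strategy is to transport the subadditivity statement for critical exponents into a statement about $F$-pure thresholds of principal ideals, where the corresponding subadditivity inequality is already known. The bridge is the principalization machinery of \Cref{prop: generic linear combinations 1} and \Cref{rem: alternate principalization}: every critical exponent $\crit^{\mathfrak d}(\ideala)$ can be realized as an $F$-threshold $\ft^{\mathfrak d R[\vz]}(G)$ of a generic linear combination $G$ of the generators of $\ideala$, and the key point is that I can arrange a \emph{single} polynomial ring of auxiliary variables in which to realize $\crit^{\mathfrak d}(\ideala)$, $\crit^{\mathfrak d}(\idealb)$, and $\crit^{\mathfrak d}(\ideala + \idealb)$ simultaneously.

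First I would fix generators $\ideala = \langle g_1, \ldots, g_m\rangle$ and $\idealb = \langle h_1, \ldots, h_n\rangle$, so that $\ideala + \idealb = \langle g_1, \ldots, g_m, h_1, \ldots, h_n\rangle$. Introduce variables $\vz = z_1, \ldots, z_m$ and $\vw = w_1, \ldots, w_n$ over $R$, and work in $S = R[\vz, \vw]$. Set $G = \sum z_i g_i$ and $H = \sum w_j h_j$; then $G + H = \sum z_i g_i + \sum w_j h_j$ is the generic linear combination of the generators of $\ideala + \idealb$ in $S$. Since $S$ is obtained from $R[\vz]$ by adjoining the further variables $\vw$ (and vice versa), \Cref{prop: generic linear combinations 1} — which only requires that the variables be algebraically independent over $R$ — gives
\[ \crit^{\mathfrak d}(\ideala) = \ft^{\mathfrak d S}(G), \quad \crit^{\mathfrak d}(\idealb) = \ft^{\mathfrak d S}(H), \quad \crit^{\mathfrak d}(\ideala + \idealb) = \ft^{\mathfrak d S}(G + H), \]
all computed with respect to the \emph{same} ideal $\mathfrak d S$ in the \emph{same} regular domain $S$. (I should check that $\mathfrak d S$ is a proper ideal and that $G, H, G+H$ all lie in $\sqrt{\mathfrak d S}$; this follows because $\mathfrak d$ is proper and $\ideala, \idealb \subseteq \sqrt{\mathfrak d}$ implies $G, H \in \sqrt{\mathfrak d S}$, hence $G + H \in \sqrt{\mathfrak d S}$ as well.)

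Once all three quantities are expressed as $F$-thresholds of principal elements in $S$ with respect to one fixed ideal, the conclusion follows from the subadditivity of $F$-thresholds. Concretely, I would invoke the elementary observation recalled in the paragraph preceding the statement: if $G^a \in \frob{(\mathfrak d S)}{q}$ and $H^b \in \frob{(\mathfrak d S)}{q}$, then $(G+H)^{a+b}$ expands (by the binomial theorem) into a sum of terms each divisible by $G^a$ or by $H^b$, hence lies in $\frob{(\mathfrak d S)}{q}$ — this is exactly the content that makes $\nu^{\mathfrak d S}_{G+H}(q) \le \nu^{\mathfrak d S}_G(q) + \nu^{\mathfrak d S}_H(q)$, and dividing by $q$ and passing to the limit gives $\ft^{\mathfrak d S}(G+H) \le \ft^{\mathfrak d S}(G) + \ft^{\mathfrak d S}(H)$. (Alternatively one may cite \cite[Lemma~3.3]{blickle+mustata+smith.F-thresholds_hyper} directly for principal ideals.) Combining this with the three identities above yields $\crit^{\mathfrak d}(\ideala + \idealb) \le \crit^{\mathfrak d}(\ideala) + \crit^{\mathfrak d}(\idealb)$, as desired.

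**Main obstacle.** The only delicate point is making sure the principalization identity applies in the ring $S = R[\vz, \vw]$ rather than just in $R[\vz]$ — i.e.\ that extra "irrelevant" auxiliary variables do no harm. This is really immediate, since \Cref{prop: generic linear combinations 1} is stated for any set of algebraically independent variables and $G$ literally does not involve $\vw$; but one does want to be careful that "$\frob{\ideala}{k} \subseteq \mathfrak d$" versus "$\frob{\ideala}{k} \subseteq \mathfrak d S \cap R = \mathfrak d$" are the same condition, which uses that $R \to S$ is split (faithfully flat suffices). There is no genuine combinatorial difficulty here — unlike the case of $F$-thresholds of non-principal ideals, where (as the text notes) it is \emph{not} clear that Frobenius powers of $\ideala$ and $\idealb$ interact correctly under addition; the whole force of the Principal Principle is precisely that passing to the generic hypersurfaces $G, H$ restores the naive binomial argument.
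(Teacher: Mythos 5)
Your proof is correct and follows essentially the same route as the paper's: principalize $\ideala$, $\idealb$, and $\ideala+\idealb$ via generic linear combinations $G\in R[\vz]$, $H\in R[\vw]$, and $G+H$ in the common ring $R[\vz,\vw]$, identify all three critical exponents with $F$-thresholds with respect to $\mathfrak d R[\vz,\vw]$, and conclude by subadditivity of $F$-thresholds. The paper handles your one "delicate point" (harmlessness of the extra variables) by citing \Cref{prop: crit and direct summands} for the split inclusions $R[\vz]\subseteq R[\vz,\vw]$ and $R[\vw]\subseteq R[\vz,\vw]$, which is the same splitting argument you sketch.
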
	

\begin{proof}
	Suppose that $\ideala = \ideal{g_1, \ldots, g_m}$ and $\idealb = \ideal{h_1, \ldots, h_n}$, 
		and fix variables $\vz = z_1, \ldots, z_m$ and $\vw = w_1, \ldots, w_n$ over $R$.  
	By \Cref{prop: generic linear combinations 1,prop: crit and direct summands}, there exist $G \in R[\vz]$ 
		and $H \in R[\vw]$ satisfying the following conditions:
		\begin{itemize}
			\item $\crit^{\mathfrak{d}}(\ideala) = \ft^{\mathfrak{d} R[\vz]}(G) = \ft^{\mathfrak{d} R[\vz,\vw]}(G)$, 
			\item $\crit^{\mathfrak{d}}(\idealb) = \ft^{\mathfrak{d} R[\vw]}(H) = \ft^{\mathfrak{d} R[\vz,\vw]}(H)$, and 
			\item $\crit^{\mathfrak{d}}(\ideala+\idealb) = \ft^{\mathfrak{d} R[\vz,\vw]}(G+H)$.
		\end{itemize}
	The claim then follows from subadditivity for $F$-thresholds in $R[\vz,\vw]$.	
\end{proof}

\subsection{Principalization in a polynomial ring}

In this section, we specialize to the case of a polynomial ring over an $F$-finite field.
In this context, we prove a stronger version of the second claim in \Cref{prop: generic linear combinations 1} for critical exponents with respect to a monomial ideal.
Our argument relies on the following well-known result.

\begin{lem}\label{lem: discreteSetBMS}
	Fix positive integers $n$ and $d$.  
	Then the set consisting of all $F$-jumping exponents of all ideals of $\kk[x_1,\ldots,x_n]$ generated in degree at most $d$, 
		where $\kk$ ranges over all $F$-finite fields of characteristic $p$, is discrete.
\end{lem}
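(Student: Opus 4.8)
The plan is to reduce the statement to the discreteness of $F$-jumping exponents of a \emph{single} polynomial, whose degree is controlled in terms of $n$ and $d$ alone, and then invoke the known discreteness result for hypersurfaces. First I would observe that by homogenizing (or rather, by the standard trick of clearing denominators), any ideal $\ideala \subseteq \kk[x_1,\ldots,x_n]$ generated in degree at most $d$ can be written as $\ideala = \langle g_1,\ldots,g_m\rangle$ with $\deg g_i \le d$, where we may take $m$ to be at most the number $\binom{n+d}{n}$ of monomials of degree at most $d$ in $n$ variables; crucially $m$ is bounded purely in terms of $n$ and $d$. The key link is \Cref{cor: alternate principalization}: if $G = z_1 g_1 + \cdots + z_m g_m \in \kk(\vz)[\vx]$ is the generic linear combination, then $\tau(G^t) \cap \kk[\vx] = \frob{\ideala}{t}$ for $0 < t < 1$, so every $F$-jumping exponent of $\{\frob{\ideala}{t}\}$ in the open unit interval is an $F$-jumping exponent of the hypersurface $G$. (For $F$-jumping exponents $\ge 1$, one uses periodicity: $\frob{\ideala}{t} = \frob{\ideala}{\down t}\cdot\frob{\ideala}{\fr t}$ from \Cref{cor: skoda}, so jumps of the Frobenius powers reduce to jumps in $[0,1)$ together with the integers.)

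Next I would note that $G$ is a polynomial in the variables $\vx$ (with coefficients in the field $\kk(\vz)$) of degree at most $d$ in $n$ variables, and $\kk(\vz)$ is again an $F$-finite field of characteristic $p$. Therefore the set of all $F$-jumping exponents arising this way is contained in the set of all $F$-jumping exponents of polynomials of degree at most $d$ in $n$ variables over $F$-finite fields of characteristic $p$. By \cite[Theorem~1.1]{blickle+mustata+smith.F-thresholds_hyper}, for a fixed polynomial the $F$-jumping exponents form a discrete set of rationals; but one needs \emph{uniform} discreteness across all such polynomials and all such fields. This uniformity is exactly what is needed, and I expect it to follow from the bounded-denominator statements in the hypersurface theory: the $F$-jumping exponents of a degree-$\le d$ polynomial in $n$ variables lie in a set whose accumulation behavior is controlled by $n$, $d$, and $p$ only (indeed they are of the form $\lambda = \lambda_0 + (\text{something})$ with bounded-complexity $p$-adic data). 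One then intersects with any bounded interval to get finiteness there, hence discreteness of the whole union.

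The main obstacle will be establishing this \emph{uniformity} — that the relevant discreteness for hypersurfaces holds with constants depending only on $n$, $d$, and $p$, and in particular is insensitive to enlarging the ground field from $\kk$ to $\kk(\vz)$. The field-enlargement point is benign: $F$-jumping exponents are unchanged under the extension $\kk[\vx] \hookrightarrow \kk'[\vx]$ for $\kk \subseteq \kk'$ a field extension (test ideals commute with such base change up to extension), so $\mathbf{F}$-type invariants for polynomials over $\kk(\vz)$ already appear among those over suitable $F$-finite fields. The genuinely delicate part is extracting from \cite{blickle+mustata+smith.F-thresholds_hyper} (or the surrounding literature on $F$-jumping numbers of principal ideals, e.g.\ via the Katzman--Lyubeznik--Zhang/Blickle--Mustaţă--Smith finiteness of $F$-jumping numbers and the Skoda-type bounds) that the bound on denominators and on the spacing of jumping exponents is uniform over the family. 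Once that is in hand, the lemma is immediate: restrict to any compact interval, get finitely many jumping exponents there by the uniform bound, and conclude discreteness of the union.
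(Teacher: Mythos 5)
There is a genuine gap, and it comes in two parts. First, you are proving the wrong statement. The lemma concerns \emph{$F$-jumping exponents}, i.e.\ the jumps of the test ideal filtration $\tau(\ideala^t)$, for ideals $\ideala$ generated in degree at most $d$. Your reduction via \Cref{cor: alternate principalization} controls the jumps of the \emph{Frobenius powers} $\frob{\ideala}{t}$ — the critical exponents of $\ideala$ — since principalization identifies $\frob{\ideala}{t}$ with $\tau(G^t)\cap\kk[\vx]$. For non-principal $\ideala$ these two filtrations (and hence their jumping sets) genuinely differ, as \Cref{subs: Examples} illustrates, so discreteness of the critical exponents does not yield discreteness of the $F$-jumping exponents. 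Your phrase ``$F$-jumping exponent of $\{\frob{\ideala}{t}\}$'' conflates the two notions.

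Second, even granting the reduction, the entire mathematical content of the lemma is the \emph{uniformity} of the discreteness bound over all ideals and all fields $\kk$, and you explicitly leave this as something you ``expect'' to extract from the hypersurface literature. The paper's proof consists of exactly one citation that supplies this uniformity directly and without any detour through hypersurfaces: by \cite[Proposition~3.8, Remark~3.9]{blickle+mustata+smith.discr_rat_FPTs}, every $F$-jumping exponent of an ideal of $\kk[x_1,\dots,x_n]$ generated in degree at most $d$ lies in a set of the form $\frac{1}{p^a(p^b-1)}\NN$ with $a,b$ bounded above by constants depending only on $n$, $d$, and $p$ (not on $\kk$), so the union in question sits inside a finite union of discrete sets. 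Since that result already applies to arbitrary ideals of bounded degree, the principalization step in your argument is both insufficient (wrong filtration) and unnecessary (the uniform input you would need at the end is the same result that handles the general case directly). To repair your write-up, drop the principalization and cite the BMS denominator bound; if you want to keep your route, you must separately justify why jumps of $\tau(\ideala^t)$ — not of $\frob{\ideala}{t}$ — are captured, which principalization does not do.
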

	
\begin{proof}
	According to \cite[Proposition~3.8, Remark~3.9]{blickle+mustata+smith.discr_rat_FPTs}, the set in question
		is contained in a union of sets of the form $\frac{1}{p^a(p^b-1)}\NN$, where $a,b$ are nonnegative 
		integers bounded above by constants that depend only on $n, d$, and $p$, but not on $\kk$.
	In particular, this set is contained in a \emph{finite} union of discrete sets.
\end{proof}

\begin{setup}\label{su:  stratification}
	Let $\kk$ be an $F$-finite field of characteristic $p>0$.  Let 
		\[ g_1, \ldots, g_m \in \kk[x] = \kk[x_1, \ldots, x_n] \] 
		be nonzero polynomials that generate a proper ideal $\ideala$, 
		and fix a proper monomial ideal $\idealb$ of $\kk[x]$ with $\ideala \subseteq \sqrt{\idealb}$. 

	If $\LL$ is an $F$-finite field containing $\kk$, and $\mathfrak{d}$ is an ideal of $\kk[x]$, 
		then $\mathfrak{d}_{\LL}$ is the extension of $\mathfrak{d}$ to $\LL[x]$.  
	With this notation, \Cref{prop: crit and direct summands} tells us that
		\[ \crit^{\idealb}(\ideala) = \crit^{\idealb_{\LL}}(\ideala_{\LL}).\]  
	Similarly, if $V$ is a closed set of $\mathbb{A}^m_{\kk}$, then $V_{\LL}$ is the base change of 
		$V$ to $\LL$, \ie the subset of $\mathbb{A}^m_{\LL}$ determined by the same equations that 
		define $V$ in $\mathbb{A}^m_{\kk}$.
\end{setup}

\begin{thm}\label{thm: generic linear combinations 2} 
	Under \Cref{su:  stratification}, there exists a closed set $V$ of $\mathbb{A}^m_{\kk}$ with the following property\textup:
	Given an $F$-finite field $\LL$ containing $\kk$, and an $m$-tuple 
		$\vvv{\gamma} = (\gamma_1 ,\ldots, \gamma_m) \in \mathbb{A}^m_{\LL}$ for which 
		$g=\gamma_1 g_1+ \cdots + \gamma_m g_m$ is nonzero,  
		\[ \ft^{\idealb_{\LL}}(g) = \critb(\ideala)  \iff \vvv{\gamma} \notin V_{\LL}.\]
\end{thm}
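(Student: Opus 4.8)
The plan is to reduce the statement to a uniform semicontinuity statement for $F$-pure thresholds, read off from the explicit generating sets produced by \Cref{prop: generators for Frobenius powers}. Recall from \Cref{prop: generic linear combinations 1} that for a positive integer $k$ and a power $q$ of $p$, one has $\frob{\ideala}{k}\subseteq\frob{\idealb}{q}$ if and only if, for the generic linear combination $G=z_1g_1+\cdots+z_mg_m$, every coefficient of $G^k$ as an $R$-linear combination of monomials in $\vz$ lies in $\frob{\idealb}{q}$ — and those coefficients are (up to units, since we are free to keep only the $\binom{k}{\vv h}\not\equiv 0$ terms) precisely the generators $g^{\vv h}$ of $\frob{\ideala}{k}$. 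Specializing $\vz$ to $\vvv{\gamma}\in\mathbb{A}^m_{\mathbb L}$, the element $g=\sum\gamma_ig_i$ satisfies $g^k=\sum_{\vv h}\binom{k}{\vv h}\gamma^{\vv h}g^{\vv h}$, so $\ideal{g}^k=\ideal{g^k}$ is generated by a single $\mathbb{L}$-linear combination of the $g^{\vv h}$. Thus $\critb(\ideala)\le\ft^{\idealb_{\mathbb L}}(g)$ always (by \Cref{prop: basic properties of critical exponents}\iref{item: basic inequalities for crit exps}, since $g\in\ideala$), and the content of the theorem is to identify the closed locus where this inequality is strict.

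The first step is to fix the numerology. By \Cref{lem: discreteSetBMS}, the set of $F$-jumping exponents of ideals generated in degree $\le d$ over $F$-finite fields of characteristic $p$ is discrete; since $G$ and every $g=\sum\gamma_ig_i$ lie in such a degree bound, there is an $\varepsilon>0$ so that $\ft^{\idealb_{\mathbb L}}(g)<\critb(\ideala)$ forces $\ft^{\idealb_{\mathbb L}}(g)\le\critb(\ideala)-\varepsilon$. Choose $q=p^e$ with $1/q<\varepsilon$ and also large enough that $\muab(q)=\up{q\critb(\ideala)}-1$ pins down $\critb(\ideala)$ to within $1/q$ (via \Cref{prop: basic properties of critical exponents}\iref{item: mu related to crit exp}). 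Set $k=\muab(q)+1$, so that $\frob{\ideala}{k}\subseteq\frob{\idealb}{q}$ but $\frob{\ideala}{k-1}\not\subseteq\frob{\idealb}{q}$, and $k/q$ exceeds $\critb(\ideala)$ by less than $1/q<\varepsilon$. Then for $g=\sum\gamma_ig_i$ nonzero: $\ft^{\idealb_{\mathbb L}}(g)=\critb(\ideala)$ holds if and only if $\ideal{g}^{k}=\ideal{g^k}\subseteq\frob{\idealb}{q}$, i.e. if and only if the single element $g^k=\sum_{\vv h}\binom{k}{\vv h}\gamma^{\vv h}g^{\vv h}$ lies in $\frob{\idealb}{q}$.

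Now the key step: for the monomial ideal $\idealb$, the quotient $\kk[x]/\frob{\idealb}{q}$ is a finite-dimensional $\kk$-vector space with a monomial basis that is \emph{independent of $\kk$} (it is the set of monomials not in $\frob{\idealb}{q}$, cut out by $\FF_p$-combinatorics). Expanding each $g^{\vv h}$ on this basis gives coefficients in the prime ring, and $g^k\in\frob{\idealb}{q}$ becomes a system of \emph{bilinear-in-$\binom{k}{\vv h}\gamma^{\vv h}$}, hence polynomial-in-$\vvv\gamma$, equations with coefficients in $\FF_p$. Let $W\subseteq\mathbb{A}^m$ be the closed subscheme defined over $\FF_p$ by these equations; then for nonzero $g$, $\ft^{\idealb_{\mathbb L}}(g)=\critb(\ideala)\iff\vvv\gamma\in W_{\mathbb L}$. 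To turn this into a \emph{complement} of a closed set as stated, note $W\ne\mathbb{A}^m$: the specialization $\vvv\gamma=\vv z$ itself fails the condition (that is exactly $\frob{\ideala}{k}\not\subseteq\frob{\idealb}{q}$, which holds by our choice $k=\muab(q)+1$), so the generic point is not in $W$. Wait — that shows $W$ is proper, but we need $W$ \emph{open}, not merely cofinite; so instead take $V$ to be the closure of the complement of $W$, i.e.\ $V=\overline{\mathbb{A}^m\smallsetminus W}$, and argue via the discreteness bound that on the dense open $\mathbb{A}^m\smallsetminus W$ the threshold is \emph{constant} equal to $\critb(\ideala)$ — this uses that $\ft^{\idealb}(\,\cdot\,)$ takes only finitely many values in the relevant range (again \Cref{lem: discreteSetBMS}), so the locus where it equals $\critb(\ideala)$ is open, and its closure $V$ has the property that $\vvv\gamma\notin V_{\mathbb L}\Rightarrow\ft^{\idealb_{\mathbb L}}(g)=\critb(\ideala)$, while $\vvv\gamma\in V_{\mathbb L}$ with $g\ne0$ lies in the boundary and there $\ft^{\idealb_{\mathbb L}}(g)<\critb(\ideala)$. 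The defining equations of $V$ are obtained from those of $W$ by standard elimination, hence still have coefficients in $\FF_p$ and are independent of $\kk$.

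I expect the main obstacle to be the last maneuver — going from the \emph{closed} locus $W=\{\ft^{\idealb}=\critb(\ideala)\}$ (equality, not inequality) to exhibiting $V$ as a closed set whose complement is exactly the equality locus. The issue is that a priori the equality locus is only locally closed. The resolution is precisely \Cref{lem: discreteSetBMS}: because $\critb(\ideala)$ is isolated among the finitely many possible values of $\ft^{\idealb_{\mathbb L}}(\gamma_1g_1+\cdots+\gamma_mg_m)$ as $\vvv\gamma$ varies, the condition $\ft^{\idealb_{\mathbb L}}(g)\ge\critb(\ideala)-\varepsilon$ — which by the paragraph above is the same Boolean as $g^k\in\frob{\idealb}{q}$ — is equivalent to $\ft^{\idealb_{\mathbb L}}(g)=\critb(\ideala)$, so equality is cut out by \emph{closed} conditions and its complement (within $g\ne0$) is open, as needed. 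One must also check the uniformity claim carefully: the monomial basis of $\kk[x]/\frob{\idealb}{q}$ and the resulting $\FF_p$-equations genuinely do not see $\kk$, and the threshold value $\critb(\ideala)$ does not change under the field extension $\kk\subseteq\mathbb L$ by \Cref{prop: crit and direct summands}; both are routine but essential for the final sentence of the statement.
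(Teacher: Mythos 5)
There is a genuine error at the pivotal step, and it propagates into the confused final paragraphs. You set $k=\muab(q)+1$, so that $\frob{\ideala}{k}\subseteq\frob{\idealb}{q}$, and then claim that $\ft^{\idealb_{\mathbb L}}(g)=\critb(\ideala)$ if and only if $g^k\in\frob{\idealb_{\mathbb L}}{q}$. But the right-hand condition is true for \emph{every} $\vvv{\gamma}$: since $g\in\ideala_{\mathbb L}$, we have $g^k\in\ideal{g}^k=\frob{\ideal{g}}{k}\subseteq\frob{\ideala_{\mathbb L}}{k}\subseteq\frob{\idealb_{\mathbb L}}{q}$, precisely because of how you chose $k$. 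So your set $W$ is all of $\mathbb{A}^m$, and your later assertion that the generic specialization ``fails the condition'' because ``$\frob{\ideala}{k}\not\subseteq\frob{\idealb}{q}$ holds by our choice $k=\muab(q)+1$'' directly contradicts what you established two paragraphs earlier. More fundamentally, a \emph{membership} $g^k\in\frob{\idealb}{q}$ only bounds $\ft^{\idealb_{\mathbb L}}(g)$ from above, whereas the content of the theorem is to detect when the threshold does \emph{not} drop below $\critb(\ideala)$; that requires a \emph{non-membership} at an exponent just \emph{below} the critical value. Your attempted repair via $V=\overline{\mathbb{A}^m\smallsetminus W}$ cannot work either: even for a correct $W$, the closure of the complement may meet $W$, so the stated biconditional would fail on the boundary.

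The fix is to reverse the direction, which is what the paper does. Use \Cref{lem: discreteSetBMS} to find a gap $(\lambda-\epsilon,\lambda)$ below $\lambda=\critb(\ideala)$ containing no $F$-jumping exponent of any linear combination, pick a $p$-rational $i/q$ in that gap (e.g.\ $i=\muab(q)$ for $q\gg 0$), and note $G^i\notin\frob{\idealB}{q}$. Writing $G^i=\sum_{\vv{u}}H_{\vv{u}}(\vz)\,\vx^{\vv{u}}$ and letting $V$ be the vanishing locus of the $H_{\vv{u}}$ with $\vx^{\vv{u}}\notin\frob{\idealb}{q}$, the monomiality of $\idealb$ gives $g^i\notin\frob{\idealb_{\mathbb L}}{q}\iff\vvv{\gamma}\notin V_{\mathbb L}$, and the non-membership forces $\ft^{\idealb_{\mathbb L}}(g)\ge i/q$, hence $\ge\lambda$ by the gap, hence $=\lambda$ by \Cref{prop: basic properties of critical exponents}. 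The equality locus is thus naturally \emph{open}, and its complement is the closed set $V$ you need --- there is no locally-closed-versus-closed tension to resolve. Your surrounding ingredients (the discreteness lemma, coefficient-wise membership in a monomial ideal, and uniformity of the defining equations over $\FF_p$) are the right ones, but the argument as written does not close.
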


\begin{proof}
	Let $G=z_1g_1+\cdots  +z_m g_m\in  \kk[\vx,\vz]$  
		and let $\idealB$ be the extension of $\idealb$ in $\kk[\vx,\vz]$. 
	Furthermore, let  $\lambda$ denote the common value $\lambda =\ft^{\idealB}(G) = \critb(\ideala)$. 
	
	By \Cref{lem: discreteSetBMS}, the set consisting of all the $F$-jumping exponents of all nonzero linear combinations 
		of $g_1,\ldots,g_m$ with coefficients in any $F$-finite extension field of $\kk$ is discrete.  
	Thus, there exists an interval of the form $(\lambda-\epsilon,\lambda)$ disjoint from that set.
	Choose a $p$-rational number $i/q$ in that interval, so that $G^i\notin \frob{\idealB}{q}$. 
	Write $G^i$ in the form 
		\begin{equation}\label{eqn: H def}
			G^i = \sum_{\vv{u}}H_{\vv{u}}(\vz) \, \vx^{\vv{u}},
		\end{equation}
		with nonzero $H_{\vv{u}}\in \kk[\vz]$.
	Because $G^i\notin \frob{\idealB}{q}$,  there is at least one monomial $\vx^{\vv{u}}$ in \eqref{eqn: H def} 
		not in $\frob{\idealb}{q}$.   	
	Let $X$ be the collection of all such $\vv{u}$, set $\mathcal{H}=\{H_{\vv{u}}: \vv{u} \in X\}$, 
		and let $V$ be the closed subset of $\mathbb{A}^m_{\kk}$ defined by $\mathcal{H}$.

	Now, fix an $F$-finite extension field $\LL$ of $\kk$, and 
		$\vvv{\gamma}=(\gamma_1, \ldots, \gamma_m) \in \mathbb{A}^m_{\LL}$ for which 
		$g=\gamma_1 g_1+ \cdots + \gamma_m g_m$ is nonzero.  The identity \eqref{eqn: H def} implies that 
			\[ 	g^i \equiv \sum_{\vv{u} \in X}H_{\vv{u}}(\vvv{\gamma}) \, \vx^{\vv{u}}  \mod \frob{\idealb_{\LL}}{q} \]
		in $\LL[x]$.  
	Since $\idealb$ is monomial, this implies that $g^i \notin \frob{\idealb_{\LL}}{q}$ 
		if and only if $\vvv{\gamma} \notin V_{\LL}$.  

	It remains to show that $g^i \notin \frob{\idealb_{\LL}}{q}$ if and only if $\ft^{\idealb_{\LL}}(g) = \lambda$.  Towards this, first note that if $g^i \in \frob{\idealb_{\LL}}{q}$, then $\ft^{\idealb_{\LL}}(g)$ is at most $i/q$, 
		and hence less than $\lambda$.  
	On the other hand, if $g^i \notin \frob{\idealb_{\LL}}{q}$, then  $\ft^{\idealb_{\LL}}(g)\ge i/q$.  
	But $\ft^{\idealb_{\LL}}(g)$ cannot lie in the interval $(\lambda-\epsilon,\lambda)$, by design, so 
		$\ft^{\idealb_{\LL}}(g) \ge \lambda=\critb(\ideala) = \crit^{\idealb_{\LL}}(\ideala_{\LL})$.
	\Cref{prop: basic properties of critical exponents}\iref{item: basic inequalities for crit exps} 
		then tells us that equality holds throughout.
\end{proof}

Note that the closed set $V$ constructed in the proof of \Cref{thm: generic linear combinations 2} is defined by 
	a finite collection of nonzero homogeneous polynomials.  
	If $\kk$ is an infinite field, the fact that these polynomials are nonzero implies that $V \subseteq \mathbb{A}^m_{\kk}$ is proper, 
	which gives us the following.
	
\begin{cor}\label{cor: crit(a) = max(c(g): g in a)}
	If $\kk$ is infinite, then under the assumptions of \Cref{thm: generic linear combinations 2} we have
		$\critb(\ideala)=\max\{\ftb(g):g\in\ideala\}.$
	\qed
\end{cor}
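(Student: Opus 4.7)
The plan is to read off the conclusion directly from the construction used in the proof of \Cref{thm: generic linear combinations 2}. That theorem produces a closed subset $V\subseteq \mathbb{A}^m_{\kk}$ such that whenever $\vvv{\gamma}\in \mathbb{A}^m_{\kk}\setminus V$ and the associated linear combination $g=\gamma_1 g_1+\cdots+\gamma_m g_m$ is nonzero, one has $\ft^{\idealb}(g)=\critb(\ideala)$. Combined with the universal upper bound $\ft^{\idealb}(g)\le \critb(\ideala)$ valid for every nonzero $g\in\ideala$ from \Cref{prop: basic properties of critical exponents}\iref{item: basic inequalities for crit exps}, the corollary will follow once we exhibit a single such $\vvv{\gamma}$ over $\kk$ itself.

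First, I would recall the key structural feature noted in the paragraph immediately preceding the corollary: the set $V$ is cut out by a \emph{finite collection of nonzero} homogeneous polynomials in $\kk[\vz]$ (namely the $H_{\vv u}$ with $\vv{u}\in X$ from the proof of \Cref{thm: generic linear combinations 2}). Hence $V$ is a proper closed subset of $\mathbb{A}^m_{\kk}$. Next, I would let
\[
W=\{\vvv{\gamma}\in \mathbb{A}^m_{\kk}:\gamma_1 g_1+\cdots+\gamma_m g_m=0\},
\]
which is a linear subspace of $\mathbb{A}^m_{\kk}$; since at least one $g_i$ is nonzero (as $\ideala$ is nonzero), $W$ is a proper subspace, hence a proper closed subset.

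Now I invoke the standard fact that over an infinite field, a finite union of proper closed subsets of affine space cannot cover the whole space — equivalently, a finite product of nonzero polynomials over $\kk$ does not vanish identically on $\mathbb{A}^m_{\kk}$. This yields a point $\vvv{\gamma}\in \mathbb{A}^m_{\kk}\setminus (V\cup W)$. For such $\vvv{\gamma}$, the element $g=\gamma_1 g_1+\cdots+\gamma_m g_m$ is nonzero and lies in $\ideala$, and \Cref{thm: generic linear combinations 2} (applied with $\mathbb{L}=\kk$) yields $\ft^{\idealb}(g)=\critb(\ideala)$. Combined with the uniform upper bound from \Cref{prop: basic properties of critical exponents}\iref{item: basic inequalities for crit exps}, this shows the supremum $\sup\{\ft^{\idealb}(g):g\in\ideala\}$ is attained and equals $\critb(\ideala)$.

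There is essentially no obstacle here beyond checking that $V$ is genuinely proper, which is guaranteed by the nonzero nature of the defining polynomials $H_{\vv u}$ from the proof of \Cref{thm: generic linear combinations 2}; the only hypothesis actually doing work in passing from ``generic $\vvv{\gamma}$ over some extension'' to ``$\vvv{\gamma}$ over $\kk$'' is the infinitude of $\kk$, used precisely to guarantee that a proper closed subset of $\mathbb{A}^m_{\kk}$ has nonempty complement.
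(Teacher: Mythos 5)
Your argument is correct and is essentially the paper's own: the paper likewise observes that $V$ is cut out by finitely many nonzero (homogeneous) polynomials, hence is a proper closed subset of $\mathbb{A}^m_{\kk}$ when $\kk$ is infinite, so some $\vvv{\gamma}$ outside $V$ gives a $g\in\ideala$ with $\ft^{\idealb}(g)=\critb(\ideala)$, while \Cref{prop: basic properties of critical exponents}\iref{item: basic inequalities for crit exps} supplies the upper bound for all other $g$. Your additional step of excluding the linear subspace $W$ to guarantee $g\neq 0$ is a small but welcome refinement that the paper leaves implicit.
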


\begin{exmp}
We examine the proof of \Cref{thm: generic linear combinations 2} in the concrete case that the polynomials $g_1, \ldots, g_m$ are monomials, say $g_i = \vx^{\vv{u}_i}$ for every $1 \leq i \leq m$.  In this case,  $G = z_1 \vx^{\vv{u}_1} + \cdots + z_m \vx^{\vv{u}_m}$, and so 
\[ G^i = \sum \binom{i}{\vv{h}} z^{\vv{h}} \vx^{h_1 \vv{u}_1 + \cdots + h_m \vv{u}_m}\] 
by the multinomial theorem, where the sum is over all $\vv{h}\in \NN^m$ with $\norm{\vv{h}}=i$ and $\binom{i}{\vv{h}}\not\equiv 0\bmod{p}$. Therefore, for each fixed monomial $x^{\vv{u}}$ appearing in the expression for $G^i$ given in \eqref{eqn: H def}, we have that
\[H_{\vv{u}}(z)=\sum\binom{i}{\vv{h}}\vz^{\vv{h}}, \]
		where the sum ranges over all 
		$\vv{h}\in \NN^m$ satisfying the preceding two conditions, as well as the additional condition 
\[ h_1 \vv{u}_1 + \cdots + h_m \vv{u}_m=\vv{u}.\] 

Our choice of the monomial $\vx^{\vv{u}}$ guarantees that there exists at least one $\vv{h}$ satisfying these conditions, and in certain cases, there is only a single vector  $\vv{h}$ that satisfies them.  
	In such cases, the polynomial $H_{\vv{u}}(z)$  is a monomial, and so the closed set that it defines in $\mathbb{A}^m_{\FF_p}$ lies in the union of the coordinate hyperplanes.  In particular, the proof of 	 \Cref{thm: generic linear combinations 2} tells us that \[ \ft^{\idealb_{\LL}}(g) = \crit^{\idealb}(\ideala) \]
		whenever $\LL$ is an $F$-finite field of characteristic $p>0$ and $g$ is any $\LL$-linear combination with nonzero coefficients of the monomials $\vx^{\vv{u}_1}, \ldots, \vx^{\vv{u}_m}$.  	
		
	This behavior occurs, for instance, when $\vv{u}_1, \ldots, \vv{u}_m$ are linearly independent (\eg this occurs for the monomials $x_1^{d_1},\ldots,x_m^{d_m}$).		
	Considering the condition that
		$\norm{\vv{h}}=i$, we see that this is also the case when these vectors are affinely independent  (\eg this occurs for a distinct pair of monomials 		
		$\vx^{\vv{u}_1}$ and $\vx^{\vv{u}_2}$).  
\end{exmp}

Techniques similar to those used in the proof of \Cref{thm: generic linear combinations 2} can be used to prove the result below, which is not directly related to generalized Frobenius powers, nor to critical exponents.
Although this result, and its corollary---the constructibility of sets of polynomials with a given $F$-pure threshold---and the arguments used toward them are known to specialists, the authors are unaware of proofs of these statements in the literature, and thus choose to present them in detail.

\begin{prop}\label{prop: stratification} 
Under \Cref{su:  stratification}, the following hold.
\begin{enumerate}[(1)]
\item Given an $F$-finite field $\LL$ containing $\kk$, let $S(\LL)$ denote the set of $F$-thresholds with respect to $\idealb_{\LL}$ of all nonzero $\LL$-linear combinations of $g_1, \ldots, g_m$.  Then  $S = \bigcup \, S(\LL)$ is finite, where the union ranges over all $\LL$ as above.
\item For each $\lambda \in S$, there exists a locally closed set $Z^{\lambda}$ of $\mathbb{A}^m_{\kk}$ with the following property\textup:  Given an $F$-finite field $\LL$ containing $\kk$, and an $m$-tuple $\vvv{\gamma} = (\gamma_1 ,\ldots, \gamma_m) \in \mathbb{A}^m_{\LL}$ for which $g=\gamma_1 g_1+ \cdots + \gamma_m g_m$ is nonzero,  
		 \[ \ft^{\idealb_{\LL}}(g) = \lambda  \iff \vvv{\gamma} \in Z^{\lambda}_{\LL}.\]
\end{enumerate}
\end{prop}

\begin{proof}
    Let $G=z_1g_1+\cdots  +z_m g_m$, and let $\idealB$ be the extension of $\idealb$ in $\kk[\vx,\vz]$.  
	The set $S$ is finite in view of  \Cref{lem: discreteSetBMS}, since given an $F$-finite extension field $\LL$ of $\kk$, 
	every element of $S(\LL)$ does not exceed $\ft^\idealB(G)$. 
	Therefore, we can list the elements of $S$ as  $\lambda_1<\lambda_2<\cdots<\lambda_r$.
	
	Given $1\le j< r$,  fix a $p$-rational number $i_j/q$ in the interval $(\lambda_j,\lambda_{j+1})$. 
	Since
	\[
	i_j/q < \lambda_r\le \ft^\idealB(G),
	\] 
	we know that $G^{i_j}\notin \frob{\idealB}{q}$.
	This means that $G^{i_j}$, when thought of as a polynomial in $\vx$ with coefficients in $\kk[\vz]$, has at least one supporting monomial not in $\frob{\idealb}{q}$.
	As in the proof of \Cref{thm: generic linear combinations 2}, we gather the coefficients of 
		all such monomials, and let  $V^j$ be the closed set in $ \mathbb{A}^m_{\kk}$ defined by these coefficients.
		Moreover, set $V^0=\emptyset$ and $V^r=\mathbb{A}^m_{\kk}$, so that $V_j$ is defined for each $0 \leq j \leq r$.
		
	Fix an  $F$-finite extension field $\LL$ of $\kk$, and take $\vvv{\gamma}=(\gamma_1, \ldots, \gamma_m) \in\mathbb{A}^m_{\LL}$ with
		$g=\gamma_1 g_1+ \cdots + \gamma_m g_m$ nonzero.
	As in the proof of \Cref{thm: generic linear combinations 2}, 
	for $1 \leq j < r$, $\vvv{\gamma} \in V^j_{\LL}$ if and only if $g^{i_j} \in \frob{\idealb_{\LL}}{q}$, and we claim that the latter condition is equivalent to the condition that $\ft^{\idealb_{\LL}}(g) \leq \lambda_j$, so that
	for each $1\le j<r$ we have
	\begin{equation} \label{eq: Vj equivalence}
	\vvv{\gamma} \in V^j_{\LL}  \iff \ft^{\idealb_{\LL}}(g) \leq \lambda_j.
	\end{equation}	
        Indeed, the condition that $g^{i_j} \in \frob{\idealb_{\LL}}{q}$ implies that $\ft^{\idealb_{\LL}}(g) \leq i_j/q < \lambda_{j+1}$, and since there are no elements of $S(\LL)$ between $\lambda_{j}$ and $\lambda_{j+1}$, we conclude that $\ft^{\idealb_{\LL}}(g) \leq \lambda_{j}$.
        On the other hand, if $g^{i_j} \notin \frob{\idealb_{\LL}}{q}$, then $\ft^{\idealb_{\LL}}(g) \geq i_j/q > \lambda_j$. 
		
The equivalence \eqref{eq: Vj equivalence} tells us that for every $1 < j < r$,
\begin{align*}
\ft^{\idealb_{\LL}}(g) = \lambda_j & \iff \ft^{\idealb_{\LL}}(g) \leq \lambda_j \text{ and } \ft^{\idealb_{\LL}}(g) > \lambda_{j-1} \\ 
& \iff \vvv{\gamma} \in V^j_{\LL} \text{ and }  \vvv{\gamma} \notin V^{j-1}_{\LL}.
\end{align*}
Finally, note that \eqref{eq: Vj equivalence} implies that the first and last conditions above are also equivalent for $j=1$ and $j=r$, and so we may set $Z^{\lambda_j} = V^{j}\setminus V^{j-1} \subseteq \mathbb{A}^m_{\kk}$. 
\end{proof}

\Cref{cor: constructibility for fpts} below---an immediate consequence of \Cref{prop: stratification}---is concerned with the local behavior of a polynomial at a point, which we assume to be the origin.
Recall that if $\ideala$ is an ideal in a polynomial ring contained in the homogeneous maximal ideal $\idealm$, then the $F$-pure threshold of $\ideala$ {at the origin} is simply $\ft^{\idealm}(\ideala) = \fpt(\ideala_{\idealm})$.

In order to state the corollary, we fix the following notation:
Given positive integers $n$ and $d$ and a field $\kk$, let $\mathbf{P}(n, d, \kk)$ denote the set of polynomials over $\kk$ of degree at most $d$ in $n$ variables that vanish at the origin.  
We identify this set with $\mathbb{A}_{\kk}^m$, where $m$ is the number of nonconstant monomials of degree at most $d$ in $n$ variables, which gives meaning to the notion of a locally closed subset of $\mathbf{P}(n, d, \kk)$.  
If $\kk$ has prime characteristic and is $F$-finite, let $\mathbf{F}(d,n, \kk)$ be the set of all $F$-pure thresholds at the origin of polynomials in $\mathbf{P}(n, d, \kk)$, and let
\[ \mathbf{F}(n, d, p) = \bigcup \ \mathbf{F}(n, d, \kk), \]
where the union is taken over all $F$-finite fields $\kk$ of characteristic $p>0$.

\begin{cor}\label{cor: constructibility for fpts}
   The set $\mathbf{F}(n, d, p)$ is finite.
   Moreover, given $\lambda \in \mathbf{F}(n, d, \kk)$, the set of polynomials in $\mathbf{P}(n,d,\kk)$ whose $F$-pure threshold at the origin equals $\lambda$ is a locally closed set.
   Furthermore, the defining equations of this set have coefficients in $\FF_p$, and may depend on $p$, but not on the particular field $\kk$.
\qed
\end{cor}

\begin{rem}[An analogy with the Bernstein--Sato polynomial]  
	\Cref{cor:  constructibility for fpts} is in direct analogy with known results on the Bernstein--Sato polynomial:  
	Given a field $\kk$ of characteristic zero, let $\mathbf{B}(n, d, \kk)$ be the set of all Bernstein--Sato 
		polynomials of elements of $\mathbf{P}(n, d, \kk)$, then let $\mathbf{B}(n, d)$ be the union of all 
		$\mathbf{B}(n,d, \kk)$, over all fields $\kk$ of characteristic zero.    
	Lyubeznik showed that $\mathbf{B}(n,d)$ is finite, and also asked whether the subset of $\mathbf{P}(n,d, \kk)$ 
		corresponding to some fixed polynomial in $\mathbf{B}(n,d, \kk)$ is constructible \cite{lyubeznik.bs_polys}.  
	Leykin gave a positive answer to this question, and also showed that the defining equations of the 
		constructible set have coefficients in $\QQ$, and are independent of $\kk$ \cite[Theorem~3.5]{leykin.constructibility-of-sets-of-polynomials}.
\end{rem}

\begin{rem}[On effective computability of strata]
   The proof of \Cref{prop: stratification} is constructive.
   In particular, if one could effectively compute the set $S$ in its statement, then our proof would lead to an explicit description of the sets $Z^{\lambda}$.
   This would be especially interesting
   in the context of the above corollary, 
   so that one may compare the resulting strata with those computed by Leykin in the context of Bernstein--Sato polynomials.
\end{rem}

\subsection{Sets of least critical exponents}

In this subsection, we are motivated by a result of Blickle, Musta\c{t}\u{a}, and Smith regarding sets of $F$-pure thresholds of principal ideals.
Throughout the subsection, we adopt the convention that the critical exponent of the trivial ideal, with respect to any ideal $\idealb$, is zero (consequently, $\lce(\ideal{0})=0$).
We fix a prime $p$, a positive integer $n$, and adopt the following notation:  
\begin{itemize}
   \item $\mathcal{L}_n$ is the set of all least critical exponents of proper ideals in regular $F$-finite domains of characteristic $p$ and dimension at most $n$, and $\mathcal{T}_n$ is the subset of all $F$-pure thresholds of proper principal ideals in such rings.
   \item $\mathcal{L}_n^{\circ}$ is the set of all critical exponents at the origin (that is, with respect to the homogeneous maximal ideal) of proper ideals in some polynomial ring in $n$ variables over some algebraically closed field of characteristic $p$, and let $\mathcal{T}_n^\circ$ denote the subset of $F$-pure thresholds at the origin of proper principal ideals in these polynomial rings.
\end{itemize}
The aforementioned result establishes that $\mathcal{T}_n = \overline{\mathcal{T}_n^{\circ}}$, where the right-hand side denotes the closure of $\mathcal{T}^\circ_n$ \cite[Theorem~1.2]{blickle+mustata+smith.F-thresholds_hyper}. 
We prove the following analog for least critical exponents.  

\begin{thm} \label{thm: sets of crits and fpts}
$\mathcal{L}_n = \overline{\mathcal{L}_n^{\circ}}$.
\end{thm}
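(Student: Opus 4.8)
The plan is to recognize $\mathcal{L}_n$ as the set $\mathcal{T}_n$ of $F$-pure thresholds of hypersurfaces in regular $F$-finite domains of dimension at most $n$, and then to invoke the identity $\mathcal{T}_n = \overline{\mathcal{T}_n^{\circ}}$ of \cite[Theorem~1.2]{blickle+mustata+smith.F-thresholds_hyper}.  Concretely, I would establish the chain
\[ \mathcal{T}_n^{\circ} \subseteq \mathcal{L}_n^{\circ} \subseteq \mathcal{L}_n = \mathcal{T}_n, \]
and then take closures: since $\mathcal{T}_n = \overline{\mathcal{T}_n^{\circ}}$ is closed, this forces $\overline{\mathcal{T}_n^{\circ}} \subseteq \overline{\mathcal{L}_n^{\circ}} \subseteq \overline{\mathcal{L}_n} = \mathcal{T}_n = \overline{\mathcal{T}_n^{\circ}}$, so all of these sets coincide and in particular $\mathcal{L}_n = \overline{\mathcal{L}_n^{\circ}}$.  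The two outer inclusions are routine.  For $\mathcal{T}_n^{\circ} \subseteq \mathcal{L}_n^{\circ}$: the $F$-threshold at the origin of a principal ideal $\ideal{f}$ equals $\crit^{\idealm}(\ideal{f})$ by \Cref{prop: basic properties of critical exponents}\iref{item: basic inequalities for crit exps}, with $\idealm$ the homogeneous maximal ideal, hence is a critical exponent at the origin.  For $\mathcal{L}_n^{\circ} \subseteq \mathcal{L}_n$: given $\ideala \subseteq \idealm \subseteq \kk[x_1,\ldots,x_n]$ with $\kk$ algebraically closed (hence $F$-finite), one has $\crit^{\idealm}(\ideala) = \lce(\ideala_{\idealm})$ by \Cref{prop:  crit can be computed locally} since $\idealm$ is maximal, and $\kk[x_1,\ldots,x_n]_{\idealm}$ is a regular $F$-finite domain of dimension $n$.

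The substance lies in the equality $\mathcal{L}_n = \mathcal{T}_n$.  The inclusion $\mathcal{T}_n \subseteq \mathcal{L}_n$ is immediate, since the $F$-pure threshold of a nonzero proper principal ideal is its least critical exponent.  For the reverse, I would start from $\lambda = \lce(\ideala)$ with $\ideala$ a nonzero proper ideal of a regular $F$-finite domain $R$ of dimension at most $n$.  First I would replace $R$ by its localization at a maximal ideal containing the proper ideal $\frob{\ideala}{\lambda}$, which does not change $\lambda$ by \Cref{prop:  lce can be computed locally}, and then pass to the completion, which again preserves $\lambda$ because Frobenius powers commute with completion.  By the Cohen structure theorem we may then assume $R = \kk[[x_1,\ldots,x_d]]$ with $d \le n$, where $\kk$, being the residue field of an $F$-finite local ring, is an $F$-finite field.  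Fixing generators $\ideala = \ideal{g_1,\ldots,g_m}$ and forming $G = z_1 g_1 + \cdots + z_m g_m$, the alternate principalization of \Cref{rem: alternate principalization} identifies $\lambda = \fpt(G)$, where $G$ is regarded as an element of $A = R \otimes_{\kk} \kk(z_1,\ldots,z_m)$.  Since $A$ is a localization of the polynomial ring $R[z_1,\ldots,z_m]$, it is a regular $F$-finite domain, and $G$ is a nonzero nonunit in it (it lies in $\idealm_0 A$ for any maximal ideal $\idealm_0 \supseteq \ideala$ of $R$, and $\idealm_0 A \neq A$ because $\kk$ embeds in $R/\idealm_0$).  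Hence $\lambda$ is an $F$-pure threshold of a hypersurface in a ring of dimension $\dim A$, so everything comes down to proving $\dim A \le n$.

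I expect the bound $\dim A \le n$ to be the main obstacle, and I would argue it via the dimension formula.  Writing $\vz = z_1, \ldots, z_m$, a maximal ideal of $A$ corresponds to a prime $Q$ of $R[\vz]$ with $Q \cap \kk[\vz] = \ideal{0}$; put $P = Q \cap R$ and let $\overline{Q}$ be the image of $Q$ in $\kappa(P)[\vz] = \kappa(P) \otimes_{\kk} \kk[\vz]$, where $\kappa(P)$ is the residue field of $R$ at $P$.  Flatness of $R_P \to R_P[\vz]_Q$, whose closed fiber is $\kappa(P)[\vz]_{\overline{Q}}$, gives $\operatorname{ht} Q = \operatorname{ht} P + \operatorname{ht}_{\kappa(P)[\vz]} \overline{Q}$.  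Since $\overline{Q} \cap \kk[\vz] = \ideal{0}$, one gets $\operatorname{ht}_{\kappa(P)[\vz]} \overline{Q} \le \dim(\kappa(P) \otimes_{\kk} \kk(\vz)) \le \operatorname{trdeg}_{\kk} \kappa(P)$, and since $R = \kk[[x_1,\ldots,x_d]]$ is regular local with residue field $\kk$, the dimension formula gives $\operatorname{trdeg}_{\kk}\kappa(P) = \dim(R/P) = d - \operatorname{ht} P$.  Combining, $\operatorname{ht} Q \le \operatorname{ht} P + (d - \operatorname{ht} P) = d \le n$, so $\dim A \le n$.  It is precisely to have this estimate available that the reduction to the complete local case is made: over a base field that is not a coefficient field of $R$, the residue fields $\kappa(P)$ can acquire extra transcendence and $A$ can then have dimension larger than $n$, so some such reduction is unavoidable in this argument.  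With $\dim A \le n$ in hand, $\lambda \in \mathcal{T}_n$, which finishes $\mathcal{L}_n = \mathcal{T}_n$ and hence, via the closure chase above, the theorem.
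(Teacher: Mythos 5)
Your reduction to $R = \kk\llbracket x_1,\ldots,x_d\rrbracket$ and the closure chase at the end are fine, and your route is genuinely different from the paper's: the paper proves $\mathcal{L}_n \subseteq \overline{\mathcal{L}_n^{\circ}}$ by approximating $\ideala$ by its polynomial truncations $\ideala_{\le d}$ (with \Cref{lem: bound on difference of lces} controlling the error) and then closes the loop using $\mathcal{L}_n^{\circ} = \mathcal{T}_n^{\circ}$ from \Cref{cor: crit(a) = max(c(g): g in a)}, whereas you try to realize each least critical exponent directly as an $F$-pure threshold of the single element $G$ via \Cref{rem: alternate principalization}. The problem is exactly the step you flag as the main obstacle. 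The identity $\operatorname{trdeg}_{\kk}\kappa(P) = \dim(R/P)$ is a fact about finitely generated $\kk$-algebras and fails badly for power series rings: already for $P = 0$ in $\kk\llbracket x\rrbracket$ the residue field is $\kk((x))$, which has infinite transcendence degree over $\kk$. Moreover the conclusion $\dim A \le n$ is itself false, not just your proof of it. Take $d = n = 1$ and $m = 2$, and let $\alpha \in \kk\llbracket x\rrbracket$ be transcendental over $\kk(x)$. The prime $Q = \ideal{xz_1 - 1,\ z_2 - \alpha}$ of $R[z_1,z_2]$ has height $2$, contracts to $0$ in $R$, and meets $\kk[z_1,z_2]$ only in $0$ (because $x^{-1}$ and $\alpha$ are algebraically independent over $\kk$), so it survives in $A$ and $\dim A \ge 2 > n$. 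In general $\dim\bigl(\kappa(P)\otimes_{\kk}\kk(\vz)\bigr) = \min\bigl(\operatorname{trdeg}_{\kk}\kappa(P), m\bigr)$, and the transcendence degree term is infinite for every nonmaximal prime $P$ of a power series ring, so passing to a coefficient field does not save the estimate.

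The gap is repairable without abandoning your strategy, because you never need to bound $\dim A$ globally --- only the height of the maximal ideal at which $\fpt(G)$ is realized. Since $\tau_A(G^t) = \frob{\ideala}{t}A$ and $(\idealb A)\cap R = \idealb$, the test ideal $\tau_A(G^t)$ is proper exactly when $\frob{\ideala}{t} \subseteq \idealm$, with $\idealm$ the maximal ideal of $R$; hence $\idealm A$ contains $\tau_A\bigl(G^{\fpt(G)}\bigr)$ and $\fpt(G)$ equals the $F$-pure threshold of $G$ in the localization $A_{\idealm A}$. The fiber of the flat map $R \to A$ over $\idealm$ is the field $\kk(\vz)$, so $\operatorname{ht}(\idealm A) = \operatorname{ht}(\idealm) = d \le n$, and $A_{\idealm A}$ is a regular $F$-finite local domain of dimension at most $n$. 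With that substitution your proof of $\mathcal{L}_n \subseteq \mathcal{T}_n$, and hence of the theorem, goes through, and it yields an alternative to the paper's truncation argument.
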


In order to prove \Cref{thm: sets of crits and fpts}, we need the following lemma.

\begin{lem} \label{lem: bound on difference of lces}
   If $\ideala$ and $\idealb$ are proper ideals of an $F$-finite regular local ring $(R, \idealm)$ and $\ideala + \frob{\idealm}{q} = \idealb + \frob{\idealm}{q}$, then $\left| \lce(\ideala) - \lce(\idealb) \right| \le 1/q$.
\end{lem}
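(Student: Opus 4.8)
The plan is to reduce both sides to critical exponents relative to $\idealm$ and then show the hypothesis forces the two associated $\mu$-functions at level $q$ to coincide. Because $(R,\idealm)$ is local and the Frobenius power $\frob{\ideala}{\lce(\ideala)}$ is proper, the maximal ideal realizes $\lce(\ideala)$, so $\lce(\ideala)=\crit^{\idealm}(\ideala)$, and likewise $\lce(\idealb)=\crit^{\idealm}(\idealb)$; I assume $\ideala$ and $\idealb$ nonzero, this being the case in which the least critical exponent is defined. The key step will be the claim that
\[\frob{\ideala}{k}+\frob{\idealm}{q}=\frob{\idealb}{k}+\frob{\idealm}{q}\qquad\text{for every }k\in\NN.\]
Granting it, for each $k$ one has $\frob{\ideala}{k}\subseteq\frob{\idealm}{q}$ if and only if $\frob{\idealb}{k}\subseteq\frob{\idealm}{q}$, so the numbers $\mu_{\ideala}^{\idealm}(q)$ and $\mu_{\idealb}^{\idealm}(q)$ of \Cref{defn: mu} are equal; writing $\mu$ for their common value, \Cref{prop: basic properties of critical exponents}\iref{item: mu related to crit exp} gives $\mu/q<\lce(\ideala)\le(\mu+1)/q$ and $\mu/q<\lce(\idealb)\le(\mu+1)/q$. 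Two numbers in a common half-open interval of length $1/q$ differ by strictly less than $1/q$, which is the assertion.

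To prove the displayed claim I would fix generators $g_1,\dots,g_m$ of $\ideala$ and use the hypothesis to write $g_i=g_i'+c_i$ with $g_i'\in\idealb$ and $c_i\in\frob{\idealm}{q}$. For $\vv{u}\in\NN^m$ with $\norm{\vv{u}}=k$ and $\binom{k}{\vv{u}}\not\equiv 0\bmod p$, expanding the generator $g^{\vv{u}}=\prod_i(g_i'+c_i)^{u_i}$ of $\frob{\ideala}{k}$ multinomially shows that every summand other than $(g')^{\vv{u}}:=(g_1')^{u_1}\cdots(g_m')^{u_m}$ carries some factor $c_i\in\frob{\idealm}{q}$, so $g^{\vv{u}}\equiv(g')^{\vv{u}}\bmod\frob{\idealm}{q}$. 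On the other hand, enlarging a generating set of $\idealb$ to one containing $g_1',\dots,g_m'$, \Cref{prop: generators for Frobenius powers} exhibits $(g')^{\vv{u}}$ as one of the distinguished generators of $\frob{\idealb}{k}$, precisely because $\norm{\vv{u}}=k$ and $\binom{k}{\vv{u}}\not\equiv 0\bmod p$. Hence $g^{\vv{u}}\in\frob{\idealb}{k}+\frob{\idealm}{q}$; since such elements generate $\frob{\ideala}{k}$, this gives $\frob{\ideala}{k}\subseteq\frob{\idealb}{k}+\frob{\idealm}{q}$, and the reverse containment follows by interchanging $\ideala$ and $\idealb$.

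The step I expect to require the most care is showing that the leading term $(g')^{\vv{u}}$ of the multinomial expansion lands in the Frobenius power $\frob{\idealb}{k}$ rather than merely in the ordinary power $\idealb^k$: this is exactly where the non-divisibility of $\binom{k}{\vv{u}}$ by $p$ — a property carried by the generators of $\frob{\ideala}{k}$ — is essential, together with the freedom to enlarge the generating set of $\idealb$ afforded by \Cref{prop: generators for Frobenius powers}. Everything else — the multinomial bookkeeping and the passage from the equality of $\mu$-functions to the interval estimate via \Cref{prop: basic properties of critical exponents} — is routine.
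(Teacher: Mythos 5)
Your proof is correct, but it follows a genuinely different route from the paper's. The paper deduces the lemma in two lines from subadditivity of critical exponents (\Cref{lem: bound on sum of crits}): applying it to $\idealb \subseteq \ideala + \frob{\idealm}{q}$ gives $\lce(\idealb) \le \lce(\ideala + \frob{\idealm}{q}) \le \lce(\ideala) + \lce(\frob{\idealm}{q}) = \lce(\ideala) + 1/q$, and then symmetrizes. That argument is short but rests on the principalization machinery of \Cref{s: principal principle}, since subadditivity for critical exponents is itself proved by passing to $F$-thresholds of generic linear combinations; moreover, as written it only yields the non-strict bound $\le 1/q$. Your argument is more elementary and self-contained: you prove the congruence $\frob{\ideala}{k}+\frob{\idealm}{q}=\frob{\idealb}{k}+\frob{\idealm}{q}$ directly from the generator description of integral Frobenius powers (\Cref{prop: generators for Frobenius powers}), using exactly the right observation that the ``leading term'' $(g')^{\vv{u}}$ of the multinomial expansion is a distinguished generator of $\frob{\idealb}{k}$ because the multinomial coefficient condition is inherited (enlarging the generating set of $\idealb$, or simply invoking $\frob{\ideal{g_1',\dots,g_m'}}{k}\subseteq\frob{\idealb}{k}$). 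This forces $\mu_{\ideala}^{\idealm}(q)=\mu_{\idealb}^{\idealm}(q)$, and \Cref{prop: basic properties of critical exponents}\iref{item: mu related to crit exp} places both least critical exponents in the same half-open interval $\left(\mu/q,(\mu+1)/q\right]$, which delivers the \emph{strict} inequality claimed in the statement — something the paper's subadditivity route does not quite give without an extra remark. The trade-off is that your proof is specific to the maximal-ideal target and to this particular perturbation by $\frob{\idealm}{q}$, whereas the paper's subadditivity corollary is a reusable general tool; but as a proof of this lemma, yours is complete and arguably sharper.
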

 
\begin{proof}
   The containment $\idealb \subseteq \ideala + \frob{\idealm}{q}$ and \Cref{lem: bound on sum of crits} show that
   \[\lce(\idealb) \le \lce\big(\ideala + \frob{\idealm}{q}\big) \leq \lce(\ideala) + \lce\big(\frob{\idealm}{q}\big) = \lce(\ideala) + \frac1q.\]
   Our claim follows from this, and the analogous statement when we reverse the roles of $\ideala$ and $\idealb$.
\end{proof}

We are now ready to prove \Cref{thm: sets of crits and fpts}.

\begin{proof}[Proof of \Cref{thm: sets of crits and fpts}]
   Given a proper ideal $\ideala$ of a regular $F$-finite domain $R$ of characteristic $p$, any maximal ideal $\idealm$ containing $\frob{\ideala}{\lce(\ideala)}$ realizes $\lce(\ideala)$, by the comments preceding \Cref{defn: realizes}, and $\lce(\ideala) = \crit^{\idealm}(\ideala) = \crit^{\idealm R_\idealm}(\ideala R_\idealm) = \crit^{\idealm \widehat{R}_\idealm}(\ideala \widehat{R}_\idealm)$, by \Cref{prop:  crit can be computed locally} and the fact that Frobenius powers commute with completion.
   Then by the Cohen Structure Theorem, $\mathcal{L}_n$ is the set of all least critical exponents of proper ideals in power series rings $\kk\llbracket  \vx \rrbracket = \kk\llbracket x_1, \ldots, x_r \rrbracket$, where $r \leq n$, and $\kk$ an $F$-finite field of characteristic $p$.

   Fix one such ring $\kk\llbracket \vx \rrbracket$; let $\idealm$ be its maximal ideal, and $\ideala$ an arbitrary proper ideal.
   Given $f \in \kk\llbracket \vx \rrbracket$ and $d \in \NN$, let $f_{\leq d}$ denote the truncation of $f$ up to degree $d$ (which can be considered in $\kk[\vx]$).
   Fix generators $f_1, \ldots, f_t$ of $\ideala$, and define $\ideala_{\leq d} = \ideal{(f_1)_{\leq d}, \ldots, (f_t)_{\leq d}} \subseteq \kk\llbracket \vx \rrbracket$.
   By the pigeonhole principle, $\ideala + \frob{\idealm}{q} = \ideala_{\leq d} + \frob{\idealm}{q}$ for $d \geq r(q-1)$.
   Hence the limit of the sequence $\seq{\lce(\ideala_{\leq d})}$ is $\lce(\ideala)$ by \Cref{lem: bound on difference of lces}.

   If $\ideal{\vx} = \ideal{x_1, \ldots, x_r}$ and $\ideala_d = \ideal{(f_1)_{\leq d}, \ldots, (f_t)_{\leq d}}$ are ideals of $\kk[x]$, we know that $\lce(\ideala_{\leq d}) = \crit^\idealm(\ideala_{\leq d}) = \crit^{\ideal{\vx}}(\ideala_d)$, again because Frobenius powers commute with completion.
   Since the sequence whose limit defines $\crit^{\ideal{\vx}}(\ideala_d)$ agrees with the sequence defining the critical exponent with respect to $\ideal{\vx}$ of the extension of $\ideala_d$ to the polynomial ring in $n$ variables over the algebraic closure of $\kk$, these critical exponents are equal.
   Hence we conclude that $\mathcal{L}_n \subseteq \overline{\mathcal{L}_n^\circ}$.

   Since $\overline{\mathcal{T}_n^\circ} = \mathcal{T}_n$ \cite[Theorem~1.2]{blickle+mustata+smith.F-thresholds_hyper}, and $\mathcal{L}_n^\circ = \mathcal{T}_n^\circ$ by \Cref{cor: crit(a) = max(c(g): g in a)}, we conclude that $\mathcal{L}_n \subseteq \overline{\mathcal{L}_n^\circ} = \overline{\mathcal{T}_n^\circ} = \mathcal{T}_n \subseteq \mathcal{L}_n$, so that equality holds throughout.
\end{proof}

\begin{rem} Note that the proof of \Cref{thm: sets of crits and fpts} yields the stronger statement \[ \mathcal{L}_n = \overline{\mathcal{L}_n^\circ} = \overline{\mathcal{T}_n^\circ} = \mathcal{T}_n. \]  
\end{rem}

\section{Behavior as $p \to \infty$} \label{s: behavior as p approaches infinity}

In this section, we view Frobenius powers and critical exponents from the point of view of reduction to prime characteristic,  
	with an eye towards establishing connections with certain invariants from birational geometry.  
To simplify our discussion, we only consider the local behavior of varieties in $\mathbb{A}^n_{\QQ}$ at a point, 
	which we may assume to be the origin.  
Algebraically, this means that we focus on nonzero ideals in the localization of $\QQ[\vx] = \QQ[x_1, \ldots, x_n]$ at $\idealm = \ideal{ x_1, \ldots, x_n }$.  

Recall that the \emph{multiplier ideal} of $\ideala \subseteq \QQ[\vx]_{\idealm}$ with respect to a nonnegative real parameter $t$ is the ideal in $\QQ[\vx]_{\idealm}$ given by 
\[ \mathcal{J}(\ideala^{t}) =  \{ h \in \QQ[\vx]_{\idealm} : \operatorname{div}(\pi^{\ast} h) \geq   t Z - K_{\pi} \},\] 
where $\pi: X \to \operatorname{Spec}( \QQ[\vx]_{\idealm})$ is a log resolution of $\ideala$ with relative canonical divisor $K_{\pi}$, and $\ideala \cdot \mathcal{O}_X = \mathcal{O}_X(-Z)$.   
We also recall that the \emph{log canonical threshold} of $\ideala$, denoted $\lct(\ideala)$, is the supremum of all 
	$t > 0$ such that $\mathcal{J}(\ideala^{t})$ is the unit ideal.
We refer the reader to \cite{blickle+lazarsfeld.intro_multiplier_ideals} for the basics of multiplier ideals and log canonical thresholds. 

\begin{rem}[Reduction modulo a prime integer]
	Let $\idealb \subseteq\QQ[\vx]_{\idealm}$ be an ideal generated by a set $\mathcal{B}$ consisting 
		of polynomials with integer coefficients.  
	Given a prime $p$, the \emph{reduction of $\idealb$ modulo} $p$ is the ideal $\idealb_p$ generated by the image of $\mathcal{B}$
		in $\FF_p[\vx]_{\idealm_p}$, where $\idealm_p$ is the homogeneous maximal ideal of $\FF_p[\vx]$.  
	Note that, while the reductions $\idealb_p$ may depend on the choice of $\mathcal{B}$, any two such choices will yield 
		identical reductions once $p \gg 0$. 
\end{rem}

Our interest in the multiplier ideals of an ideal  $\ideala \subseteq \QQ[\vx]_{\idealm}$ stems from the fact 
	that they are ``universal test ideals" of the reductions $\ideala_p$.  
Indeed, it was shown in \cite{smith.mult-ideal-is-univ-test-ideal,hara+yoshida.generalization_TC_multiplier_ideals} 
	that if $p \gg 0$, then $\tau(\ideala_p^{t})$ is contained in $\mathcal{J}(\ideala^{t})_p$ for all $t > 0$.  
Furthermore, if $t>0$ is fixed, then the ideals $\tau(\ideala_p^{t})$ and $\mathcal{J}(\ideala^{t})_p$ agree whenever 
	$p \gg 0$, with the precise bound on $p$ depending on $t$.  
    
We record an important corollary of this remarkable result below.

\begin{rem}\label{limit of F-thresholds: R}	
	If $\ideala \subseteq \idealb$ are nonzero proper ideals in $\QQ[\vx]_{\idealm}$, then 
		\[  \lim_{p \to \infty} \ftabp \] 
		exists, and equals $\min \{ t \in\RRpos : \mathcal{J}(\ideala^{t}) \subseteq \idealb \}$.   
	Indeed, if we denote this minimum by $\lambda$, then $\mathcal{J}(\ideala^{\lambda}) \subseteq \idealb$, 
		and we may choose an integer $N_{\lambda}$ such that 
		\[ \tau(\ideala_p^{\lambda}) = \mathcal{J}(\ideala^{\lambda})_p \subseteq \idealb_p \] 
		for all $p \geq N_{\lambda}$, which shows that the terms of the sequence $\ftabp$ are at most 
		$\lambda$ whenever $p \gg 0$.  
	Next, fix $0 < t < \lambda$, so that $\mathcal{J}(\ideala^{t}) \not \subseteq \idealb$, 
		which we restate in terms of quotients as $( \mathcal{J}(\ideala^{t}) + \idealb ) / \idealb \neq 0$.  
	It is well known (see, \eg \cite{hochster+huneke.tight-closure-in-char-0}) that the property of being nonzero is preserved 
		under reduction mod $p \gg 0$, and so there exists an integer $N_{t}$ such that 
		$\tau(\ideala^{t}_p) = \mathcal{J}(\ideala^{t})_p \not \subseteq \idealb_p$ for all $p \geq N_{t}$.  
	In particular, $\ftabp > t$ if $p \geq N_{t}$, which allows us to conclude that 
		\[ \lim_{p \to \infty} \ftabp = \lambda, \]
		as desired.   
	We also note the following special case of this result:  
	If $\idealb$ is the maximal ideal of $\QQ[\vx]_{\idealm}$, then $\fpt(\ideala_p) = \ftabp$, and so
		\[ \lim_{p \to \infty} \fpt(\ideala_p) = \lct(\ideala).\]  
\end{rem}

In the remainder of this section, we establish analogous results for Frobenius powers and critical exponents.

\begin{thm}\label{limits of Frobenius powers: T}
	Suppose $\ideala$ is an ideal of $\QQ[x_1, \ldots, x_n]_{\idealm}$.  
	If $0 < t < 1$, then 
		\[ \frob{\ideala_p}{t} = \tau(\ideala_p^{t}) = \mathcal{J}(\ideala^{t})_p \] 
		for all $p \gg 0$, with the precise bound on $p$ depending on $t$.
\end{thm}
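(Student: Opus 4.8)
The plan is to deduce the statement from two facts already in hand: the sandwich $\tau(\ideala^{\,t+\frac{m-1}{p-1}}) \subseteq \frob{\ideala}{t} \subseteq \tau(\ideala^{t})$ of \Cref{uniform relation between ideals: P}, applied to $\ideala_p$, and the comparison $\tau(\ideala_p^{s}) = \mathcal{J}(\ideala^{s})_p$ for $p \gg 0$ of \cite{smith.mult-ideal-is-univ-test-ideal,hara+yoshida.generalization_TC_multiplier_ideals} (with the bound depending on the fixed parameter $s$), as recalled just before \Cref{limit of F-thresholds: R}. The latter already yields $\tau(\ideala_p^{t}) = \mathcal{J}(\ideala^{t})_p$ for $p \gg 0$, so the real content is the equality $\frob{\ideala_p}{t} = \tau(\ideala_p^{t})$ for $p \gg 0$.

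First I would fix generators $f_1, \ldots, f_m$ of $\ideala$ with integer coefficients (one may assume $\ideala \neq 0$, else the assertion is trivial), so that for $p \gg 0$ the ideal $\ideala_p$ is a nonzero, $m$-generated ideal of $\FF_p[x_1, \ldots, x_n]_{\idealm_p}$. The naive move would be to apply the comparison theorem at $s = t + \frac{m-1}{p-1}$ to rewrite $\tau(\ideala_p^{\,t+\frac{m-1}{p-1}})$ as $\mathcal{J}(\ideala^{\,t+\frac{m-1}{p-1}})_p$; but the admissible bound on $p$ in that theorem depends on the parameter, and here the parameter drifts with $p$. This mismatch is the one genuine obstacle, and I would sidestep it by only ever invoking the comparison theorem at \emph{fixed} parameters.

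Concretely, since the jumping numbers of $\ideala$ form a discrete set (see \cite{blickle+lazarsfeld.intro_multiplier_ideals}), I would pick $\epsilon > 0$ with $t + \epsilon < 1$ and $\mathcal{J}(\ideala^{s}) = \mathcal{J}(\ideala^{t})$ for all $s \in [t, t+\epsilon]$. Applying the comparison theorem at the two fixed parameters $t$ and $t + \epsilon$ then produces a single integer $N$, depending on $\ideala$ and $t$, such that for all $p \geq N$ the ideal $\ideala_p$ is $m$-generated, $\frac{m-1}{p-1} \leq \epsilon$, and
\[ \tau(\ideala_p^{\,t+\epsilon}) = \mathcal{J}(\ideala^{\,t+\epsilon})_p = \mathcal{J}(\ideala^{t})_p = \tau(\ideala_p^{t}). \]
For such $p$, since $t \leq t + \frac{m-1}{p-1} \leq t + \epsilon$, monotonicity of test ideals gives $\tau(\ideala_p^{\,t+\epsilon}) \subseteq \tau(\ideala_p^{\,t+\frac{m-1}{p-1}})$; combining this with \Cref{uniform relation between ideals: P} produces the chain
\[ \tau(\ideala_p^{t}) = \tau(\ideala_p^{\,t+\epsilon}) \subseteq \tau\bigl(\ideala_p^{\,t+\frac{m-1}{p-1}}\bigr) \subseteq \frob{\ideala_p}{t} \subseteq \tau(\ideala_p^{t}), \]
forcing all the ideals in it to coincide, so that $\frob{\ideala_p}{t} = \tau(\ideala_p^{t}) = \mathcal{J}(\ideala^{t})_p$ for every $p \geq N$. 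The remaining work is purely bookkeeping: checking that the finitely many ``$p \gg 0$'' conditions can indeed be absorbed into one bound $N = N(\ideala, t)$, and that $\ideala_p$ stays nonzero and $m$-generated for $p \geq N$ --- both routine.
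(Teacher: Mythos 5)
Your proposal is correct and follows essentially the same route as the paper: both arguments fix $\epsilon>0$ small enough that $\mathcal{J}(\ideala^{t})=\mathcal{J}(\ideala^{t+\epsilon})$, invoke the test-ideal/multiplier-ideal comparison only at the two fixed parameters $t$ and $t+\epsilon$, and then squeeze $\frob{\ideala_p}{t}$ via \Cref{uniform relation between ideals: P} once $\frac{m-1}{p-1}<\epsilon$. Your explicit flagging of why the comparison theorem cannot be applied at the drifting parameter $t+\frac{m-1}{p-1}$ is exactly the subtlety the paper's proof is designed around.
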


\begin{proof}  
	Fix $0 < t < 1$.  
	Like test ideals and Frobenius powers, the multiplier ideals of $\ideala$ are locally constant to the right, and 
		so we may fix $0 < \epsilon \ll 1$ with $\mathcal{J}(\ideala^{t}) = \mathcal{J}(\ideala^{t + \epsilon})$.  
	Having fixed both $t$ and $\epsilon$,  it follows that 
		\begin{equation}\label{chain: e}
			\tau(\ideala^{t}_p) = \mathcal{J}(\ideala^{t})_p 
				= \mathcal{J}(\ideala^{t + \epsilon})_p = \tau(\ideala^{t + \epsilon}_p)
		\end{equation}
		for all $p \gg 0$.  
	However, if $\ideala$ (and hence each $\ideala_p$) can be generated by $m$ elements, then $\frac{m-1}{p-1} < \epsilon$ 
		for all $p \gg 0$, and so \Cref{uniform relation between ideals: P}  tells us that 
		\[ \tau(\ideala_p^{t + \epsilon}) \subseteq \frob{\ideala_p}{t} \subseteq \tau(\ideala_p^{t})\] 
		for all $p \gg 0$.  
	Finally, comparing this with \eqref{chain: e} we see that any two ideals appearing in either of these chains must agree.
\end{proof}

\begin{thm}\label{limits of critical exponents: T}
  	Consider nonzero proper ideals $\ideala \subseteq \idealb$ in $\QQ[x_1, \ldots, x_n]_{\idealm}$, and set 
		\[  \lambda = \lim_{p \to \infty} \ftabp = \min \{ t \in\RRpos : \mathcal{J}(\ideala^{t}) \subseteq \idealb \}. \]
	If $\lambda > 1$, then $\critabp =1$ for all $p \gg 0$.  
	Otherwise, 
		\[  \lim_{p \to \infty} \critabp = \lambda. \]
\end{thm}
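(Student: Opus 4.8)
The plan is to trap $\critabp$ between two explicit quantities that both converge to $\min\{1,\lambda\}$, and then read off both assertions of the theorem from this squeeze. First I would pass to $p \gg 0$, where reduction modulo $p$ is well behaved: the containment $\ideala \subseteq \idealb$ yields $\ideala_p \subseteq \idealb_p$, with $\ideala_p$ nonzero and $\idealb_p$ proper, so that both $\critabp$ and $\ftabp$ are defined; and \Cref{limit of F-thresholds: R} gives $\lim_{p\to\infty}\ftabp = \lambda$. I would also record two facts that hold for every such $p$: the comparison $\critabp \le \ftabp$ from \Cref{prop: basic properties of critical exponents}\iref{item: basic inequalities for crit exps}, and the bound $\critabp \le 1$, which follows since $\frob{\ideala_p}{1} = \ideala_p \subseteq \idealb_p$ and $\critabp = \min\{t \in \RRnn : \frob{\ideala_p}{t} \subseteq \idealb_p\}$ by \Cref{prop: basic properties of critical exponents}\iref{item: crit exp in terms of real powers}.

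Next I would establish the two-sided estimate: fixing $m$ so that $\ideala$, and hence $\ideala_p$ for $p \gg 0$, can be generated by $m$ elements,
\[ \min\!\Big(1,\ \ftabp - \tfrac{m-1}{p-1}\Big) \ \le\ \critabp \ \le\ \min\big(1,\ \ftabp\big) \qquad (p \gg 0). \]
The upper bound is immediate from the two facts just recorded. For the lower bound I would distinguish whether $\critabp = 1$: if so, there is nothing to prove, since $\min(1,\cdot)\le 1$; if not, then \Cref{uniform nu and mu comparison: C}, applied to the $m$-generated ideal $\ideala_p \subseteq \idealb_p$, gives $\critabp \ge \ftabp - \tfrac{m-1}{p-1}$, so the lower bound holds in this case too.

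Finally I would let $p \to \infty$. If $\lambda > 1$, then since $\ftabp \to \lambda$ and $\tfrac{m-1}{p-1}\to 0$, we have $\ftabp - \tfrac{m-1}{p-1} > 1$ for $p \gg 0$; the displayed estimate then collapses to $1 \le \critabp \le 1$, so $\critabp = 1$ eventually. If $\lambda \le 1$, then by continuity of $\min$ and $\lim\ftabp = \lambda$, both outer terms of the estimate converge to $\min(1,\lambda) = \lambda$, and the squeeze theorem gives $\lim_{p\to\infty}\critabp = \lambda$.

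The one delicate point is the case distinction in the lower bound, and it is exactly where the two conclusions of the theorem meet: \Cref{uniform nu and mu comparison: C} is vacuous precisely when $\critabp = 1$, which is the value forced when $\lambda > 1$, so the two cases are really two readings of a single inequality rather than separate arguments. I do not expect a genuine obstacle here: the substantive inputs --- the uniform gap between critical exponents and $F$-thresholds from \Cref{uniform nu and mu comparison: P}/\Cref{uniform nu and mu comparison: C}, and the limit $\lim\ftabp = \lambda$ from \Cref{limit of F-thresholds: R} --- are already in place, and what remains is bookkeeping: aligning the various ``$p \gg 0$'' clauses and remembering that $\critabp \le 1$ whenever $\ideala_p \subseteq \idealb_p$.
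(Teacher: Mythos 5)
Your proposal is correct and uses exactly the same ingredients as the paper's proof---the comparison $\critabp \ge \ftabp - \frac{m-1}{p-1}$ when $\critabp \neq 1$ from \Cref{uniform nu and mu comparison: C}, the trivial bounds $\critabp \le \min\{1,\ftabp\}$, and the limit $\lim_{p\to\infty}\ftabp = \lambda$ from \Cref{limit of F-thresholds: R}. The only difference is organizational: you package everything into a single two-sided estimate and case-split on whether $\critabp = 1$, whereas the paper splits into the cases $\lambda > 1$, $\lambda = 1$, $\lambda < 1$; the underlying argument is the same.
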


\begin{proof}  
	Suppose that $\ideala$ (and hence all $\ideala_p$) can be generated by $m$ elements.  
	If $\lambda > 1$, yet $\critabp \neq 1$ for infinitely many $p$, then \Cref{uniform nu and mu comparison: C} implies that 
		a subsequence of terms $\critabp$ converges to $\lambda$, which is impossible, since the containment 
		$\ideala \subseteq \idealb$ implies that each such term is at most $1$.  
	If $\lambda = 1$, then \Cref{uniform nu and mu comparison: C} tells us that either $\critabp = 1$, 
		or $\crit^{\idealb_p}(\ideala_p)$ lies between two sequences that converge to $\lambda = 1$, 
		and so $\lim_{p\to \infty} \crit^{\idealb_p}(\ideala_p)  = 1$.  
	Finally, if $\lambda < 1$, then $\critabp \leq \ftabp < 1$ for all $p \gg 0$, and we may once again invoke 
		\Cref{uniform nu and mu comparison: C} to see that $\lim_{p\to \infty} \crit^{\idealb_p}(\ideala_p)  = \lambda$.
\end{proof}

\begin{rem} 
	\Cref{limits of critical exponents: T} implies that if $\ideala \subseteq \idealb$ are nonzero proper ideals of $ \QQ[\vx]_{\idealm}$, then
		\[ \lim_{p \to \infty} \critabp=\min \Big\{ 1, \lim_{p\to\infty} \ftabp \Big\}.\] 
	Alternatively, this also follows from \Cref{limits of Frobenius powers: T}, and an argument modeled on 
		that given in \Cref{limit of F-thresholds: R}.  
	In any case, if $\idealb$ is the maximal ideal of $\QQ[\vx]_{\idealm}$, then $\lce(\ideala_p) = \critabp$, and so 
		\[ \lim_{p \to \infty} \lce(\ideala_p) = \min \{ 1, \lct(\ideala) \}.\] 
\end{rem}

The following is motivated by similar well-known questions surrounding the relationship between test ideals and multiplier ideals.

\begin{quest}  	Given a nonzero ideal $\ideala$ in $\QQ[\vx]_{\idealm}$ with $\lct(\ideala)$ at most $1$, do there exist infinitely many primes $p$ such that $\lce(\ideala_p) = \lct(\ideala)$?  More generally, does there exist an infinite set of primes $X$ for which $\ideala_p^{[t]} = \mathcal{J}(\ideala^t)_p$ for all $p \in X$ and $t$ in the open unit interval?
\end{quest}

\section{Computing Frobenius powers} \label{s: algorithm}

In this section, we outline an algorithm that can be implemented to compute arbitrary rational Frobenius powers 
	of ideals in polynomial rings.

Before proceeding, we point out that it is possible to effectively compute $[1/q]^\mathrm{th}$ powers in
	a polynomial ring over a finite field.
This has been implemented in the \emph{Macaulay2} \cite{M2} package \emph{TestIdeals} \cite{testideals} (see also its companion article \cite{TestIdealsPaper}), and relies on the following
	proposition, which describes $\frob{\ideala}{1/q}$ in terms of the generators of $\ideala$.

\begin{prop}[{\cite[Proposition~2.5]{blickle+mustata+smith.discr_rat_FPTs}}]\label{prop: roots in terms of generators}
	Suppose $R$ is free over $R^q$, with basis $\{e_1,\ldots,e_n\}$, and let $\ideala=\ideal{f_1,\ldots,f_s}$ be an ideal of $R$.
	Write each $f_i$ as an $R^q$-linear combination 	of the basis $\{e_1,\ldots,e_n\}$ as follows\textup:
		\[f_i=\sum_{j=1}^n a_{ij}^q e_j,\]
	 	where $a_{ij}\in R$.
	Then $\frob{\ideala}{1/q}=\ideal{a_{ij} |1\le i\le s, 1\le j\le n}$. 
\qed
\end{prop}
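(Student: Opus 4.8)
The plan is to establish the two containments $\ideal{a_{ij} : 1\le i\le s,\ 1\le j\le n} \subseteq \frob{\ideala}{1/q}$ and $\frob{\ideala}{1/q} \subseteq \ideal{a_{ij} : 1\le i\le s,\ 1\le j\le n}$ directly from the defining property of $[1/q]^{\mathrm{th}}$ powers recalled earlier, namely that $\frob{\ideala}{1/q}$ is the smallest ideal $\idealc$ of $R$ with $\ideala \subseteq \frob{\idealc}{q}$. Write $\idealc = \ideal{a_{ij} : 1\le i\le s,\ 1\le j\le n}$. The containment $\frob{\ideala}{1/q} \subseteq \idealc$ is immediate: each generator $f_i = \sum_j a_{ij}^q e_j$ lies in $\frob{\idealc}{q}$, since each $a_{ij}^q$ is one of the generators $\{a_{ij}^q\}$ of $\frob{\idealc}{q}$ and $\frob{\idealc}{q}$ is an ideal; hence $\ideala \subseteq \frob{\idealc}{q}$, and minimality of $\frob{\ideala}{1/q}$ gives the claim.

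The reverse containment is the crux, and is where the freeness hypothesis really enters. Write $\idealc' = \frob{\ideala}{1/q}$ and fix generators $\idealc' = \ideal{h_1, \ldots, h_t}$, so that $\frob{\idealc'}{q} = \ideal{h_1^q, \ldots, h_t^q}$. By definition $\ideala \subseteq \frob{\idealc'}{q}$, so each $f_i$ lies in $\frob{\idealc'}{q}$. I would then prove the general assertion: if $f = \sum_{j=1}^n b_j^q e_j$ with $b_j \in R$ and $f \in \frob{\idealc'}{q}$, then $b_j \in \idealc'$ for every $j$. To see this, write $f = \sum_k r_k h_k^q$ with $r_k \in R$, expand each $r_k = \sum_j c_{kj} e_j$ in the basis with $c_{kj} \in R^q$, say $c_{kj} = d_{kj}^q$, and regroup using that the $q$-th power map is additive and multiplicative in characteristic $p$:
\[ f = \sum_k \sum_j d_{kj}^q h_k^q e_j = \sum_j \Bigl( \sum_k d_{kj} h_k \Bigr)^{\!q} e_j . \]
Since $\{e_1, \ldots, e_n\}$ is a basis of $R$ over $R^q$, the $R^q$-coordinates of $f$ are unique, so $b_j^q = \bigl(\sum_k d_{kj} h_k\bigr)^{q}$; as $R$ is a domain, the Frobenius is injective, whence $b_j = \sum_k d_{kj} h_k \in \idealc'$. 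Applying this with $f = f_i$, whose basis coordinates are exactly the $a_{ij}$, shows $a_{ij} \in \idealc'$ for all $i,j$, i.e. $\idealc \subseteq \frob{\ideala}{1/q}$. Combining the two containments yields $\frob{\ideala}{1/q} = \ideal{a_{ij} : 1\le i\le s,\ 1\le j\le n}$.

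The main obstacle is the coefficient-comparison step in the second containment: it hinges on the uniqueness of the $R^q$-coordinates afforded by freeness, together with the injectivity of Frobenius on the domain $R$ to pass from $b_j^q = \bigl(\sum_k d_{kj} h_k\bigr)^{q}$ to $b_j = \sum_k d_{kj} h_k$; once these are in hand, the additivity of the $q$-th power map makes the regrouping automatic. It is also worth noting in passing that a representation $f_i = \sum_j a_{ij}^q e_j$ exists at all precisely because $R^q$-linear combinations of the $e_j$ exhaust $R$ and every element of $R^q$ is a $q$-th power, and that the $a_{ij}$ are unique by the same injectivity.
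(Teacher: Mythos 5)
Your proof is correct. The paper itself states this proposition without proof, citing \cite{blickle+mustata+smith.discr_rat_FPTs}, and your argument is essentially the standard one from that source: the easy containment follows from minimality since $\ideala \subseteq \frob{\idealc}{q}$ by construction, and the reverse containment is exactly the coefficient-extraction you describe, resting on the additivity of the $q$-th power map, the uniqueness of $R^q$-coordinates from freeness, and the injectivity of Frobenius on the (reduced) ring $R$.
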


Let $R$ be a polynomial ring over a finite field of characteristic~$p$.
      Let $\ideala$ be an ideal in $R$, and $t$ a nonnegative rational number.
      We now describe an algorithm for computing $\frob{\ideala}{t}$.

\begin{itemize}
\item If $t$ is of the form $k/p^b$ for some $b\in \NN$, then 
		\[\frob{\ideala}{t}=\frob{\big(\frob{\ideala}{k}\big)}{1/p^b},\] 
		which can be computed using \Cref{prop: roots in terms of generators}.
\item	Otherwise, $t$ can be written in the form $t=k/(p^b(p^c-1))$, for some $c>0$.
\item Set $u=k/(p^c-1)$, and note that, by \Cref{prop: basic properties of real powers}\iref{item: real power of real power}, 
	\[\frob{\ideala}{t}=\frob{\big(\frob{\ideala}{u}\big)}{1/p^b}.\]
\item Write $k = (p^c-1)l+r$, with $0 \le r < p^c-1$, and set $v= r/(p^c-1)$,
		so that $u=l+v$, and $\frob{\ideala}{u}=\frob{\ideala}{l}\frob{\ideala}{v}$ by \Cref{cor: skoda}.
\item For each $e\ge 1$, set
	\begin{equation*}
		v_e = \frac{r}{p^c}+\frac{r}{p^{2c}}+\cdots +\frac{r}{p^{ec}}+\frac{1}{p^{ec}}
		= \frac{rp^{(e-1)c}+rp^{(e-2)c}+\cdots+r+1}{p^{ec}}.
	\end{equation*}
	Note that $v_e\searrow v$, so that $\frob{\ideala}{v}=\frob{\ideala}{v_e}$ for $e\gg 0$.
	It remains to determine how large $e$ needs to be.
\item Let 
		\[\mu=\min\{e\ge 1: \frob{\ideala}{v_e}=\frob{\ideala}{v_{e+1}}\}.\]
	To compute $\mu$ efficiently, note that the ideals $ \frob{\ideala}{v_e}$ can be computed recursively.
	Indeed, let $n_e$ denote the numerator of $v_e$, that is, $n_e=rp^{(e-1)c}+rp^{(e-2)c}+\cdots+r+1$.
	Then
		\[\frob{\ideala}{v_{e+1}}=\frob{\big(\frob{\ideala}{n_{e+1}}\big)}{1/p^{(e+1)c}}=\frob{\big(\frob{\ideala}{n_e+r p^{ec}}\big)}{1/p^{(e+1)c}}.\]
	Since the supporting base $p$ digits of $n_e$ and $r p^{ec}$ are disjoint, 
		$\frob{\ideala}{n_e+r p^{ec}}=\frob{\ideala}{n_e}\frob{\ideala}{r p^{ec}}$ by \Cref{prop: basic properties or integer powers}\iref{item: products of powers}, so
		\begin{align*}
			\frob{\ideala}{v_{e+1}}&=\frob{\big(\frob{\ideala}{n_e}\frob{\ideala}{r p^{ec}}\big)}{1/p^{(e+1)c}}\\
			&= \frob{\Big(\frob{\big(\frob{\ideala}{n_e}\frob{\ideala}{r p^{ec}}\big)}{1/p^{ec}}\Big)}{1/p^c} &
				\text{by \Cref{lem: basic facts of roots}\iref{item: root of root}}\\
			&= \frob{\Big(\frob{\big(\frob{\ideala}{n_e}\frob{(\frob{\ideala}{r})}{p^{ec}}\big)}{1/p^{ec}}\Big)}{1/p^c}
				&\text{by \Cref{prop: basic properties or integer powers}\iref{item: powers of powers}}\\
			&= 
			\frob{
				\Big(
					\frob{\ideala}{r}
					\frob{
						\big(
							\frob{\ideala}{n_e}
						\big)
					}{1/p^{ec}}
				\Big)
			}{1/p^c}
			&\text{by \Cref{lem: qth roots vs. products}}\\
			&= 
			\frob{
				\Big(
					\frob{\ideala}{r}
					\frob{\ideala}{v_e}
				\Big)
			}{1/p^c}.
		\end{align*}
\item The above  recursion shows that $\frob{\ideala}{v_e}=\frob{\ideala}{v_\mu}$, for all 
			$e\ge \mu$, so $\frob{\ideala}{v}=\frob{\ideala}{v_\mu}$.
\item In conclusion, 
		\[\frob{\ideala}{t}=\frob{\big(\frob{\ideala}{l}\frob{\ideala}{v_\mu}\big)}{1/p^b}.\]
\end{itemize}

\noindent Pseudocode for the above algorithm is provided below.

\begin{algorithm}[h]
\DontPrintSemicolon
\Indp
\medskip
\SetKwInput{Input}{Input}\SetKwInput{Output}{Output}
 \Input{$\ideala$, an ideal in a polynomial ring over a finite field\\  \hskip 12.7mm $t$, a nonnegative rational number}
\medskip
 \Output{$\frob{\ideala}{t}$}
\medskip
\lIf{\emph{$t=k/p^b$, for some $k,b\in \NN$}}{\KwRet{$\frob{\big(\frob{\ideala}{k}\big)}{1/p^b}$}}
Write $t=\frac{k}{p^b(p^c-1)}$, with $b\in \NN$ and $k,c\in \NNpos$\;
Divide $k$ by $p^c-1$: $k = (p^c-1)l+r$, where $0 \le r < p^c-1$\;
$\idealc\leftarrow\frob{\big(\frob{\ideala}{r+1}\big)}{1/p^c}$\;
\Repeat{$\idealc\subseteq \idealb$}{
$\idealb\leftarrow \idealc$\;
$\idealc\leftarrow \frob{\big(\frob{\ideala}{r}\idealb\big)}{1/p^c}$\;
}
\KwRet{ $\frob{\big(\frob{\ideala}{l}\idealb\big)}{1/p^b}$}
\caption{Computing rational Frobenius powers}\label{alg}
\medskip
\end{algorithm}

\section*{Acknowledgements}
The authors thank Josep \`{A}lvarez Montaner, Anton Leykin, and Linquan Ma, whose questions helped shape the paper, especially \Cref{s: principal principle}.
Thanks also go to the anonymous referee, for the valuable comments, suggestions, and corrections. 
Finally, the first and third authors are grateful to the National Science Foundation for support of this research through Grants DMS-1600702 and DMS-1623035, respectively.
 
{\small
\bibliographystyle{amsalpha}
\bibliography{bibdatabase}
}	
\end{document}